\numberwithin{equation}{section}
\theoremstyle{plain}
\newtheorem{theorem}{Theorem}[section]
\newtheorem*{theorem*}{Theorem} 
\newtheorem*{maintheorem}{Main Theorem}
\newtheorem{lemma}{Lemma}[section]
\newtheorem*{lemma*}{Lemma} 
\newtheorem{corollary}{Corollary}[section]
\newtheorem*{corollary*}{Corollary} 
\newtheorem{consequence}{Consequence}[section]
\newtheorem*{consequence*}{Consequence} 
\newtheorem{proposition}{Proposition}[section]
\newtheorem*{proposition*}{Proposition} 
\newtheorem*{conjecture*}{Conjecture} 
\theoremstyle{definition}
\newtheorem{definition}{Definition}[section]
\newtheorem*{definition*}{Definition} 
\newtheorem{remark}{Remark}[section]
\newtheorem*{remark*}{Remark} 
\newtheorem*{remarks*}{Remarks} 
\newtheorem*{notations}{Notations} 
\newtheorem*{question*}{Question} 
\newtheorem*{questions*}{Questions} 
\newtheorem*{example*}{Example} 
\newtheorem*{examples*}{Examples} 
\newtheorem*{exercise*}{Exercise} 
\newtheorem*{exercises*}{Exercises} 
\theoremstyle{plain}
\newtheorem*{thm*}{Théorème} 
\newtheorem*{lemme*}{Lemme} 
\newtheorem*{cor*}{Corollaire} 
\newtheorem*{csq*}{Conséquence} 
\newtheorem*{prop*}{Proposition} 
\newtheorem*{conj*}{Conjecture} 
\theoremstyle{definition}
\newtheorem*{déf*}{Définition} 
\newtheorem*{rem*}{Remarque} 
\newtheorem*{rems*}{Remarques} 
\newtheorem*{ex*}{Exemple} 
\newtheorem*{exs*}{Exemples} 
\newtheorem*{exo*}{Exercice} 
\newtheorem*{exos*}{Exercices} 
\theoremstyle{remark}
\newcommand{\disp}{\displaystyle}
\renewcommand{\a}{\alpha}
\renewcommand{\b}{\beta}
\renewcommand{\c}{\gamma}
\newcommand{\C}{\Gamma}
\renewcommand{\d}{\delta}
\newcommand{\D}{\Delta}
\newcommand{\e}{\varepsilon}
\newcommand{\f}{\varphi}
\newcommand{\F}{\Phi}
\renewcommand{\k}{\kappa}
\renewcommand{\l}{\lambda}
\renewcommand{\L}{\Lambda}
\newcommand{\n}{\nu}
\newcommand{\om}{\omega}
\newcommand{\Om}{\Omega}
\newcommand{\p}{\psi}
\renewcommand{\P}{\Psi}
\renewcommand{\r}{\rho}
\newcommand{\s}{\sigma}
\renewcommand{\S}{\Sigma}
\renewcommand{\t}{\theta}
\newcommand{\Ups}{\Upsilon} 
\newcommand{\x}{\xi}
\newcommand{\cD}{\mathcal{D}}
\newcommand{\cF}{\mathcal{F}}
\newcommand{\cL}{\mathcal{L}}
\newcommand{\cT}{\mathcal{T}}
\newcommand{\cU}{\mathcal{U}}
\newcommand{\cV}{\mathcal{V}}
\newcommand{\gF}{\mathfrak{F}}
\newcommand{\gS}{\mathfrak{S}}
\newcommand{\imp}{\Longrightarrow} 
\newcommand{\con}{\Longleftarrow} 
\newcommand{\st}{~|~} 
\newcommand{\Oset}{\mbox{\large $\varnothing$}} 
\newcommand{\inc}{\subset} 
\newcommand{\setmin}{\raisebox{0.45ex}{\scriptsize $\smallsetminus$}} 
\renewcommand{\to}{\longrightarrow} 
\newcommand{\map}[5]{\begin{array}{rcl} #1 #2 & \to & #3 \\ #4 & \longmapsto & #5 \end{array}} 
\newcommand{\I}[1]{\mathfrak{I}_{\! {\mbox{\tiny $#1$}}}} 
\newcommand{\rest}[2]{#1_{\mathbf{|} #2}} 
\newcommand{\act}{\! \cdot \!} 
\renewcommand{\fam}[3]{\left( #1_{#2} \right)_{\! #2 \in #3}} 
\newcommand{\flowR}[2]{\left( #1^{#2} \right)_{\! #2 \in \RR}} 
\newcommand{\card}[1]{\mathrm{card} \left( #1 \right)} 
\newcommand{\NN}{\mathbf{N}} 
\newcommand{\ZZ}{\mathbf{Z}} 
\newcommand{\RR}{\mathbf{R}} 
\newcommand{\Rn}[1]{\mathbf{R}^{#1}} 
\newcommand{\Sn}[1]{\mathbf{S}^{#1}} 
\newcommand{\Hn}[1]{\mathbf{H}^{#1}} 
\newcommand{\Tn}[1]{\mathbf{T}^{#1}} 
\renewcommand{\leq}{\leqslant} 
\renewcommand{\geq}{\geqslant} 
\let\oldint\int
\renewcommand{\int}[4]{\oldint_{\! #1}^{#2} \!\!\! #3 \, \mathrm{d} #4} 
\newcommand{\LL}[1]{\mathrm{L}^{#1}} 
\newcommand{\goes}{\rightarrow} 
\renewcommand{\bar}{\overline} 
\renewcommand{\dim}[1]{\mathrm{dim} \left( #1 \right)} 
\let\olddet\det
\renewcommand{\det}[1]{\olddet{\! \left( #1 \right)}} 
\renewcommand{\O}{\mathrm{O}} 
\newcommand{\norm}[1]{\left\| #1 \right\|} 
\newcommand{\opnorm}[1]{\left| \! \left| \! \left| #1 \right| \! \right| \! \right|} 
\newcommand{\scal}[2]{\left\langle #1 , #2 \right\rangle} 
\newcommand{\vol}{\mathrm{vol}} 
\newcommand{\Tg}[3]{T_{\! #1} #2 \! \cdot \! #3} 
\newcommand{\pder}[2]{\frac{\partial #1}{\partial #2}} 
\newcommand{\ppder}[3]{\frac{\partial^{2} #1}{\partial #2 \partial #3}} 
\newcommand{\ed}[1]{\mathrm{d} #1} 
\newcommand{\Cl}[1]{\mathrm{C}^{#1}} 
\newcommand{\bC}{\partial C}
\newcommand{\bD}{\partial \cD}
\newcommand{\sF}{\mathsf{F}}
\newcommand{\iP}{\mathnormal{\P}}
\renewcommand{\vol}[2]{\mathrm{vol}_{#1} \! \left( #2 \right)}
\newcommand{\Vol}[2]{\mathrm{Vol}_{#1} \! \left( #2 \right)}
\renewcommand{\inc}{\subseteq}
\begin{document}

\title[]{Some smooth Finsler deformations \\ 
of hyperbolic surfaces}

\author{Bruno Colbois}
\address{Bruno Colbois, 
Universit\'{e} de Neuch\^{a}tel, 
Institut de math\'{e}matique, 
Rue \'{E}mile Argand 11, 
Case postale 158, 
CH--2009 Neuch\^{a}tel, 
Switzerland}
\email{bruno.colbois@unine.ch}

\author{Florence Newberger}
\address{Florence Newberger, 
Department of Mathematics, 
California State University, 
Long Beach, 
Long Beach, CA, 
USA}
\email{fnewberg@csulb.edu}

\author{Patrick Verovic}
\address{Patrick Verovic, 
UMR 5127 du CNRS \& Universit\'{e} de Savoie, 
Laboratoire de math\'{e}matique, 
Campus scientifique, 
73376 Le Bourget-du-Lac Cedex, 
France}
\email{verovic@univ-savoie.fr}

\date{\today}
\subjclass[2000]{Primary: 53C60, 53C24. Secondary: 53B40, 53C20}


\begin{abstract}
    
Given a closed hyperbolic Riemannian surface, the aim of the present paper is to describe 
an explicit construction of \emph{smooth} deformations of the hyperbolic metric into Finsler metrics 
that are \emph{not Riemannian} and whose properties are such that the classical Riemannian results 
about entropy rigidity, marked length spectrum rigidity and boundary rigidity all \emph{fail} 
to extend to the Finsler category. 

\end{abstract}

\maketitle

\bigskip
\bigskip


\section{Introduction}

In this paper, we construct Finsler metrics on hyperbolic surfaces, proving
that certain recent Riemannian rigidity results fail to extend to the Finsler
category. Recall that a Finsler metric $\cF$ on a manifold $M$ is a continuous
function $\cF : TM \to \RR$ such that $\cF(p , \cdot)$ is a norm on $T_{p}M$ for any $p \in M$.
If in addition $\cF$ is $\Cl{\infty}$ on $TM \setmin \{ 0 \}$ (the tangent bundle minus the zero section), 
then $\cF$ is said to be \emph{smooth}.
In that case, $\cF$ is called \emph{strongly convex} iff for any $(p , v) \in TM \setmin \{ 0 \}$, 
the symmetric bilinear form $\frac{\partial^{2} \cF^{2}}{\partial v^{2}}(p , v)$ on $T_{p}M$ 
is positive definite.

\smallskip

Note that a Riemannian metric $g$ on $M$ gives rise to an associated
Finsler metric $\cF$ defined by $\cF(p , v) = (g(p) \! \act \! (v , v))^{1 / 2}$.
We will then say that $\cF$ is Riemannian.

\medskip 

Perhaps the most significant Riemannian rigidity results are the minimal entropy
rigidity theorems of Katok for surfaces and of Besson, Courtois, and Gallot in higher dimensions, 
due to their many applications
(see for example the surveys \cite{BCG95} and \cite{Cro02}). We begin by stating
these two theorems and one of their consequences, relevant to our work (for a more
general version in higher dimensions, see~\cite{BCG96}).

\medskip 

Given a Finsler metric $F$ on a simply connected manifold $\tilde{M}$, 
the volume growth entropy of $F$, denoted by $h(F)$, 
is the asymptotic exponential growth rate of $F$-balls in $\tilde{M}$, \emph{i.e.}, 
\begin{equation*}
   h(F) := \limsup_{R \goes +\infty} \frac{1}{R} \log{(\Vol{F}{B_{F}(x , R)})} \in [0 , +\infty] 
\end{equation*}
for an arbitrary $x \in \tilde{M}$, where $B_{F}(x , R)$ is the open ball of radius $R$ in
$\tilde{M}$ about $x$ with respect to $F$, 
and $\textrm{Vol}_{F}$ denotes the Holmes-Thompson volume on $\tilde{M}$ associated with $F$
(see Section~\ref{sec:entropy} for the definition). 

\smallskip 

By extending this definition, if $\cF$ is a Finsler metric on a \emph{compact} manifold $M$ 
whose universal cover is $\tilde{M}$, the upper limit above is actually a limit, and 
the volume growth entropy $h(\cF)$ of $\cF$ is defined to be equal to that $h(F)$ 
of the lift $F$ to $\tilde{M}$ of the metric $\cF$. 

\bigskip 

\begin{theorem}[Katok, \cite{Kat82}] \label{thm:Katok} 
   Let $(S , g_{0})$ be a closed hyperbolic (Riemannian) surface, 
   and let $g$ be a Riemannian metric on $S$. 
   Then, denoting by $\mathrm{vol}$ the usual Riemannian volume, we have
   
   \begin{enumerate}
      \item $h(g_{0})^{\! 2} \, \vol{g_{0}}{S} \leq h(g)^{\! 2} \, \vol{g}{S}$, and 
      
      \smallskip 

      \item $h(g_{0})^{\! 2} \, \vol{g_{0}}{S} = h(g)^{\! 2} \, \vol{g}{S}$ 
      if and only if $g$ is hyperbolic. 
\end{enumerate}
\end{theorem}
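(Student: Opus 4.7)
The product $h(g)^{2} \vol{g}{S}$ is invariant under homothetic rescaling $g \mapsto c^{2} g$ (in dimension $2$, $\mathrm{vol}_{g}$ scales by $c^{2}$ while $h(g)$ scales by $1/c$), so I may normalize $\vol{g}{S} = \vol{g_{0}}{S}$ and thereby reduce (1) to the single inequality $h(g) \geq h(g_{0})$. A direct computation of the hyperbolic volume of balls in $\mathbb{H}^{2}$ gives $h(g_{0}) = 1$, and Gauss-Bonnet for $g_{0}$ yields $\vol{g_{0}}{S} = -2 \pi \chi(S)$. Applying Gauss-Bonnet to $g$ as well, the inequality becomes the purely intrinsic
\begin{equation*}
h(g)^{2} \, \vol{g}{S} \;\geq\; \oldint_{S} (-K_{g}) \, \mathrm{d}\mathrm{vol}_{g}, \qquad (\star)
\end{equation*}
in which the reference metric $g_{0}$ has disappeared.

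To prove $(\star)$ I would pass to the geodesic flow $\varphi^{g}$ on the unit tangent bundle $T^{1} S$ with normalized Liouville probability measure $\bar{\mu}_{L}$, and chain together three classical dynamical estimates: (a) Manning's comparison between the volume entropy $h(g)$ and the topological entropy $h_{\mathrm{top}}(\varphi^{g})$ of the geodesic flow; (b) the variational principle combined with Pesin's entropy formula, which expresses the entropy of $\bar{\mu}_{L}$ in terms of the averaged positive Lyapunov exponent $\oldint_{T^{1}S} \chi^{+} \, \mathrm{d}\bar{\mu}_{L}$; and (c) the scalar Jacobi-Riccati equation $\dot{u} + u^{2} + K_{g} = 0$ along geodesics, which, integrated against the flow-invariant measure $\bar{\mu}_{L}$, yields
\begin{equation*}
\oldint_{T^{1} S} u^{2} \, \mathrm{d}\bar{\mu}_{L} \;=\; -\oldint_{T^{1} S} K_{g} \, \mathrm{d}\bar{\mu}_{L} \;=\; \frac{\oldint_{S} (-K_{g}) \, \mathrm{d}\mathrm{vol}_{g}}{\vol{g}{S}}.
\end{equation*}
A Cauchy-Schwarz step linking $\chi^{+}$ to $u^{2}$ then combines (a), (b), (c) into $(\star)$.

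The equality case is handled by retracing these saturations: equality in Cauchy-Schwarz forces the Riccati variable $u$ to be $\bar{\mu}_{L}$-almost everywhere constant, which via the Riccati equation forces $K_{g}$ itself to be constant; the area normalization then gives $K_{g} \equiv -1$, so $g$ is hyperbolic.

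The main obstacle is threading the directions of these entropy comparisons correctly. Manning's theorem provides $h_{\mathrm{top}}(\varphi^{g}) \geq h(g)$, which is the ``wrong'' direction for transferring a lower bound from $h_{\mathrm{top}}$ back to $h(g)$. One therefore has to either reduce first to metrics without conjugate points, where the Freire-Ma\~{n}\'{e} equality $h_{\mathrm{top}}(\varphi^{g}) = h(g)$ holds, and then extend by a continuity/approximation argument, or else give a direct volume growth estimate for $\tilde{g}$-balls in the universal cover, counting deck-group orbits and bypassing $h_{\mathrm{top}}$ altogether. Making this step rigorous for an arbitrary smooth $g$, which may possess conjugate points and regions of positive curvature, is where the real technical work of the proof lies.
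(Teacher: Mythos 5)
The paper does not prove this theorem; it is quoted from Katok's article [Kat82] as background, so there is no internal proof to compare against. Evaluating your proposal on its own terms: the normalization step, the computation $h(g_{0}) = 1$, and the Gauss--Bonnet reformulation $(\star)$ are all correct. The problem is that your central chain (a)--(b)--(c) runs in the wrong direction and proves a \emph{different} theorem of Katok, not this one.

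Concretely: the variational principle gives $h_{\mathrm{top}}(\varphi^{g}) \geq h_{\bar{\mu}_{L}}(\varphi^{g})$, Pesin's formula gives $h_{\bar{\mu}_{L}} = \oldint \chi^{+}\, \mathrm{d}\bar{\mu}_{L} = \oldint u \, \mathrm{d}\bar{\mu}_{L}$, and Cauchy--Schwarz plus the integrated Riccati identity give $\oldint u \, \mathrm{d}\bar{\mu}_{L} \leq \bigl( \oldint u^{2}\, \mathrm{d}\bar{\mu}_{L} \bigr)^{1/2} = \bigl( \oldint_{S}(-K_{g})\,\mathrm{d}\mathrm{vol}_{g} / \vol{g}{S} \bigr)^{1/2}$. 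Assembled, these yield an \emph{upper} bound $h_{\bar{\mu}_{L}}(\varphi^{g})^{2}\, \vol{g}{S} \leq 2\pi|\chi(S)|$ on the \emph{Liouville-measure} entropy --- the other half of Katok's rigidity results --- and say nothing from below about $h_{\mathrm{top}}$, let alone about the volume entropy $h(g)$. Since $h(g) \leq h_{\mathrm{top}} = \sup_{\mu} h_{\mu}$ and the Liouville measure is precisely the measure whose entropy you have bounded \emph{above} by the target quantity, no rearrangement of (a)--(c) can produce the required lower bound $h(g)^{2}\vol{g}{S} \geq 2\pi|\chi(S)|$; indeed Katok's theorems show the two quantities sandwich $\sqrt{2\pi|\chi|/\mathrm{vol}}$ strictly from opposite sides unless the curvature is constant. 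The actual proof of (1) is the argument you relegate to a one-line fallback: uniformize, write $g = \rho^{2} g_{0}$ with $\oldint_{S}\rho^{2}\,\mathrm{d}\mathrm{vol}_{g_{0}} = \vol{g_{0}}{S}$, bound $d_{\tilde{g}}(x_{0},\gamma x_{0})$ by the $\tilde{g}$-length of the $\tilde{g}_{0}$-geodesic, i.e. by $\oldint_{0}^{T}\rho$, control this for most $\gamma \in \Gamma$ via Cauchy--Schwarz and equidistribution of long geodesic segments, and count orbit points; the conformal factor, which carries the whole comparison, never appears in your outline. The equality discussion has the same defect: forcing $u$ and hence $K_{g}$ constant is the equality analysis for the Liouville inequality, whereas for (2) one must show the conformal factor $\rho$ is constant. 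So the reduction to $(\star)$ stands, but the proof of $(\star)$ and of the rigidity statement both need to be replaced.
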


\medskip 

Later, Besson, Courtois and Gallot extended the first part (inequality) of this result 
to higher dimensions and obtained something different for the second part (rigidity):  

\begin{theorem}
[Besson--Courtois--Gallot, \cite{BCG95} and~\cite{BCG96}] \label{thm:BCG} 
   Let $(M , g_{0})$ be a closed $n$-dimensional Riemannian locally symmetric space of negative curvature 
   with $n \geq 3$, and let $(N , g)$ be a closed negatively curved Riemannian manifold homotopy 
   equivalent to $(M , g_{0})$. 
   Then, denoting by $\mathrm{vol}$ the usual Riemannian volume, we have
   
   \begin{enumerate}
      \item $h(g_{0})^{\! n} \, \vol{g_{0}}{M} \leq h(g)^{\! n} \, \vol{g}{N}$, and 
      
      \smallskip 

      \item $h(g_{0})^{\! n} \, \vol{g_{0}}{M} = h(g)^{\! n} \, \vol{g}{N}$ 
      if and only if $(N , g)$ is homothetic to $(M , g_{0})$.
\end{enumerate}
\end{theorem}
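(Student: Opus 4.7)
The plan is to follow the \emph{barycentre method} of Besson--Courtois--Gallot. The strategy is to construct, from the given homotopy equivalence and a parameter $c > h(g)$, a smooth \emph{natural map} $\F_{c} : N \to M$ whose pointwise Jacobian is controlled by $(c/h(g_{0}))^{n}$ and whose topological degree equals $\pm 1$; integrating the Jacobian bound then yields $\vol{g_{0}}{M} \leq (c/h(g_{0}))^{n} \vol{g}{N}$, and passing to $c \searrow h(g)$ gives (1). The rigidity statement~(2) falls out by tracking the case of equality in the Jacobian bound.

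Working equivariantly on the universal covers, I would first exploit negative curvature to equip both $\tilde{M}$ and $\tilde{N}$ with their geodesic boundaries and to extend the lift of the homotopy equivalence to an equivariant continuous boundary map $\f : \partial \tilde{N} \to \partial \tilde{M}$. For each $c > h(g)$ and each $y \in \tilde{N}$, a Patterson--Sullivan-type measure $\m_{y}^{c}$ on $\partial \tilde{N}$ is available; its pushforward $\f_{\ast} \m_{y}^{c}$ lives on $\partial \tilde{M}$. The lift $\tilde{\F}_{c}(y) \in \tilde{M}$ is then defined as the unique minimiser of the smooth strictly convex coercive function
\[
x \longmapsto \oldint_{\partial \tilde{M}} e^{c \, \BB_{\t}(x)} \, \mathrm{d}(\f_{\ast} \m_{y}^{c})(\t) ,
\]
where $\BB_{\t}$ denotes the Busemann function based at $\t \in \partial \tilde{M}$. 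Strict convexity comes from the negative curvature of $g_{0}$, and equivariance of $\f$ makes $\tilde{\F}_{c}$ descend to a smooth map $\F_{c} : N \to M$ homotopic to the original homotopy equivalence (hence of the same degree $\pm 1$).

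The crux of the argument is the pointwise Jacobian estimate
\[
|\Jac{\tilde{\F}_{c}}(y)| \leq \left( \frac{c}{h(g_{0})} \right)^{\!\! n} .
\]
Differentiating implicitly the critical-point equation defining $\tilde{\F}_{c}$ expresses $\mathrm{d} \tilde{\F}_{c}$ through two symmetric positive endomorphisms $H_{y}$ and $K_{y}$ of the target tangent space, built by averaging against $\f_{\ast} \m_{y}^{c}$ the rank-one tensors $\ed{\BB_{\t}} \otimes \ed{\BB_{\t}}$ and the Hessians of the $\BB_{\t}$ respectively. In the rank-one locally symmetric model $(\tilde{M} , g_{0})$, the explicit algebraic form of the Busemann Hessian yields an identity of the type $H_{y} + K_{y} = \mathrm{Id}$ with $\tr{H_{y}} = 1$, from which a sharp determinant--trace inequality on pairs of positive symmetric matrices (the hard linear-algebraic heart of~\cite{BCG95, BCG96}) produces the required bound, with equality characterising proportional matrices.

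Finally, the change-of-variable formula combined with $|\mathrm{deg}(\F_{c})| = 1$ gives
\[
\vol{g_{0}}{M} = |\mathrm{deg}(\F_{c})| \cdot \vol{g_{0}}{M} \leq \oldint_{N} |\Jac{\F_{c}}(y)| \, \mathrm{d} v_{g}(y) \leq \left( \frac{c}{h(g_{0})} \right)^{\!\! n} \vol{g}{N} ,
\]
and passing to $c \searrow h(g)$ establishes (1). For the equality case in (2), pointwise saturation of the matrix inequality forces $H_{y}$ to be a scalar multiple of the identity everywhere; one then shows that $\tilde{\F}$ must be a homothety onto the locally symmetric model, which yields the claimed identification of $(N , g)$ with $(M , g_{0})$ up to rescaling. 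The genuine obstacle throughout is the Jacobian estimate: producing the sharp matrix inequality for $(H_{y} , K_{y})$ and analysing its equality case is exactly the step that singles out rank-one locally symmetric spaces among all negatively curved Riemannian manifolds and encodes essentially all the geometric content of the theorem.
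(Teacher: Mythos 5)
This theorem is quoted in the paper as a known result of Besson--Courtois--Gallot; the paper gives no proof of its own, only the citations to \cite{BCG95} and \cite{BCG96}. Your outline is precisely the barycentre/natural-map argument of those references, so in that sense it is the ``same approach'' and is correct as a strategy. Two caveats worth recording: first, the barycentre $\tilde{\F}_{c}(y)$ is usually defined as the minimiser of $x \mapsto \oldint_{\partial \tilde{M}} \BB_{\t}(x)\, \mathrm{d}(\f_{*}\m_{y}^{c})(\t)$ rather than of an exponential average, although either normalisation can be made to work; second, the identity $H_{y} + K_{y} = \mathrm{Id}$ holds literally only for real hyperbolic targets --- for the complex, quaternionic and Cayley hyperbolic cases the Busemann Hessian involves the corresponding almost-complex structures, and both the determinant--trace inequality and its equality case are substantially harder there. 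Since your write-up explicitly defers that inequality and the rigidity analysis to \cite{BCG95, BCG96}, it is an accurate proof outline rather than a self-contained proof, which is exactly the status the statement has in this paper as well.
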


\bigskip 

An important corollary of these results is the boundary rigidity of negatively
curved symmetric spaces (see~\cite{Cro02} for a general discussion of boundary
rigidity). Let $(M , g_{0})$ denote a compact connected Riemannian manifold with
a non-empty boundary $\partial M$, and let $d_{g_{0}}$
denote the induced metric on $\partial M$ by the distance function on $M$ associated with $g_{0}$. 
Such a manifold is called boundary rigid if and only if for any compact connected Riemannian 
manifold $(N , g)$ with non-empty boundary $\partial N$, any \emph{metric} isometry 
$(\partial M , d_{g_{0}}) \to (\partial N , d_{g})$ extends to a \emph{smooth} 
isometry $(M ,g_{0}) \to (N , g)$, where $d_{g}$ is defined the same way as $d_{g_{0}}$. 

\begin{theorem}[\cite{Cro02}, Corollary~6.3] \label{thm:Riemann-boundary-rigid}
   Any bounded domain in a Riemannian symmetric space of negative curvature 
   has a closure that is boundary rigid.
\end{theorem}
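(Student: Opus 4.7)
The plan is to deduce the boundary rigidity statement from the volume--entropy rigidity theorems (Theorem~\ref{thm:Katok} in dimension~$2$, Theorem~\ref{thm:BCG} in dimension $\geq 3$). Let $D$ be a bounded domain with compact closure in a negatively curved Riemannian symmetric space $(X,g_{0})$, and suppose $(N,g)$ is a compact connected Riemannian manifold with boundary together with a metric isometry $\phi:(\partial D,d_{g_{0}})\to(\partial N,d_{g})$. First I would form the doubles $\hat{D}:=D\cup_{\mathrm{id}}D$ and $\hat{N}:=N\cup_{\phi}N$ by gluing two copies of each manifold along the boundary. Both are closed manifolds, and $\phi$ extends to a homeomorphism $\hat{D}\to\hat{N}$ so they are homotopy equivalent.

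Next I would verify that the doubled metrics $\hat{g}_{0}$ and $\hat{g}$ have equal volumes and equal (or at least comparable) volume--growth entropies. The volume equality comes from Santal\'o's formula, which expresses the Riemannian volume of a simple manifold with boundary as an integral of lengths of geodesics joining boundary points; since these lengths are determined by the boundary distance function, which is preserved by $\phi$, one obtains $\vol{g_{0}}{D}=\vol{g}{N}$ and hence $\vol{\hat{g}_{0}}{\hat{D}}=\vol{\hat{g}}{\hat{N}}$. For the entropies, one compares the exponential growth of geodesics in the universal covers: each long geodesic in a double decomposes as a concatenation of geodesic segments in alternating copies of $D$ or $N$, whose endpoints lie on the boundary and whose lengths are therefore controlled by $d_{g_{0}}$ (resp.\ $d_{g}$). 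A careful count should give $h(\hat{g}_{0})=h(\hat{g})$.

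With volumes and entropies matched, Theorem~\ref{thm:BCG} (or Theorem~\ref{thm:Katok} if $\dim X=2$) forces $(\hat{N},\hat{g})$ to be homothetic to $(\hat{D},\hat{g}_{0})$; the volume equality pins the homothety ratio at $1$, so we in fact get a global isometry $\Psi:\hat{D}\to\hat{N}$. Because the seam is intrinsically characterized by $\hat{g}_{0}$ (as the fixed set of the canonical reflection involution, where the metric fails to be $C^{1}$), $\Psi$ must send the seam of $\hat{D}$ to that of $\hat{N}$, and hence restricts to a smooth isometry $D\to N$ extending $\phi$.

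The main obstacle is regularity: the Riemannian metric on each double is only $C^{0}$ across the seam in general, whereas Theorem~\ref{thm:BCG} is stated in the smooth negatively curved category. One therefore needs either a careful smoothing argument (controlling the discrepancy in $h^{n}\cdot\mathrm{vol}$ as the smoothing parameter tends to $0$) or a version of entropy rigidity valid for $C^{0}$ or Lipschitz metrics. A secondary difficulty, appearing only in dimension $n\geq 3$, is verifying that $\hat{N}$ satisfies the negative curvature hypothesis required by Besson--Courtois--Gallot; this does not follow purely formally from the boundary data and typically has to be handled by exploiting the natural map built from the Patterson--Sullivan measures of $\hat{D}$.
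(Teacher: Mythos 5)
The paper does not prove this statement; it is quoted from Croke's survey. Judged on its own terms, your doubling strategy has a fatal flaw at the very first step. If $D$ is a ball (which is exactly the case the paper uses), its double $\hat{D}$ is a sphere: a closed \emph{simply connected} manifold. Such a manifold carries no negatively curved metric (a closed negatively curved manifold is aspherical), is not homotopy equivalent to any closed locally symmetric space of negative curvature, and has vanishing volume growth entropy for \emph{every} metric, so the quantity $h^{n}\,\mathrm{vol}$ is $0$ for both $\hat{g}_{0}$ and $\hat{g}$ and the equality case of Theorem~\ref{thm:Katok} or Theorem~\ref{thm:BCG} gives no information whatsoever. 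Even for domains with more topology, $(\hat{D},\hat{g}_{0})$ is not locally symmetric along the seam, so it cannot play the role of the model $(M,g_{0})$ in Theorem~\ref{thm:BCG}; the regularity and curvature issues you flag at the end are real but are secondary to this obstruction. The entropy--rigidity theorems are rigidity statements \emph{relative to a closed locally symmetric model with infinite fundamental group}, and doubling a bounded domain produces neither.

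The argument behind \cite{Cro02}, Corollary~6.3 avoids doubling. One first uses Santal\'o's formula (both $D$ and $N$ being simple/SGM) to get $\vol{g_{0}}{D}=\vol{g}{N}$ from the equality of boundary distance functions --- this part of your sketch is sound. One then \emph{replaces} $D$ inside the ambient symmetric space: glue $N$ to $X\setminus D$ along the boundary isometry $\phi$, obtaining a complete manifold that coincides with the symmetric space outside a compact set, and run the Besson--Courtois--Gallot natural-map/calibration machinery (built from Busemann functions and the measures on the visual boundary of $X$) on this glued object, or equivalently apply their volume comparison for compact subdomains directly. The ambient symmetric space, not a double, supplies the rigid model. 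If you want to salvage your approach, you would need to reorganize it around this gluing-into-the-complement idea rather than around forming closed doubles.
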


\medskip 

Another Riemannian rigidity result is related to the marked length spectrum. 
The marked length spectrum of a Finsler manifold $(M , \cF)$ is the map 
that assigns to each free homotopy class of $M$ the 
$\cF$-length of a shortest closed parameterized curve $[0 , 1] \to M$ 
(thus a $\cF$-geodesic since it is locally $\cF$-length minimizing) 
in that free homotopy class. 

\begin{theorem}[\cite{CroSha98}, Theorem 1.1. See also~\cite{Cro02}, Theorem~8.2] 
   Let $g_{0}$ be a negatively curved Riemannian metric on a closed manifold $M$.  
   Let $(g_{\l})_{\l \in (-\e , \e)}$ be a smooth variation of $g_{0}$ through Riemannian metrics 
   on $M$ 
   such that for each $\l \in (-\e , \e)$, the marked length spectrum of $g_{\l}$ 
   is the same as that of $g_{0}$. 
   Then $g_{\l}$ is isometric to $g_{0}$ for all $\l \in (-\e , \e)$.
\end{theorem}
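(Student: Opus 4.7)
The plan is to reduce the theorem to an infinitesimal rigidity statement and then integrate the resulting triviality by a Moser-type argument.

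First I would fix $\l_0 \in (-\e , \e)$ and differentiate the length identity. Because $g_{\l_0}$ is negatively curved, each nontrivial free homotopy class $\a$ of $M$ contains a unique closed $g_{\l_0}$-geodesic $\c_{\l_0}$, and by the implicit function theorem this extends to a smooth family of closed $g_\l$-geodesics $\c_\l$ for $\l$ near $\l_0$, parametrized proportionally to $g_\l$-arclength on $[0 , 1]$. A standard first-variation computation---integrating by parts the transverse term, which is killed by the geodesic equation for $\c_{\l_0}$ and by the fact that $\c_{\l_0}$ is closed---yields
\begin{equation*}
   \left. \frac{d}{d\l} \right|_{\l = \l_0} \! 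L_{g_\l}(\c_\l) \; = \; \frac{1}{2 L_{g_{\l_0}}(\c_{\l_0})} \int{0}{1}{\dot g_{\l_0} \bigl( \dot\c_{\l_0}(t) , \dot\c_{\l_0}(t) \bigr)}{t} ,
\end{equation*}
where $\dot g_{\l_0} := \left. \frac{d g_\l}{d\l} \right|_{\l = \l_0}$. The hypothesis that each length is constant in $\l$ forces the left-hand side to vanish, so the integral of the symmetric $2$-tensor $\dot g_{\l_0}$ along every closed $g_{\l_0}$-geodesic is zero.

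Next, I would invoke the analytic core of the argument: the theorem, due in this generality to Croke and Sharafutdinov, that on a closed negatively curved Riemannian manifold any symmetric $2$-tensor whose X-ray transform over closed geodesics vanishes identically must be a \emph{potential}, i.e., there exists a smooth vector field $X_{\l_0}$ on $M$ such that $\dot g_{\l_0} = \mathcal{L}_{X_{\l_0}} g_{\l_0}$. This is the main obstacle; its proof rests on Pestov-type energy identities on the unit tangent bundle and is already delicate in the Riemannian category. It is precisely this infinitesimal rigidity whose failure in the Finsler setting will be exploited in the remainder of the paper.

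Finally, applying the previous step at every $\l$ and fixing a gauge (for instance imposing that $X_\l$ be $g_\l$-divergence-free, which makes the equation $\dot g_\l = \mathcal{L}_{X_\l} g_\l$ elliptic with smooth dependence on $\l$) produces a smooth time-dependent vector field $\l \mapsto X_\l$ solving $\dot g_\l = \mathcal{L}_{X_\l} g_\l$ for every $\l \in (-\e , \e)$. Let $\f_\l : M \to M$ denote the smooth family of diffeomorphisms obtained by integrating the time-dependent vector field $-X_\l$ with initial condition $\f_0 = \mathrm{id}$. Then Moser's formula gives
\begin{equation*}
   \frac{d}{d\l} \bigl( \f_\l^* g_\l \bigr) \; = \; \f_\l^* \bigl( \dot g_\l - \mathcal{L}_{X_\l} g_\l \bigr) \; = \; 0 ,
\end{equation*}
so $\f_\l^* g_\l \equiv g_0$, which is the required isometry. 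The first and last steps are routine; the real substance is concentrated in the middle infinitesimal rigidity step.
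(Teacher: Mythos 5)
The paper states this theorem without proof --- it is quoted verbatim from Croke--Sharafutdinov \cite{CroSha98} as background motivation --- so there is no in-paper argument to compare against. Your sketch is precisely the standard proof of that reference and is correct: the first-variation formula and the Moser-type integration are routine, and you rightly locate the entire substance in the s-injectivity of the X-ray transform on symmetric $2$-tensors over closed geodesics (Pestov identities). The only cosmetic slip is the gauge remark at the end: once $\dot g_{\l}$ is known to be potential, the field $X_{\l}$ solving $\dot g_{\l} = \mathcal{L}_{X_{\l}} g_{\l}$ is already \emph{unique} (any two differ by a Killing field, and a closed negatively curved manifold has none), and smooth dependence on $\l$ follows from elliptic regularity for the operator $X \mapsto \delta_{g_{\l}} \mathcal{L}_{X} g_{\l}$; no divergence-free condition on $X_{\l}$ itself is needed or natural.
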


\bigskip 

We now state our main result.

\begin{maintheorem}
   Let $\cD$ denote an open ball in the two-dimensional hyperbolic space $(\Hn{2} , g_{0})$. 
   Then there exist $\e > 0$ and a continuous function 
   $\F : (-\e , \e) \times T\Hn{2} \to \RR$ that is $\Cl{\infty}$ 
   on $(-\e , \e) \times (T\Hn{2} \setmin \{ 0 \})$
   and such that for each $\l \in (-\e , \e)$, we have 
   
   \begin{enumerate}      
      \item $\sF_{\l}(\cdot) := \F(\l , \cdot)$ is a \emph{smooth} 
      strongly convex Finsler metric on $\Hn{2}$, 

      \smallskip 

      \item $\sF_{0}$ is associated with $g_{0}$, and $\sF_{\l}$ is \emph{not Riemannian} 
      whenever $\l \neq 0$, 

      \smallskip 

      \item $d_{\sF_{\l}} = d_{\sF_{0}}$, where $d_{\sF_{0}}$ and $d_{\sF_{\l}}$ 
      are the metrics induced on $\bD$ 
      by the distance functions on $\Hn{2}$ associated respectively with $\sF_{0}$ and $\sF_{\l}$, 

      \smallskip 

      \item every two points in $\cD$ can be joined by a geodesic of $\sF_{\l}$ 
      whose image is contained in $\cD$, 

      \smallskip 

      \item $\sF_{\l}(x , u) = \sF_{0}(x , u)$ for all $x \in \Hn{2} \setmin \cD$ and $u \in T_{x}\Hn{2}$, 

      \smallskip 

      \item $\sF_{\l}$ has no conjugate points. 
\end{enumerate}
\end{maintheorem}

\medskip 

It is important to point out that the greatest difficulty in the proof of this theorem 
is the \emph{smoothness} property we expect from our family of Finsler metrics. 
Indeed, how to get a Finsler metric on an open disc in $\Hn{2}$ 
that induces a given distance function on the boundary of that disk 
is a construction that is well known (see~\cite{Arc94}). 
However, we want here to construct a family of Finsler metrics on an open disk in $\Hn{2}$ 
that all induce on the boundary of that disk the same distance function as that 
induced by the hyperbolic Riemannian metric $g_{0}$ on $\Hn{2}$, and that extend 
to $g_{0}$ outside the disk in a \emph{smooth} way, this latter point being not something classical. 
Moreover, we expect the extended family $\fam{F}{\l}{(-\e , \e)}$ to be also 
$\emph{smooth}$ with respect to the real parameter $\l$ since this may be useful 
in studying the behaviour of some invariants associated with these Finsler metrics 
by differentiating them with respect to $\l$.  

\bigskip

As corollaries to our Main Theorem, we get that \emph{all} of the Riemannian
rigidity results stated above \emph{fail} to extend to the Finsler category for \emph{surfaces}. 
In particular, Katok's rigidity result about closed hyperbolic surfaces 
(see Theorem~\ref{thm:Katok}, point~(2)) does \emph{not} hold any longer for Finsler metrics. 
By the way, it is interesting to note that Besson, Courtois and Gallot 
conjectured in their paper~\cite{BCG95} (page~630) that Theorem~\ref{thm:BCG} 
should remain true in the Finslerian context. 

\medskip

\begin{corollary} \label{cor:boundary}
   Let $(S , g_{0})$ be a closed hyperbolic surface. 
   Then there exist a domain $\Om$ in $S$ with non-empty boundary $\partial \Om$,
   a number $\e > 0$ and a continuous function 
   $\iP : (-\e , \e) \times TS \to \RR$ that is $\Cl{\infty}$ on $(-\e , \e) \times (TS \setmin \{ 0 \})$
   and such that for each $\l \in (-\e , \e)$, we have 
   
   \begin{enumerate}      
      \item $\cF_{\l}(\cdot) := \iP(\l , \cdot)$ is a \emph{smooth} strongly convex Finsler metric on $S$, 

      \smallskip 

      \item $\cF_{0}$ is associated with $g_{0}$, and $\cF_{\l}$ is \emph{not Riemannian} 
      whenever $\l \neq 0$, 

      \smallskip 

      \item $d_{\cF_{\l}} = d_{\cF_{0}}$, where $d_{\cF_{0}}$ and $d_{\cF_{\l}}$ 
      are the metrics induced on $\partial \Om$ by the distance functions 
      on $S$ associated respectively with $\cF_{0}$ and $\cF_{\l}$.
   \end{enumerate}
\end{corollary}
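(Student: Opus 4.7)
The plan is to deduce Corollary~\ref{cor:boundary} directly from the Main Theorem by taking $\Om$ to be an embedded disc in $S$ whose preimage in the universal cover is a ball to which the Main Theorem applies. Let $\pi : (\Hn{2} , g_{0}) \to (S , g_{0})$ be the covering projection, fix a basepoint $\tilde{x}_{0} \in \Hn{2}$, and choose $r_{0} > 0$ small enough that $\pi$ is injective on a neighbourhood of the closed ball of radius $2 r_{0}$ around $\tilde{x}_{0}$ and that $4 r_{0} < \mathrm{inj}(S , g_{0})$. Set $\cD := B_{g_{0}}(\tilde{x}_{0} , r_{0})$ and $\Om := \pi(\cD)$, and apply the Main Theorem to this $\cD$ to produce $\e > 0$ and the family $\sF_{\l} = \F(\l , \cdot)$.

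I would then define $\iP$ on $(-\e , \e) \times TS$ by pushing $\sF_{\l}|_{\cD}$ forward via the diffeomorphism $\pi|_{\cD}$ on $\Om$ and declaring $\cF_{\l} = g_{0}$ on $S \setmin \Om$. Property~(5) of the Main Theorem (that $\sF_{\l} = g_{0}$ on $\Hn{2} \setmin \cD$) guarantees that the push-forward extends smoothly across $\bd \Om$ and agrees with $g_{0}$ there, so the glued function $\iP$ is continuous on $(-\e , \e) \times TS$ and $\Cl{\infty}$ on $(-\e , \e) \times (TS \setmin \{ 0 \})$; parts~(1) and~(2) of the corollary follow from the corresponding pointwise statements of the Main Theorem.

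The substantive step is the boundary-distance equality~(3). Let $\tilde{\cF}_{\l}$ denote the $\pi_{1}(S)$-invariant lift of $\cF_{\l}$ to $\Hn{2}$: it equals $\sF_{\l}$ on $\cD$, equals $g_{0}$ outside $\bigcup_{\c \in \pi_{1}(S)} \c \cD$, and equals the $\c$-push-forward of $\sF_{\l}$ on each translate $\c \cD$. For $p , q \in \bd \Om$ with unique lifts $\tilde{p} , \tilde{q} \in \bd \cD$, the covering-space formula gives
\[
 d_{\cF_{\l}}^{S}(p , q) \; = \; \inf_{\c \in \pi_{1}(S)} d_{\tilde{\cF}_{\l}}^{\Hn{2}}(\tilde{p} , \c \tilde{q}) .
\]
After possibly shrinking $\e$, the joint continuity of $\F$ at $\l = 0$ yields a constant $K \geq 1$ (arbitrarily close to $1$) such that $K^{-1} \sF_{0} \leq \sF_{\l} \leq K \sF_{0}$ on $T\Hn{2} \setmin \{ 0 \}$ for all $\l \in (-\e , \e)$, whence $d_{\tilde{\cF}_{\l}} \geq K^{-1} d_{g_{0}}$. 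The $\c = e$ term is bounded above by $2 K r_{0}$ (via the $g_{0}$-geodesic from $\tilde{p}$ to $\tilde{q}$, which lies in $\cD$), whereas for $\c \neq e$ it is bounded below by $K^{-1} (2 \, \mathrm{inj}(S , g_{0}) - 2 r_{0})$. Taking $K$ close enough to $1$ forces the infimum to be attained at $\c = e$, and any minimising $\tilde{\cF}_{\l}$-path is then of length at most $2 K r_{0}$ and stays in a neighbourhood of $\clos{\cD}$ disjoint from every other translate, so its length equals its $\sF_{\l}$-length. Main Theorem~(3) then yields
\[
 d_{\cF_{\l}}^{S}(p , q) \; = \; d_{\sF_{\l}}^{\Hn{2}}(\tilde{p} , \tilde{q}) \; = \; d_{\sF_{0}}^{\Hn{2}}(\tilde{p} , \tilde{q}) \; = \; d_{g_{0}}^{S}(p , q) \; = \; d_{\cF_{0}}^{S}(p , q) .
\]

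The hard part will be the quantitative interplay between $r_{0}$ and $\e$: one must choose $r_{0}$ small enough for the injectivity-radius argument to separate trivial from non-trivial homotopy classes of paths between boundary points, while controlling $K$ uniformly in $\l$. The cleanest resolution is to fix $r_{0}$ first, subject only to the embedding and $\mathrm{inj}(S , g_{0})$ requirements, apply the Main Theorem to obtain $\e$, and then shrink $\e$, using that $\sF_{\l} \to \sF_{0}$ uniformly on $T\clos{\cD} \setmin \{ 0 \}$ (after restricting to the $g_{0}$-unit sphere bundle and invoking homogeneity of order one in $v$) as $\l \to 0$.
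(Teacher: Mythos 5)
Your proposal is correct, and its construction of $\iP$ (push $\sF_{\l}|_{\cD}$ forward through $\pi$, glue with $g_{0}$ outside $\Om = \pi(\cD)$, using property~(5) of the Main Theorem for smoothness of the gluing) is exactly the paper's: the paper spreads $\sF_{\l}$ over $\Hn{2}$ by the deck transformations to get a $\C$-invariant metric and takes the quotient (Lemma~\ref{lem:quotient}), which is the same thing seen from upstairs. Where you genuinely diverge is in the verification of the boundary-distance equality~(3). The paper treats this as immediate from Main Theorem~(3); the honest justification in its framework is the surgery argument of Lemma~\ref{lem:distance}, which replaces each excursion of a minimizing path through a translate $\c(\cD)$ by a path of equal length for the other metric, and thereby shows $D_{F_{\l}}(\tilde{p} , \c \tilde{q}) = D_{F_{0}}(\tilde{p} , \c \tilde{q})$ for \emph{every} deck transformation $\c$, so that the two infima defining the distances on $S$ coincide term by term. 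That route needs no smallness of $\cD$ (only injectivity of $\rest{\pi}{\bar{\cD}}$) and no further shrinking of $\e$. Your route instead imposes $4 r_{0} < \mathrm{inj}(S , g_{0})$ and shrinks $\e$ to get the uniform comparison $K^{-1} \sF_{0} \leq \sF_{\l} \leq K \sF_{0}$, then uses a length bound to show the infimum over $\c$ is attained at $\c = e$ and that near-minimizing paths avoid all other translates. This is a valid, self-contained perturbative argument, but it is strictly less general: it would not give the analogous equality for distant pairs of boundary points of $\C(\cD)$, which is what Lemma~\ref{lem:distance} supplies and what the paper actually needs later for the marked-length-spectrum and entropy corollaries. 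For Corollary~\ref{cor:boundary} alone, both arguments work.
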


\medskip

\begin{corollary} \label{cor:spectrum}
   Let $(S , g_{0})$ be a closed hyperbolic surface. 
   Then there exist $\e > 0$ and a continuous function 
   $\iP : (-\e , \e) \times TS \to \RR$ that is $\Cl{\infty}$ on $(-\e , \e) \times (TS \setmin \{ 0 \})$
   and such that for each $\l \in (-\e , \e)$, we have 
   
   \begin{enumerate}      
      \item $\cF_{\l}(\cdot) := \iP(\l , \cdot)$ is a \emph{smooth} strongly convex Finsler metric on $S$, 

      \smallskip 

      \item $\cF_{0}$ is associated with $g_{0}$, and $\cF_{\l}$ is \emph{not Riemannian} 
      whenever $\l \neq 0$, 

      \smallskip 

      \item the marked length spectrum of $\cF_{\l}$ is equal to that of $\cF_{0}$.
   \end{enumerate}
\end{corollary}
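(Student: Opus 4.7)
\medskip

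\noindent\textbf{Proof proposal.} My plan is to transport the Main Theorem's construction from $\Hn{2}$ down to $S$ by a covering argument. Write $(S , g_{0}) = \Hn{2} / \C$ with $\C$ a cocompact torsion-free Fuchsian group, and let $\pi : \Hn{2} \to S$ be the covering projection. I first choose an open hyperbolic disk $\cD \inc \Hn{2}$ whose diameter is strictly less than the injectivity radius of $(S , g_{0})$, so that $\pi|_{\cD}$ is a diffeomorphism onto $\Om := \pi(\cD)$, the translates $\a \cdot \cD$ ($\a \in \C$) are pairwise disjoint, and $\cD$ is $g_{0}$-geodesically convex. Applying the Main Theorem to this $\cD$ produces the family $\sF_{\l}$ together with the function $\F$. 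Since $\sF_{\l}$ equals $g_{0}$ outside $\cD$ by property~(5) and is smooth on $T\Hn{2} \setmin \{0\}$, all derivatives of $\sF_{\l} - g_{0}$ vanish along $T(\bD) \setmin \{0\}$; the pushforward $(\pi|_{\cD})_{\ast}\sF_{\l}$ therefore extends by $g_{0}$ across $\bd \Om$ to a smooth strongly convex Finsler metric $\cF_{\l}$ on $S$, and the corresponding map $\iP$ is continuous on $(-\e , \e) \times TS$ and smooth on $(-\e , \e) \times (TS \setmin \{0\})$. Conditions~(1) and~(2) of the corollary are then immediate from their counterparts in the Main Theorem.

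To establish~(3), I lift $\cF_{\l}$ to a $\C$-invariant Finsler metric $\tilde{\cF}_{\l}$ on $\Hn{2}$ which equals $g_{0}$ outside the disjoint union $\bigsqcup_{\a \in \C} \a \cdot \cD$ and agrees with $\a_{\ast}\sF_{\l}$ on each translate $\a \cdot \cD$; by equivariance, properties~(3) and~(4) of the Main Theorem hold for every such translate. Fix a non-trivial free homotopy class represented by $\a_{0} \in \C$; any closed loop in the class lifts to an arc from some $p \in \Hn{2}$ to $\a_{0} \act p$. For the upper bound $\ell_{\cF_{\l}}([\a_{0}]) \leq \ell_{g_{0}}([\a_{0}])$, I lift the unique $g_{0}$-closed geodesic in the class to a $g_{0}$-geodesic $\tilde{c}$ from $p$ to $\a_{0} \act p$; for each translate $\a_{i} \cdot \cD$ crossed by $\tilde{c}$, with entry and exit points $a_{i} , b_{i} \in \bd(\a_{i} \cdot \cD)$, I replace the $g_{0}$-chord by the $\tilde{\cF}_{\l}$-geodesic from $a_{i}$ to $b_{i}$ lying in $\a_{i} \cdot \cD$ (property~(4)), whose $\tilde{\cF}_{\l}$-length equals $d_{g_{0}}(a_{i} , b_{i})$ by property~(3). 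The modified curve has the same endpoints, total $\tilde{\cF}_{\l}$-length equal to $L_{g_{0}}(\tilde{c}) = \ell_{g_{0}}([\a_{0}])$, and projects to a closed loop in $S$ in the class $[\a_{0}]$. For the lower bound, I observe that for any closed $\cF_{\l}$-loop $c$ in the class with lift $\tilde{c}$ from $p$ to $\a_{0} \act p$, each maximal subarc of $\tilde{c}$ inside some $\a_{i} \cdot \cD$ has $\tilde{\cF}_{\l}$-length at least $d_{\tilde{\cF}_{\l}}(a_{i} , b_{i}) = d_{g_{0}}(a_{i} , b_{i})$; replacing each such subarc by the $g_{0}$-chord inside the convex disk $\a_{i} \cdot \cD$ gives a piecewise $g_{0}$-curve from $p$ to $\a_{0} \act p$ of $g_{0}$-length at most $L_{\cF_{\l}}(c)$ and at least $d_{g_{0}}(p , \a_{0} \act p) \geq \ell_{g_{0}}([\a_{0}])$. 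Combining the two bounds yields the required equality.

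The bulk of the work is already packaged in the Main Theorem; the only genuine task in this corollary is the disk-by-disk surgery that converts the boundary-preservation property~(3) and the in-disk geodesic-connectedness property~(4) into equality of marked length spectra on the quotient. The only delicate technical point is the smooth gluing of $\sF_{\l}$ with $g_{0}$ across $\bD$ when descending to $S$, which is handled by the flat-infinite-order matching noted above and by the initial choice of a disk smaller than the injectivity radius.
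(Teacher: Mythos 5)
Your strategy is sound and is, at bottom, the same as the paper's: descend the Main Theorem's metrics to $S$ via the covering, and compare lengths by a disk-by-disk surgery that swaps the pieces of a curve inside each translate $\a\cdot\cD$ for pieces of controlled length, using properties (3), (4) and (5). The organization differs: the paper first proves a separate lemma (its Lemma~\ref{lem:distance}) asserting $D_{F_{\l}}(x,y)=D_{F_{0}}(x,y)$ for all $x,y$ \emph{outside} $\C(\cD)$, then takes a shortest closed curve in the class, shows its lift is a $D_{F_{\l}}$-minimizing geodesic between two points $x_{0}$ and $y_{0}$ outside $\C(\cD)$, and converts the problem into the equality $D_{F_{\l}}(x_{0},y_{0})=D_{F_{0}}(x_{0},y_{0})$; you instead argue directly with translation lengths, bounding $\ell_{\cF_{\l}}$ above and below by $\inf_{q}d_{g_{0}}(q,\a_{0}\act q)$. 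Your route is slightly more direct and avoids having to verify that the lift of a shortest closed curve is a minimizing geodesic.

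Two points need tightening. First, in both of your surgeries you tacitly assume the lift's endpoints $p$ and $\a_{0}\act p$ lie outside every translate $\a\cdot\cD$, so that each maximal subarc inside a translate has well-defined entry and exit points on its boundary; this must be arranged by choosing the basepoint of the loop outside $\Om=\pi(\cD)$, which is possible precisely because a loop contained in the disk $\Om$ would be null-homotopic — the paper makes this choice explicitly and proves its availability. Second, your upper bound replaces each chord by ``the $\tilde{\cF}_{\l}$-geodesic from $a_{i}$ to $b_{i}$ lying in $\a_{i}\cdot\cD$, whose length equals $d_{g_{0}}(a_{i},b_{i})$ by property~(3)''; property~(3) only controls the \emph{distance} induced on the boundary, not the length of a particular in-disk geodesic (a geodesic is locally, not globally, minimizing), and moreover the distance for the fully modified, $\C$-equivariant metric between boundary points of one translate is not \emph{a priori} the same as for the singly modified metric of the Main Theorem, since a competing path could cross other translates. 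The clean fix is to use a distance-minimizing curve and to first establish that the boundary distance is unchanged after modifying all translates — which is exactly what the paper's Lemma~\ref{lem:distance} (with its first-entry/last-exit bookkeeping) is for. With those two repairs your argument is complete; for the lower bound the issue is harmless, since the length of an in-disk subarc is already bounded below by the singly modified distance, which equals $d_{g_{0}}(a_{i},b_{i})$.
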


\medskip

\begin{corollary} \label{cor:entropy}
   Let $(S , g_{0})$ be a closed hyperbolic surface. 
   Then there exist $\e > 0$ and a continuous function 
   $\iP : (-\e , \e) \times TS \to \RR$ that is $\Cl{\infty}$ on $(-\e , \e) \times (TS \setmin \{ 0 \})$
   and such that for each $\l \in (-\e , \e)$, we have 
   
   \begin{enumerate}      
      \item $\cF_{\l}(\cdot) := \iP(\l , \cdot)$ is a \emph{smooth} strongly convex Finsler metric on $S$, 

      \smallskip 

      \item $\cF_{0}$ is associated with $g_{0}$, and $\cF_{\l}$ is \emph{not Riemannian} 
      whenever $\l \neq 0$, 

      \smallskip 

      \item $h(\cF_{\l})^{\! 2} \, \Vol{\cF_{\l} \!}{S} = h(\cF_{0})^{\! 2} \, \Vol{\cF_{0} \!}{S}$,
      where $\mathrm{Vol}$ denotes the Holmes-Thompson volume.
   \end{enumerate}
\end{corollary}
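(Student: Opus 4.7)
First, to construct the family $\cF_{\l}$ on $S$, I would choose the open disk $\cD$ of the Main Theorem with radius smaller than the injectivity radius of $(S, g_{0})$, so that $\cD$ embeds via the universal cover $\pi : \Hn{2} \to S$ onto a disk $\Om \inc S$. By property~(5) of the Main Theorem, $\sF_{\l} = g_{0}$ on $\Hn{2} \setmin \cD$, so the prescription $\cF_{\l} := \sF_{\l}$ on $\Om$ (transported via $\pi$) and $\cF_{\l} := g_{0}$ elsewhere glues to a smooth Finsler family on $S$. Properties~(1) and~(2) of the corollary follow directly from the corresponding properties of $\sF_{\l}$.

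Next, using property~(3) of the Main Theorem, I would establish the crucial distance identity: the lift $\tilde{\cF}_{\l}$ of $\cF_{\l}$ to $\Hn{2}$ satisfies $d_{\tilde{\cF}_{\l}}(x, y) = d_{g_{0}}(x, y)$ whenever $x, y$ both lie outside the $\Gamma$-orbit of $\cD$, where $\Gamma$ denotes the fundamental group of $S$. The upper bound is obtained by concatenating outside hyperbolic pieces with $\tilde{\cF}_{\l}$-minimizers between successive boundary entries and exits, each of length equal to the hyperbolic chord length by~(3); the lower bound uses the triangle inequality applied to any decomposition of a $\tilde{\cF}_{\l}$-path at its entries into translates of $\cD$, combined once more with the boundary identity~(3). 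From this distance identity the entropy equality $h(\cF_{\l}) = h(g_{0}) = 1$ follows: a $\tilde{\cF}_{\l}$-ball of radius $R$ in $\Hn{2}$ lies within bounded Hausdorff distance of the hyperbolic ball of the same radius, and the Holmes-Thompson volume forms of $\tilde{\cF}_{\l}$ and $g_{0}$ are uniformly comparable on the compact set $\clos{\cD}$ by continuity of the family in $(\l, v)$; hence both balls have Holmes-Thompson volume of order $e^{R}$.

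The main obstacle is the remaining identity $\Vol{\cF_{\l}}{S} = \Vol{g_{0}}{S}$, which by the gluing construction reduces to $\Vol{\sF_{\l}}{\cD} = \Vol{g_{0}}{\cD}$. This is not an automatic consequence of the distance identity, since two Finsler metrics on a domain with identical boundary distance functions generally have distinct Holmes-Thompson areas. I would address it via the Crofton-type integral formula for the two-dimensional Holmes-Thompson area: up to a universal constant, $\Vol{\sF_{\l}}{\cD}$ equals the integral of chord lengths over the space of oriented $\sF_{\l}$-geodesic chords of $\cD$, taken against the canonical symplectic measure on that space. By properties~(3), (4) and~(6) of the Main Theorem, the endpoint map yields a bijection between $\sF_{\l}$-chords and hyperbolic chords of $\cD$ preserving both endpoints and length, and a careful analysis exploiting smoothness of the family in $\l$ identifies the corresponding chord-space measures. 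Combining this volume equality with the entropy equality from the preceding paragraph yields $h(\cF_{\l})^{2} \, \Vol{\cF_{\l}}{S} = h(g_{0})^{2} \, \Vol{g_{0}}{S}$.
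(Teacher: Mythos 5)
Your proposal is correct in outline and follows the paper's scaffolding for everything except the volume step, where you take a genuinely different route. The construction of $\cF_{\l}$ by gluing through the covering is the paper's Lemma on quotients, and your distance identity $D_{F_{\l}}(x , y) = D_{F_{0}}(x , y)$ for $x , y \notin \C(\cD)$, proved by splicing boundary-to-boundary pieces, is exactly the paper's argument. Your entropy argument is a compressed version of the paper's: the paper derives the ball inclusions $B_{F_{0}}(x , R - c) \inc B_{F_{\l}}(x , R) \inc B_{F_{0}}(x , R + c)$ and then compares volumes of $F_{0}$-balls measured in the two metrics; your observation that uniform comparability of the two volume densities on a fundamental domain already suffices for the exponential growth rate is a legitimate shortcut (the paper's finer volume comparison is not needed for the entropy, only for the literal statement of its Lemma on volumes). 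The real divergence is at $\Vol{\sF_{\l} \!}{\cD} = \Vol{F_{0} \!}{\cD}$: the paper black-boxes Ivanov's filling-area theorem and applies it twice, once in each direction, using properties (4) and (6) of the Main Theorem to secure the geodesic-uniqueness hypotheses; you instead sketch a direct Crofton/Santal\'{o} computation identifying the Holmes--Thompson area with an integral of chord lengths against the symplectic measure on the space of oriented chords. That computation is essentially the mechanism inside Ivanov's proof, so your route is more self-contained but shifts a substantial burden onto you: you must justify the Santal\'{o} decomposition of Liouville measure for a non-reversible, non-minimizing-a-priori Finsler disk, verify that each $\sF_{\l}$-chord has length $d_{g_{0}}$ of its endpoints (this is built into Arcostanzo's construction, not into property (3) alone), and identify the chord-space measure with the mixed second derivative of the boundary distance. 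On that last point, one correction: what identifies the two chord-space measures is the coincidence of the boundary distance functions (the generating function of the scattering relation), not smoothness of the family in $\l$, which plays no role there. None of this is a gap in the logic, but if you want to avoid re-proving a theorem, citing Ivanov as the paper does is the economical choice.
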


\bigskip
\bigskip
\bigskip


\section{Proofs of the Corollaries} \label{sec:entropy}

In this section, we will explain how the Main Theorem implies Corollaries~\ref{cor:boundary},
\ref{cor:spectrum} and \ref{cor:entropy}. 

Let $(\Hn{2} , g_{0})$ be the two-dimensional hyperbolic space and let $F_{0}$ 
be the (smooth strongly convex) Finsler metric associated with $g_{0}$.  
Let $\C$ be a discrete cocompact subgroup of $g_{0}$-isometries acting properly discontinuously on
$\Hn{2}$ without fixed points. 
Then $S = \Hn{2} / \C$ is a closed surface endowed with the quotient hyperbolic metric $g_{0}$ 
and the projection map $\pi : \Hn{2} \to S$ is a Riemannian covering.
Finally, let $\cD$ be an open ball in $(\Hn{2} , g_{0})$ such that $\rest{\pi}{\bar{\cD}}$ is injective, 
where $\bar{\cD}$ stands for the closure of $\cD$ in $(\Hn{2} , g_{0})$. 

\smallskip 

\begin{notations}  
For any Finsler metric $\cF$ on $S$, we will denote by $F$ the lift of $\cF$ to $\Hn{2}$ 
and by $D_{F}$ the distance function on $\Hn{2}$ associated with $F$. 
The $\cF$-length (respectively $F$-length) of curves will be denoted by $L_{\cF}$ 
(respectively $L_{F}$), and the induced metric on $\bD$ by $D_{F}$ will be denoted by $d_{F}$. 
For $x \in \Hn{2}$ and $R > 0$, let $B_{F}(x , R)$ denote the open ball about $x$ in
$\Hn{2}$ of radius $R$ with respect to $D_{F}$.
In addition, given $x , y \in \Hn{2}$, any $D_{F}$-distance minimizing 
geodesic $[0 , 1] \to \Hn{2}$ connecting $x$
to $y$ will be denoted by $[x , y]_{F}$ (it is to be noticed that $\cF$ is complete since 
$S$ is closed, and thus $F$ is complete too).
\end{notations}

\bigskip

Corollary~\ref{cor:boundary} will be a straightforward consequence of the following lemma:  

\begin{lemma} \label{lem:quotient}
   There exist $\e > 0$ and a continuous function 
   $\iP : (-\e , \e) \times TS \to \RR$ that is $\Cl{\infty}$ on $(-\e , \e) \times (TS \setmin \{ 0 \})$
   and such that for each $\l \in (-\e , \e)$, we have 
   
   \begin{enumerate}      
      \item $\cF_{\l}(\cdot) := \iP(\l , \cdot)$ is a smooth strongly convex Finsler metric on $S$, 

      \smallskip 

      \item $\cF_{0}$ is associated with $g_{0}$, and $\cF_{\l}$ is not Riemannian whenever $\l \neq 0$, 

      \smallskip 

      \item $d_{F_{\l}} = d_{F_{0}}$, 

      \smallskip 

      \item every two points in $\cD$ can be joined by a geodesic of $F_{\l}$ 
      whose image is contained in $\cD$, 

      \smallskip 

      \item $F_{\l}(x , u) = F_{0}(x , u)$ for all $x \in \Hn{2} \setmin \C(\cD)$ and $u \in T_{x}\Hn{2}$, 

      \smallskip 

      \item $F_{\l}$ has no conjugate points.      
   \end{enumerate}
\end{lemma}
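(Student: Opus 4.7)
The plan is to propagate the construction given by the Main Theorem across $\Hn{2}$ in a $\Gamma$-equivariant way and then descend to $S$. Applying the Main Theorem to the ball $\cD$ produces $\e > 0$ and $\F : (-\e,\e) \times T\Hn{2} \to \RR$ satisfying its six conclusions; set $\sF_\l := \F(\l,\cdot)$. Since $\rest{\pi}{\bar\cD}$ is injective and $\Gamma$ acts freely, the translates $\{\gamma(\bar\cD) : \gamma \in \Gamma\}$ are pairwise disjoint: any point in $\gamma(\bar\cD) \cap \bar\cD$ for $\gamma \neq e$ would yield two distinct points of $\bar\cD$ (namely $y$ and $\gamma^{-1}(y)$, distinct by freeness) with the same $\pi$-image.

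Define $F_\l$ on $\Hn{2}$ by $F_\l = F_0$ on $\Hn{2} \setmin \Gamma(\cD)$ and $F_\l(x,u) := \sF_\l\bigl(\gamma^{-1}(x),\,(d\gamma^{-1})_x u\bigr)$ whenever $x \in \gamma(\cD)$, with $\gamma$ uniquely determined by disjointness. This $F_\l$ is manifestly $\Gamma$-invariant. The only delicate point in checking smoothness is the $C^\infty$-gluing across each $\gamma(\partial\cD)$. By property (5) of the Main Theorem, $\sF_\l - F_0$ vanishes identically on the open set $\Hn{2}\setmin\bar\cD$; combined with the smoothness of $\sF_\l$ on all of $T\Hn{2} \setmin \{0\}$ from property (1), this forces every partial-derivative jet of $\sF_\l - F_0$ to vanish along $\partial\cD$. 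The two sides therefore glue $C^\infty$ across $\partial\cD$, hence by $\Gamma$-equivariance across every $\gamma(\partial\cD)$; proper discontinuity of $\Gamma$ ensures that locally only one translate of $\bar\cD$ intervenes, so this local analysis is sufficient. The resulting $\Gamma$-invariant $F_\l$ descends to the desired $\cF_\l$ on $S$, with joint $C^\infty$-regularity in $(\l, x, u)$ inherited from $\F$.

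Items (1), (2), (4), (5), and (6) of the lemma transport directly from the corresponding items of the Main Theorem. Strong convexity is fiberwise: at each point, $F_\l$ locally coincides with either $F_0$ (strongly convex as a Riemannian norm) or with a $\Gamma$-translate of $\sF_\l$ (strongly convex by Main Theorem (1)); the Riemannian/non-Riemannian dichotomy (2) is inherited on $\cD$; (5) holds by construction; (4) because $F_\l = \sF_\l$ on $\cD$; and (6) because absence of conjugate points is a local property of the geodesic spray, $F_0$ has no conjugate points on $\Hn{2}$, and $\sF_\l$ has none by Main Theorem (6).

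The main obstacle is item (3), $d_{F_\l} = d_{F_0}$ on $\bD$. The inequality $\leq$ is direct: a distance-realizing $\sF_\l$-path in $\bar\cD$ between $x,y \in \bD$, provided by properties (3)--(4) of the Main Theorem, has $F_\l$-length equal to its $\sF_\l$-length since $F_\l = \sF_\l$ there, and by property (3) this length equals $D_{F_0}(x,y)$. For the reverse inequality, take any $F_\l$-rectifiable curve $\alpha$ from $x$ to $y$ and decompose its domain into maximal sub-intervals on which $\alpha$ lies inside some $\gamma(\cD)$ or in the complement $\Hn{2} \setmin \Gamma(\cD)$. On an inside piece lying in $\gamma(\cD)$ with endpoints $p, q \in \gamma(\partial\cD)$, the $F_\l$-length equals the $\sF_\l$-length of the $\gamma^{-1}$-translated curve in $\cD$, hence is at least $D_{\sF_\l}(\gamma^{-1}p, \gamma^{-1}q)$; by property (3) of the Main Theorem combined with the fact that $\gamma$ is an $F_0$-isometry, this equals $D_{F_0}(p,q)$. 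On complementary pieces $F_\l = F_0$ identically. Summing these lower bounds and applying the triangle inequality for $D_{F_0}$ along the chain of decomposition endpoints yields $L_{F_\l}(\alpha) \geq D_{F_0}(x,y)$, completing the argument.
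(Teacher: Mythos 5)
Your proposal follows essentially the same route as the paper's: apply the Main Theorem on $\cD$, spread the resulting metrics over $\Hn{2}$ by the deck transformations, glue with $F_{0}$ outside $\C(\cD)$, and pass to the quotient. Three comparative remarks. For the gluing, the paper sidesteps your jet-vanishing argument by choosing an open set $U \supset \bar{\cD}$ on which $\pi$ is still injective; since $\sF_{\l} = F_{0}$ on $U \setmin \cD$ by property~(5) of the Main Theorem, the two pieces of the definition agree on the overlap of the \emph{open} sets $\C(U)$ and $\Hn{2} \setmin \C(\bar{\cD})$, so smoothness (jointly in $\l$) is immediate; your version also works but is more roundabout, and it does correctly use pairwise disjointness of the translates and local finiteness. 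Your surgery argument for item~(3) is actually \emph{more} careful than the paper's proof of the lemma, which simply declares that points~(3)--(6) transport ``automatically''; the decomposition you describe is essentially the one the paper carries out later (Lemma~\ref{lem:distance}), and it is genuinely needed here since $F_{\l}$ differs from $\sF_{\l}$ on the translates $\c(\cD)$, $\c \neq \mathrm{id}$. The one flaw is your justification of item~(6): absence of conjugate points is \emph{not} a local property of the geodesic spray --- conjugate points are detected by Jacobi fields along arbitrarily long geodesic segments, which for $F_{\l}$ may traverse many translates $\c(\cD)$, so local coincidence with metrics having no conjugate points does not by itself rule them out. Your conclusion agrees with the paper (which asserts the transport of~(6) without argument), but the reason you give would not survive scrutiny; a correct argument would have to be global, e.g.\ a perturbation argument on the $\C$-invariant metric $F_{\l}$ in the spirit of the paper's Proposition~\ref{prop:properties}.
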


\medskip

\begin{proof}~\\ 
\indent Roughly speaking, the proof will first consist in spreading out 
the family $\fam{\sF}{\l}{(-\e , \e)}$ 
obtained in the Main Theorem all over $\Hn{2}$ by the deck transformations 
of the universal covering $\pi : \Hn{2} \to S$. 
Then, we will get a new family $\fam{F}{\l}{(-\e , \e)}$ of Finsler metrics on $\Hn{2}$ 
that are invariant under the group $\C$, which will make it possible to consider their 
quotients on the surface $S$. 
This will finally give rise to a family $\fam{\cF}{\l}{(-\e , \e)}$ of Finsler metrics on $S$, 
each of them being equal to the Riemannian hyperbolic metric $g_{0}$ 
outside a small topological disk.   

\smallskip 

Let $\e > 0$ and $\F : (-\e , \e) \times T\Hn{2} \to \RR$ as given in the Main Theorem. 

\smallskip 

Since $\rest{\pi}{\bar{\cD}}$ is injective, $\bar{\cD}$ is compact 
and $\pi : \Hn{2} \to S$ is a covering map,
there exists an open set $U$ in $\Hn{2}$ such that $\bar{\cD} \inc U$ and 
$\rest{\pi}{U}$ is still injective.
So, as $\F(\l , (x , u)) = \sF_{0}(x , u)$ for all $\l \in (-\e , \e)$, 
$x \in \Hn{2} \setmin \cD$ and $u \in T_{x}\Hn{2}$
by property~(5) in the Main Theorem,
we can define the new continuous function $\widehat{\F} : (-\e , \e) \times T\Hn{2} \to \RR$
by setting 
$\widehat{\F}(\l , (\c(x) , \Tg{x}{\c}{u})) = \sF_{\l}(x , u)$ for all $\c \in \C$ if $x \in U$ 
and $u \in T_{x}\Hn{2}$, and 
$\widehat{\F}(\l , (x , u)) = \sF_{0}(x , u)$ if $x \in \Hn{2} \setmin \C(U)$ and $u \in T_{x}\Hn{2}$. 

\smallskip 

Note that this definition makes sense since we have $\c(U) \cap U = \Oset$ 
for all $\c \in \C$ with $\c \neq \I{\Hn{2}}$ 
(indeed, if $\c \in \C$ and $x_{0} \in U$ are such that $\c(x_{0}) \in U$, 
then necessarily $\c(x_{0}) = x_{0}$
by injectivity of $\rest{\pi}{U}$, and hence $\c = \I{\Hn{2}}$ since $\C$ has no fixed points).

\smallskip 

This function $\widehat{\F}$ is then $\Cl{\infty}$ on $(-\e , \e) \times (T\Hn{2} \setmin \{ 0 \})$
and for each $\l \in (-\e , \e)$ satisfies 
  
\begin{enumerate}[$(i)$] 
   \item $\widehat{\F}(\l , \cdot)$ is a smooth strongly convex Finsler metric on $\Hn{2}$, 
   
   \item $\widehat{\F}(0 , \cdot)$ is associated with $g_{0}$, and $\widehat{\F}(\l , \cdot)$ 
   is not Riemannian whenever $\l \neq 0$. 
\end{enumerate}

\smallskip 

Since $\widehat{\F}(\l , (\c(x) , \Tg{x}{\c}{u})) = \widehat{\F}(\l , (x , u))$ 
for all $\l \in (-\e , \e)$, $(x , u) \in T\Hn{2}$ 
and $\c \in \C$ by construction,
the quotient function $\iP : (-\e , \e) \times TS \to \RR$ 
given by $\iP(\l , T\pi(x , u)) = \widehat{\F}(\l , (x , u))$ is well defined and immediately 
satisfies points~(1) and~(2) of Lemma~\ref{lem:quotient} thanks to points $(i)$ and $(ii)$ above. 
Furthermore, points~(3) to~(6) in the Main Theorem automatically yield points~(3) to~(6) 
of Lemma~\ref{lem:quotient}. 
\end{proof}

\bigskip

\begin{proof}[Proof of Corollary~\ref{cor:boundary}]~\\ 
\indent Choose $\Om = \pi(\cD)$ and apply Lemma~\ref{lem:quotient}.
\end{proof}

\bigskip

On the other hand, keeping in mind that $d_{F_{0}}$ denotes the induced metric on $\bD$ 
by the distance function $D_{F_{0}}$ on $\Hn{2}$ associated with $F_{0}$, 
Corollaries~\ref{cor:spectrum} and \ref{cor:entropy} will need the following lemma:

\begin{lemma} \label{lem:distance}
   Let $\cF$ be a Finsler metric on $S$ such that we have
   
   \begin{enumerate}
      \item $d_{F} = d_{F_{0}}$, and 

      \smallskip 

      \item $F(x , u) = F_{0}(x , u)$ for all $x \in \Hn{2} \setmin \C(\cD)$ and $u \in T_{x}\Hn{2}$. 
   \end{enumerate}
   Then $D_{F}(x , y) = D_{F_{0}}(x , y)$ for all $x , y \in \Hn{2} \setmin \C(\cD)$. 
\end{lemma}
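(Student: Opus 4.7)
The plan is to prove $D_F(x,y) = D_{F_0}(x,y)$ by showing both inequalities, in each case by decomposing curves into portions lying outside $\C(\cD)$, where $F = F_0$, and portions lying inside a single translate $\c(\cD)$, where the hypothesis constrains the endpoint distances on $\bd{\c(\cD)}$. Two preliminary facts will feed into both bounds. First, since $F$ and $F_0$ are $\C$-invariant, $\C$ acts by isometries for both $D_F$ and $D_{F_0}$, so the identity $d_F = d_{F_0}$ on $\bD$ upgrades to $D_F(p,q) = D_{F_0}(p,q)$ for every pair $p,q$ on the boundary $\bd{\c(\cD)}$ of any translate. Second, as established in the proof of Lemma~\ref{lem:quotient}, the open sets $\c(\cD)$ are pairwise disjoint as $\c$ ranges over $\C$.

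For the inequality $D_F(x,y) \leq D_{F_0}(x,y)$, I would take the hyperbolic geodesic $\s_0 = [x,y]_{F_0}$. Each translate $\c(\cD)$ is a hyperbolic ball, hence $F_0$-convex, so $\s_0$ crosses it in a single subarc if at all; compactness of $\s_0$ together with proper discontinuity of $\C$ then forces only finitely many crossings. On each subarc $\s_0|_{[a_k,b_k]}$ lying in $\c_k(\cD)$, I would replace $\s_0|_{[a_k,b_k]}$ by an $F$-minimizing geodesic joining $\s_0(a_k)$ to $\s_0(b_k)$ (available by completeness of $F$), whose $F$-length equals $D_F(\s_0(a_k),\s_0(b_k)) = D_{F_0}(\s_0(a_k),\s_0(b_k)) = L_{F_0}(\s_0|_{[a_k,b_k]})$ by the preliminary observation. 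The resulting concatenation is a curve from $x$ to $y$ with $F$-length equal to $L_{F_0}(\s_0) = D_{F_0}(x,y)$.

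For the inequality $D_F(x,y) \geq D_{F_0}(x,y)$, I would take an arbitrary piecewise $\Cl{1}$ curve $\s : [0,1] \to \Hn{2}$ from $x$ to $y$ and decompose the open set $A := \s^{-1}(\C(\cD)) \inc (0,1)$ into its at most countably many connected components $(a_k, b_k)$. Connectedness and pairwise disjointness of the translates place each $\s((a_k,b_k))$ in a unique $\c_k(\cD)$, with $\s(a_k), \s(b_k) \in \bd{\c_k(\cD)}$. Replacing each such $\s|_{(a_k,b_k)}$ by the hyperbolic segment joining $\s(a_k)$ and $\s(b_k)$ produces a new curve $\tilde\s$ from $x$ to $y$; exploiting $F \circ \s = F_0 \circ \s$ on the complement $B = [0,1]\setmin A$ and the bound $L_F(\s|_{[a_k,b_k]}) \geq D_F(\s(a_k),\s(b_k)) = D_{F_0}(\s(a_k),\s(b_k))$, I would derive
\begin{equation*}
    L_{F_0}(\tilde\s) \;=\; \oldint_B F_0(\s(t),\dot\s(t))\, \mathrm{d}t \,+\, \sum_k D_{F_0}(\s(a_k),\s(b_k)) \;\leq\; L_F(\s).
\end{equation*}
Since $L_{F_0}(\tilde\s) \geq D_{F_0}(x,y)$, taking the infimum over $\s$ yields the lower bound.

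The main obstacle is this second direction. In the upper bound, the finiteness of the number of excursions of $\s_0$ through $\C(\cD)$ is automatic from $F_0$-convexity and proper discontinuity, but for a general competitor $\s$ one must handle a possibly countable family of excursion intervals, verify that the surgered curve $\tilde\s$ remains continuous (which follows from the uniform estimate $D_{F_0}(\s(a_k),\s(b_k)) \leq L_F(\s|_{[a_k,b_k]}) \to 0$ at any accumulation point of the components) and rectifiable, and justify the displayed length identity when the component set is infinite. The summability of $\sum_k L_F(\s|_{[a_k,b_k]})$ coming from finiteness of $L_F(\s)$ is exactly what makes the surgery well-behaved.
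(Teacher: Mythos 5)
Your proposal is correct, and your upper bound $D_F(x,y)\le D_{F_0}(x,y)$ is essentially the paper's argument run in one direction: surgery on a minimizing geodesic, replacing each excursion through a component of $\C(\cD)$ by a minimizer for the other metric, with the endpoint identity $D_F(p,q)=D_{F_0}(p,q)$ for $p,q\in\partial\c(\cD)$ supplied by hypothesis~(1) and $\C$-invariance. Where you genuinely diverge is the lower bound. The paper treats both directions symmetrically: it takes the $F$-minimizing geodesic $[x,y]_F$ (which exists by completeness), notes that its compact image meets only finitely many components $C_1,\dots,C_k$ of $\C(\cD)$, and — since $[x,y]_F$ may enter and leave a given $C_i$ many times, $C_i$ not being $F$-convex — marks only the \emph{first} entry $e_i$ and \emph{last} exit $o_i$ of each component; the subsegment $[e_i,o_i]_F$, whatever it does in between, has $F$-length $D_F(e_i,o_i)=d_F(e_i,o_i)=d_{F_0}(e_i,o_i)$, so it can be swapped for $[e_i,o_i]_{F_0}$ wholesale. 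This first-entry/last-exit device is exactly what lets the paper avoid the countable surgery you perform on an arbitrary competitor $\s$: your route forces you to handle possibly infinitely many excursion intervals, prove continuity of the surgered curve at accumulation points, and justify the length identity for a curve assembled from countably many pieces over a complement $B$ that may have empty interior. You correctly identify these as the obstacles and your ingredients (uniform smallness of $D_{F_0}(\s(a_k),\s(b_k))\le L_F(\s|_{[a_k,b_k]})$, summability from $L_F(\s)<\infty$) do suffice, but the verification of $L_{F_0}(\tilde\s)\le\int_B F_0(\s,\dot\s)\,\mathrm{d}t+\sum_k D_{F_0}(\s(a_k),\s(b_k))$ via partition sums is real work that the paper's version makes unnecessary. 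What your route buys in exchange is that it never invokes existence of $F$-minimizers between $x$ and $y$, so it would survive in settings where completeness is not available; in the present compact-quotient setting that generality is free, and adopting the paper's first-entry/last-exit trick would let you symmetrize and shorten your proof considerably.
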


\medskip

\begin{proof}~\\ 
\indent Fix $x , y \in \Hn{2} \setmin \C(\cD)$
and let us consider a $F$-distance minimizing geodesic $[x , y]_{F}$ connecting $x$ to $y$. 

We will construct a curve $\s$ also connecting $x$ to $y$ such that
$L_{F_{0}}(\s) = L_{F}([x , y]_{F})$, and hence conclude that
$D_{F_{0}}(x , y) \leq L_{F_{0}}(\s) = L_{F}([x , y]_{F}) = D_{F}(x , y)$. 
A similar argument will give the reverse inequality, proving the lemma.

\smallskip

As the image of $[x , y]_{F}$ is compact,
it intersects only a finite number $N \geq 0$ of connected components 
of the open set $\C(\cD) = \pi^{-1}(\pi(\cD))$ in $\Hn{2}$. 

\smallskip

If $N = 0$, \emph{i.e.}, if $[x , y]_{F}$ does not enter $\C(\cD)$, 
then hypothesis~(2) implies $L_{F}([x , y]_{F}) = L_{F_{0}}([x , y]_{F})$, 
hence we may take $\s = [x , y]_{F}$ and obtain the result.

\smallskip

Suppose $N \geq 1$, and let $C_{1}$ be the first connected component of $\C(\cD)$ that
$[x , y]_{F}$ enters. Let $e_{1}$ be the point at which $[x , y]_{F}$ enters
$C_{1}$ the first time, and $o_{1}$ be the point at which $[x , y]_{F}$ leaves $C_{1}$ the last time.   
Similarly let $C_{2}$ be the first connected component of
$\C(\cD)$ met by the geodesic $[o_{1} , y]_{F}$, define $e_{2}$ to be the point at
which $[o_{1} , y]_{F}$ enters $C_{2}$ the first time, and $o_{2}$ to be the point at which
$[o_{1} , y]_{F}$ leaves $C_{2}$ the last time. 
Continuing in this fashion, we define finite sequences 
$(C_{i})_{1 \leq i \leq k}$, $(e_{i})_{1 \leq i \leq k}$ 
and $(o_{i})_{1 \leq i \leq k}$ by induction, where $k \in \{ 1, \ldots , N \}$ is such that  
the image of $[o_{k} , y]_{F}$ does not intersect $\C(\cD)$ 
(see Figure~\ref{fig:lem-distance}).

\begin{figure}[h] 


\begin{pspicture}(15,5) 
   \pscircle[linewidth=0.02](4,2.5){2} 
   \pscircle[linewidth=0.02](11,2.5){2} 
   \uput{0.15}[270](4,0.5){$C_{1}$} 
   \uput{0.15}[270](11,0.5){$C_{2}$} 
   \psdots[dotstyle=*,dotscale=0.9](1,3.7)(14,3)(2.51,3.823)(5.495,1.187)(9.52,1.177)(12.985,2.49) 
   \uput{0.15}[180](1,3.7){$x$} 
   \uput{0.15}[0](14,3){$y$} 
   \uput{0.1}[120](2.5,3.823){$e_{1}$} 
   \uput{0.15}[5](5.5,1.177){$o_{1}$} 
   \uput{0.15}[260](9.5,1.177){$e_{2}$} 
   \uput{0.1}[315](13,2.5){$o_{2}$} 
   \pscurve[linewidth=0.04](1,3.7)(2.5,3.823)(9,4.3)(10.1,3.5)(10,3.05)
   (5.4,2)(5.5,1.177)(9.5,1.177)(13,2.5)(14,3) 
   \psline[linewidth=0.08]{c->}(7.56,4.505)(7.7,4.505) 
   \uput{0.17}[270](7.6,4.5){$[x , y]_{F}$} 
   \pscurve[linewidth=0.04,linestyle=dotted](2.5,3.823)(3,2.5)(4,1.5)(5.5,1.177) 
   \psline[linewidth=0.06]{c->}(3.338,2.05)(3.43,1.95) 
   \uput{0.15}[45](3.4,2){$[e_{1} , o_{1}]_{F_{0}}$} 
   \pscurve[linewidth=0.04,linestyle=dotted](9.5,1.177)(10,1.5)(11.5,3)(13,2.5) 
   \psline[linewidth=0.06]{c->}(11.58,3.005)(11.72,3.01) 
   \uput{0.17}[100](12,3){$[e_{2} , o_{2}]_{F_{0}}$} 
\end{pspicture}  
   \caption{\label{fig:lem-distance} Proof of Lemma~\ref{lem:distance}} 
\end{figure}

\smallskip 

Now we have
$$
L_{F}([x , y]_{F}) = L_{F}([x , e_{1}]_{F}) + L_{F}([e_{1} , o_{1}]_{F}) 
+ L_{F}([o_{1} , e_{2}]_{F}) + \cdots + L_{F}([e_{k} , o_{k}]_{F}) + L_{F}([o_{k} , y]_{F}).
$$

\smallskip 

The $F$-length and the $F_{0}$-length of each segment lying entirely
outside $\C(\cD)$ are already equal by hypothesis~(2). 
In particular, we have
$$
L_{F}([o_{i} , e_{i + 1}]_{F}) = L_{F_{0}}([o_{i} , e_{i + 1}]_{F})
$$
for each $1 \leq i \leq k - 1$ (in case $k \geq 2$), as well as
$$
L_{F}([x , e_{1}]_{F}) = L_{F_{0}}([x , e_{1}]_{F})
$$
and
$$
L_{F}([o_{k} , y]_{F}) = L_{F_{0}}([o_{k} , y]_{F}).
$$

\smallskip 

Furthermore, for each $i \in \{ 1 , \ldots , k \}$, 
observing that $e_{i}$ and $o_{i}$ lie in $\partial C_{i}$, 
we have
\begin{equation*}
   L_{F_{0}}([e_{i} , o_{i}]_{F_{0}}) = D_{F_{0}}(e_{i} , o_{i}) 
   = 
   d_{F_{0}}(e_{i} , o_{i}) = d_{F}(e_{i} , o_{i}) 
   = 
   D_{F}(e_{i} , o_{i}) = L_{F}([e_{i} , o_{i}]_{F}) 
\end{equation*}
by hypothesis~(1). 

\smallskip 

Thus, if $\s$ is the curve
$$
\s = [x , e_{1}]_{F} \# [e_{1} , o_{1}]_{F_{0}} \# [o_{1} , e_{2}]_{F} 
\# \cdots \#
[e_{k - 1} , o_{k - 1}]_{F_{0}} \# [o_{k - 1} , e_{k}]_{F} \# [e_{k} , o_{k}]_{F_{0}} \# [o_{k} , y]_{F},
$$
where $\#$ is the concatenation operator,
then $L_{F_{0}}(\s) = L_{F}([x , y]_{\cF})$, as desired.

\smallskip

The same argument reversing the roles of ${F_{0}}$ and $F$ shows that
$D_{F}(x , y) \leq D_{F_{0}}(x , y)$, 
and hence we have $D_{F}(x , y) = D_{F_{0}}(x , y)$, completing the proof.
\end{proof}

\bigskip

\begin{proof}[Proof of Corollary~\ref{cor:spectrum}]~\\ 
\indent Let $\e > 0$ and $\iP : (-\e , \e) \times TS \to \RR$ as given by Lemma~\ref{lem:quotient}. 

\smallskip 

Fix $\l \in (-\e , \e)$ and consider a free homotopy class $\S$ of $S$ that is \emph{not} trivial.
As in the proof of Lemma~\ref{lem:quotient}, let $U$ be an open set in $\Hn{2}$ such that $\bar{\cD} \inc U$ 
and $\rest{\pi}{U}$ is injective. 

\medskip

If $\s_{\l} : [0 , 1] \to S$ is a closed curve of shortest $\cF_{\l}$-length within $\S$, 
the image $\s_{\l}([0 , 1])$ of $\s_{\l}$ can not entirely lie in $\pi(\cD)$. 
Indeed, if this were the case, the image of the curve $f^{-1} \circ \s_{\l}$
would be included in $\cD$, where $f : (U , F_{\l}) \to (\pi(U) , \cF_{\l})$ is the isometry
induced by $\pi$ and $F_{\l}$ is the lift to $\Hn{2}$ of $\cF_{\l}$. 
But $f^{-1} \circ \s_{\l}$ is contractible to a point in $\cD$ 
and thus $\s_{\l}$ would be contractible to a point in $\pi(\cD)$,
which is not possible since $\S$ is not trivial.
So, there exists a point $p_{0}$ in the image of $\s_{\l}$ that is not in $\pi(\cD)$.

\medskip

Let $x_{0} \in \Hn{2}$ such that $\pi(x_{0}) = p_{0}$ and denote by $\bar{\s}_{\l}$ the unique lift
of $\s_{\l}$ to $\Hn{2}$ starting at $x_{0}$. 

\smallskip 

If $y_{0} \in \pi^{-1}(p_{0})$ is the end point of $\bar{\s}_{\l}$, 
then we have $x_{0} , y_{0} \in \Hn{2} \setmin \C(\cD)$, 
and therefore
\begin{equation} \label{equ:distance}
   D_{F_{\l}}(x_{0} , y_{0}) = D_{F_{0}}(x_{0} , y_{0})
\end{equation}
by Lemma~\ref{lem:distance}.

Now, we have 
\begin{equation} \label{equ:length}
   L_{F_{\l}}(\bar{\s}_{\l}) = L_{\cF_{\l}}(\s_{\l})
\end{equation}
since $\pi : (\Hn{2} , F_{\l}) \to (S , \cF_{\l})$ is a local isometry. 
But this implies that $\bar{\s}_{\l}$ is a distance minimizing geodesic 
connecting $x_{0}$ to $y_{0}$ in $(\Hn{2} , F_{\l})$
because if this were not the case, there would be a $F_{\l}$-geodesic $[x_{0} , y_{0}]_{F_{\l}}$ such that
$L_{F_{\l}}([x_{0} , y_{0}]_{F_{\l}}) < L_{F_{\l}}(\bar{\s}_{\l})$ and hence
$L_{F_{\l}}([x_{0} , y_{0}]_{F_{\l}}) < L_{\cF_{\l}}(\s_{\l})$ by Equation~\ref{equ:length}.
Therefore, we would get $L_{\cF_{\l}}(\pi \circ [x_{0} , y_{0}]_{F_{\l}}) < L_{\cF_{\l}}(\s_{\l})$,
which is not possible since $\pi \circ [x_{0} , y_{0}]_{F_{\l}}$ is a closed curve that belongs to $\S$ 
(indeed, as $[x_{0} , y_{0}]_{F_{\l}}$ is homotopic to $\bar{\s}_{\l}$ 
in the simply connected space $\Hn{2}$ 
with fixed ends $x_{0}$ and $y_{0}$, the closed curve $\pi \circ [x_{0} , y_{0}]_{F_{\l}}$ 
is homotopic to $\s_{\l}$ in $S$
with fixed base point $p_{0}$).

So, we have $L_{F_{\l}}(\bar{\s}_{\l}) = D_{F_{\l}}(x_{0} , y_{0})$, 
and hence
\begin{equation} \label{equ:length-dist}
   L_{\cF_{\l}}(\s_{\l}) = D_{F_{0}}(x_{0} , y_{0})
\end{equation}
by Equations~\ref{equ:distance} and \ref{equ:length}.

\medskip 

Next, if $\bar{\s}_{0} : [0 , 1] \to \Hn{2}$ is a $D_{F_{0}}$-distance minimizing geodesic 
connecting $x_{0}$ to $y_{0}$,
the curve $\s_{0} := \pi \circ \bar{\s}_{0}$ is a closed curve that belongs to $\S$ 
(same reasoning as above for $\pi \circ [x_{0} , y_{0}]_{F_{\l}}$) 
with $L_{F_{0}}(\bar{\s_{0}}) = L_{\cF_{0}}(\s_{0})$ 
since $\pi : (\Hn{2} , F_{0}) \to (S , \cF_{0})$ is a local isometry. 
Thus, we get 
\begin{equation} \label{equ:comparison-1}
   L_{\cF_{\l}}(\s_{\l}) = L_{\cF_{0}}(\s_{0})
\end{equation}
by Equation~\ref{equ:length-dist}. 

\medskip 

On the other hand, starting with is a closed curve $\tau_{0} : [0 , 1] \to S$ 
of shortest $\cF_{0}$-length within $\S$
and using exactly the same steps as previously (reversing the roles of $\cF_{\l}$ and $\cF_{0}$), 
there exists a closed curve $\tau_{\l} : [0 , 1] \to S$ that belongs to $\S$ and satisfies 
\begin{equation} \label{equ:comparison-2}
   L_{\cF_{0}}(\tau_{0}) = L_{\cF_{\l}}(\tau_{\l}).
\end{equation}

\medskip 

Finally, Equations~\ref{equ:comparison-1} and~\ref{equ:comparison-2} yield  
$$
L_{\cF_{\l}}(\s_{\l}) \leq L_{\cF_{\l}}(\tau_{\l}) 
= 
L_{\cF_{0}}(\tau_{0}) \leq L_{\cF_{0}}(\s_{0}) = L_{\cF_{\l}}(\s_{\l}),
$$
and therefore $L_{\cF_{\l}}(\s_{\l}) = L_{\cF_{0}}(\tau_{0})$. 

\smallskip 

This proves Corollary~\ref{cor:spectrum}.
\end{proof}

\medskip

\begin{remark*}
It is to be noticed here that the proof of Corollary~\ref{cor:spectrum} shows that 
for each non-trivial free homotopy class $\S$ for $S$ and each $\l \in (-\l_{0} , \l_{0})$, 
there is a \emph{unique} (up to reparameterization) closed curve $\Tn{1} := \RR / \ZZ \to S$ 
in $\S$ of shortest $\cF_{\l}$-length. 

\medskip

\begin{proof}~\\ 
\indent Let $\s : \Tn{1} \to S$ and $\tau : \Tn{1} \to S$ be closed curves 
in $\S$ of (the same) shortest $\cF_{\l}$-length (thus $\cF_{\l}$-geodesics) and prove they are equal
up to a translation in $\Tn{1}$.

\smallskip

If there were $p_{0} \in \s(\Tn{1}) \setmin \pi(\cD)$ and $p_{1} \in \tau(\Tn{1}) \setmin \pi(\cD)$ 
with $p_{0} \notin \tau(\Tn{1})$ and $p_{1} \notin \s(\Tn{1})$,
then the same reasoning as in the proof of Corollary~\ref{cor:spectrum}
would lead to the existence of closed curves $\s_{0} : \Tn{1} \to S$ and $\tau_{0} : \Tn{1} \to S$
in $\S$ of shortest $\cF_{0}$-length with $p_{0} \in \s_{0}(\Tn{1})$ and $p_{1} \in \tau_{0}(\Tn{1})$.
As $p_{0} \neq p_{1}$, we would get $\s_{0} \neq \tau_{0}$, which is not possible
since it is well known there is a unique (up to reparameterization) closed curve
$\Tn{1} \to S$ in $\S$ of shortest $\cF_{0}$-length since $\cF_{0}$ is Riemannian hyperbolic.

\smallskip

So, we have $\s(\Tn{1}) \setmin \pi(\cD) \inc \tau(\Tn{1})$ 
or $\tau(\Tn{1}) \setmin \pi(\cD) \inc \s(\Tn{1})$, 
and this implies there exist $t_{0} \in \Tn{1}$ and a neighborhood $\cV$ of $0$ in $\Tn{1}$ such that
$\s(t) = \tau(t + t_{0})$ for all $t \in \cV$.
Thus $\s(t) = \tau(t + t_{0})$ for all $t \in \Tn{1}$ since $\s$ 
and $\Tn{1} \ni t \longmapsto \tau(t + t_{0}) \in S$ are $\cF_{\l}$-geodesics. 
\end{proof} 
\end{remark*}

\bigskip

Last, we discuss the volume growth entropy, considered with respect to the
Holmes-Thompson volume, which we now define. Let $(M , \cF)$ be an $n$-dimensional
Finsler manifold. For each $p \in M$, let $B_{p}^{\cF} \! M := \{ v \in T_{p}M \st \cF(p , v) < 1 \}$
be the unit open ball in $T_{p}M$ for the norm $\cF(p , \cdot)$,
and $\left( B_{p}^{\cF} \! M \right)^{\! \circ}$
its dual set in $T_{p}^{*}M$ 
(recall that for any set $X$ in a finite dimensional vector space $V$, 
we have $X^{\circ} := \{ \f \in V^{*} \st \f(v) \leq 1 \ \mbox{for all} \ v \in X \} \inc V^{*}$).
Then define the unit $\cF$-ball co-tangent bundle 
\begin{eqnarray*}
   \left( B^{\cF} \! M \right)^{\! \circ} 
   & := & 
   \bigcup_{p \in M} \{ p \} \! \times \! \left( B_{p}^{\cF} \! M \right)^{\! \circ} \\
   & = & 
   \left\{ (p , \f) \in T^{*}M \st \f(v) \leq 1 
   \ \ \mbox{for all} \ \ v \in B_{p}^{\cF} \! M \right\} \inc T^{*}M.
\end{eqnarray*}
\indent Let $\om$ be the canonical symplectic form on $T^{*}M$ given by $\om := \ed \a$,
where $\a$ is the Liouville $1$-form on $T^{*}M$, and 
$$
\Om := \frac{1}{n!} \underbrace{\om \wedge \cdots \wedge \om}_{n \ \mbox{\tiny times}}
$$ 
be the canonical volume form on $T^{*}M$.

\medskip 

For any Borel subset $A \inc M$, define the Holmes-Thompson volume of $A$ by
$$
\Vol{\cF}{A} := \frac{1}{C_{n}} \oldint_{\rest{\left( B^{\cF} \! M \right)^{\! \circ}}{A}} \!\!\! \Om,
$$ 
where 
$\rest{\left( B^{\cF} \! M \right)^{\! \circ}}{A} 
:= 
\left\{ (p , \f) \in \left( B^{\cF} \! M \right)^{\! \circ} \st p \in A \right\} \inc T^{*}M$
and $C_{n}$ is the volume of the unit open ball in $n$-dimensional Euclidean space.

\medskip 

The Holmes-Thompson volume generalizes the Riemannian volume in the sense that if
$\cF$ is the Finsler metric associated with a Riemannian metric $g$ on $M$, 
then $\mathrm{Vol}_{\cF} = \mathrm{vol}_{g}$.
Note that for a Finsler manifold there is another choice of volume that generalizes Riemannian volume 
called the Busemann volume which corresponds to the Hausdorff measure (see \cite{BBI01}, page 192). 
It is to be mentionned that partial results concerning the entropy rigidity question have been
obtained using the Busemann volume (see~\cite{Vero99} and~\cite{BolNew01}). 
Moreover, to get a taste of the difference between these two notions of volume in Finsler geometry, 
one may have a look at \cite{AlvBer05}. 

\medskip

\begin{remark}
~
\begin{enumerate}
   \item If $\cF^{*} : T^{*}M \to \RR$ is the dual Finsler metric of $\cF$ defined by
   $$
   \cF^{*}(p , \f) := \max{\{ \f(v) \st v \in T_{p}M \ \ \mbox{and} \ \ \cF(p , v) = 1 \}},
   $$
   then, for each $p \in M$, we have 
   $$
   \left( B_{p}^{\cF} \! M \right)^{\! \circ} 
   = 
   B_{p}^{\cF^{*} \!} \! M := \{ \f \in T_{p}^{*}M \st \cF^{*}(p , \f) < 1 \} \inc T_{p}^{*}M. 
   $$

   \medskip 

   \item Given any Riemannian metric $g$ on $M$, we have the formula 
   $$
   \Vol{\cF}{A} 
   = 
   \frac{1}{C_{n}} \int{A}{}{\ \vol{g^{*}(p) \!}
   {\! \left( B_{p}^{\cF} \! M \right)^{\! \circ}} \!\!}{\vol{g \!}{p}}
   $$
   for any Borel subset $A \inc M$, where $g^{*}(p)$ is the dual scalar product of $g(p)$ on $T_{p}^{*}M$
   and $\mathrm{vol}_{g^{*}(p)}$ its associated Haar measure (see \cite{BurIva02}). 

   \medskip 

   \item In case the Finsler metric $\cF$ is \emph{smooth} and \emph{strongly convex}, 
   let $\Om_{\cF}$ be the symplectic volume form on $TM \setmin \{ 0 \}$ associated with $\cF$ 
   defined as the pullback of the canonical volume form on $T^{*}M$ by the Legendre transform 
   $TM  \setmin \{ 0 \} \to T^{*}M$ induced by $\cF$
   (this map is a local diffeomorphism since $\cF$ is strongly convex). \\
   Then we can write
   $$
   \Vol{\cF}{A} 
   = 
   \frac{1}{C_{n}} \oldint_{(TM  \backslash \{ 0 \}) \cap \rest{B^{\cF} \! M}{A}} \!\!\! \Om_{\cF},
   $$  
   where $\rest{B^{\cF} \! M}{A} 
   := \{ (p , v) \in TM \st p \in A \ \mbox{and} \ \cF(p , v) < 1 \} \inc TM$.    
\end{enumerate}
\end{remark}

\bigskip

We shall now use the Main Theorem together with the following key result by Ivanov 
to prove Corollary~\ref{cor:entropy}. 

\begin{theorem}[Ivanov, \cite{Iva01}] \label{thm:Ivanov}
   Let $\D$ be an open Euclidean disk in $\Rn{2}$, 
   and consider \emph{smooth strongly convex} Finsler metrics $\gF_{0}$ and $\gF$ on $\Rn{2}$. 
   Assume that \emph{every} two points in $\D$ can be joined \emph{within} $\bar{\D}$ 
   by a \emph{unique} (up to reparametrization) geodesic of $\gF_{0}$ and by a geodesic of $\gF$. 
   Then, if $d_{\gF}(x , y) \geq d_{\gF_{0} \!}(x , y)$
   for all $x , y \in \partial \D$, we have $\Vol{\gF}{\D} \geq \Vol{\gF_{0} \!}{\D}$, 
   where $d_{\gF_{0}}$ and $d_{\gF}$ are the metrics induced on $\partial \D$ 
   by the distance functions on $\Rn{2}$ associated respectively with $\gF_{0}$ and $\gF$. 
\end{theorem}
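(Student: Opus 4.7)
The approach I would take is to express the Holmes--Thompson volume as a symplectic integral over the unit co-ball bundle and then reduce it, via the Hamiltonian geodesic flow, to an integral over the space of geodesic chords of $\bar\D$. Concretely, one should establish a Santal\'{o}-type formula expressing $\Vol{F}{\D}$, up to a universal constant, as the integral over the space $\gC_{F}$ of oriented maximal $F$-geodesic segments in $\bar\D$ with endpoints on $\partial\D$, of the $F$-length function $L_{F}$ against a measure $\mu_{F}$ induced by the Liouville form via symplectic reduction by the geodesic flow. The smoothness and strong convexity of $F$, together with the hypothesis that every two points in $\D$ can be joined by an $F$-geodesic lying in $\bar\D$, are precisely what make the geodesic flow complete on the unit co-ball bundle over $\bar\D$ and the reduction well defined.

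Once this is in place, I would parametrize $\gC_{F}$ by endpoint pairs through the map $\gamma \mapsto (\gamma(0),\gamma(1)) \in \partial\D \times \partial\D$. For $\gF_{0}$, the uniqueness hypothesis turns this map into a bijection, so each chord has length exactly $d_{\gF_{0}}(x,y)$; for $\gF$, the map is surjective but possibly non-injective, yet every chord with endpoints $x, y$ has $F$-length at least $d_{\gF}(x,y)$ by definition of the boundary distance. Pushing the two Santal\'{o} formulas forward then produces two integrals of length-like functions on $\partial\D \times \partial\D$, against pushforward measures $\nu_{\gF_{0}}$ and $\nu_{\gF}$ respectively, and the chord-by-chord inequality on integrands follows immediately from the assumed $d_{\gF} \geq d_{\gF_{0}}$.

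The main obstacle is the comparison of the two pushforward measures $\nu_{\gF_{0}}$ and $\nu_{\gF}$ on $\partial\D \times \partial\D$, since a priori these depend on the bulk and boundary behavior of the ambient Finsler metric and may differ substantially (recall that $\gF_0$ and $\gF$ are \emph{not} assumed to agree along $\partial\D$). A key observation, which I would borrow from Ivanov, is that in the planar setting the pushforward measure admits an explicit integral-geometric description on $\partial\D \times \partial\D$ which is ultimately controlled by the boundary distance function alone; the planar topology of $\bar\D$ and the global existence of chord geodesics enter essentially here. Granting this comparison, a Fubini-type estimate combined with the hypothesis $d_{\gF} \geq d_{\gF_{0}}$ yields $\Vol{\gF}{\D} \geq \Vol{\gF_{0}}{\D}$, as desired.
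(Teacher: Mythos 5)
First, a point of context: the paper does not prove this statement at all. It is quoted verbatim as a theorem of Ivanov, with a citation to \cite{Iva01}, and is used as a black box to derive Consequence~2.1 and ultimately Corollary~1.3. So there is no in-paper argument to compare yours against; the only question is whether your sketch would stand on its own as a proof of Ivanov's theorem.

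It does not, because the step you label as ``the main obstacle'' is in fact the entire content of the theorem, and you resolve it by writing ``a key observation, which I would borrow from Ivanov'' and then ``granting this comparison.'' Everything up to that point --- the Santal\'{o}-type expression of the Holmes--Thompson volume as an integral over the space of geodesic chords weighted by length, and the pointwise inequality $L_{\gF}(\gamma) \geq d_{\gF}(x,y) \geq d_{\gF_{0}}(x,y)$ for a chord $\gamma$ with endpoints $x,y$ --- is the easy half; it reduces the problem to comparing the two Liouville-induced measures $\nu_{\gF_{0}}$ and $\nu_{\gF}$ on $\partial\D \times \partial\D$, and these measures genuinely depend on the interior geometry of the two metrics, not only on their boundary distance functions. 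A chord-by-chord comparison of integrands against two \emph{different} and a priori incomparable measures proves nothing; Ivanov's actual argument does not take this form (it works instead with the boundary distance functions $d_{\gF_{0}}(x,\cdot)$ as generating functions for Lagrangian sections of the unit co-ball bundle and compares symplectic volumes of the regions they bound), precisely because the naive measure comparison is unavailable. As written, your proposal is a correct identification of a plausible framework together with an unproven assertion that closes the gap, i.e., it is circular where it matters. Two smaller inaccuracies: the unique $\gF_{0}$-geodesic joining $x,y \in \partial\D$ \emph{within} $\bar{\D}$ need not realize the ambient distance $d_{\gF_{0}}(x,y)$ on $\Rn{2}$ (the hypotheses only guarantee a geodesic in $\bar\D$, not a globally minimizing one), and the map from chords to endpoint pairs being a bijection for $\gF_{0}$ requires the uniqueness hypothesis plus an argument that chords actually reach the boundary, which you assert but do not justify.
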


\medskip 

From this, we get

\begin{consequence} \label{csq:Ivanov}
   Let $\gF$ be a smooth strongly convex Finsler metric on $\Hn{2}$ without conjugate points and 
   such that every two points in $\cD$ can be joined by a geodesic of $\gF$ 
   whose image is contained in $\cD$. 
   Then, if $d_{\gF} = d_{F_{0}}$, we have $\Vol{\gF}{\cD} = \Vol{F_{0} \!}{\cD}$. 
\end{consequence}

\medskip

\begin{proof}~\\ 
\indent First of all, the hyperbolic Finsler metric $F_{0}$ is smooth and strongly convex, 
and every two points in $\Hn{2}$ can be joined by a unique (up to reparametrization) geodesic of $F_{0}$. 
Therefore, since $\cD$ is an open ball in $(\Hn{2} , F_{0})$, the unique (up to reparametrization) 
$F_{0}$-geodesic joining two points in $\cD$ has its image contained in $\cD$. 
Thus, $d_{\gF} \geq d_{F_{0}}$ yields $\Vol{\gF}{\cD} \geq \Vol{F_{0} \!}{\cD}$ 
by Theorem~\ref{thm:Ivanov}. 

\smallskip 

On the other hand, since the smooth strongly convex Finsler metric $\gF$ has no conjugate points, 
every two points in $\cD$ can \emph{actually} be joined within $\cD$ 
by a \emph{unique} (up to reparametrization) geodesic of $\gF$. 
So, using $d_{\gF} \leq d_{F_{0}}$, we have $\Vol{\gF}{\cD} \leq \Vol{F_{0} \!}{\cD}$ 
still by Theorem~\ref{thm:Ivanov}. 

\smallskip 

Conclusion: $\Vol{\gF}{\cD} = \Vol{F_{0} \!}{\cD}$. 
\end{proof}

\bigskip

Let us now prove the following two independent lemmas: 

\begin{lemma} \label{lem:inclusions}
   Let $\cF$ be a Finsler metric on $S$ such that 
   
   \begin{enumerate}
      \item $d_{F} = d_{F_{0}}$, 

      \smallskip 

      \item $F(x , u) = F_{0}(x , u)$ for all $x \in \Hn{2} \setmin \C(\cD)$ and $u \in T_{x}\Hn{2}$.  
   \end{enumerate}
   Let $c > 0$ be a constant such that $\mathrm{diam}_{F}(\cD) \leq c$ 
   and $\mathrm{diam}_{F_{0}}(\cD) \leq c$. 
   Then for all $x \in \Hn{2} \setmin \C(\cD)$ and $R > c$, we have 
   $$
   B_{F_{0}}(x , R - c) \inc B_{F}(x , R) \inc B_{F_{0}}(x , R + c).
   $$
\end{lemma}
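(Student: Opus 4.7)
The plan is to first upgrade Lemma~\ref{lem:distance} (which gives equality of $D_{F}$ and $D_{F_{0}}$ on the complement of $\C(\cD)$) into a bi-Lipschitz-type bound
$$
|D_{F}(x , y) - D_{F_{0}}(x , y)| \leq c
$$
valid for all $x \in \Hn{2} \setmin \C(\cD)$ and all $y \in \Hn{2}$, and then to derive the two inclusions by elementary manipulations of inequalities. The key geometric input is that the connected components of the open set $\C(\cD) = \pi^{-1}(\pi(\cD))$ are precisely the translates $\c(\cD)$, $\c \in \C$, and their closures $\c(\bar{\cD})$ are pairwise disjoint (this follows from the injectivity of $\rest{\pi}{U}$ on the open neighborhood $U \supset \bar{\cD}$ used in the proof of Lemma~\ref{lem:quotient}). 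In particular, each $\c(\bar{\cD})$ has boundary $\c(\bd{\cD})$ lying in $\Hn{2} \setmin \C(\cD)$. Note also that both $F_{0}$ and $F$ are $\C$-invariant, so $\mathrm{diam}_{F}(\c(\cD)) = \mathrm{diam}_{F}(\cD) \leq c$ and similarly for $F_{0}$, for every $\c \in \C$.

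To get the bound $|D_{F}(x , y) - D_{F_{0}}(x , y)| \leq c$, fix $x \in \Hn{2} \setmin \C(\cD)$. If $y \in \Hn{2} \setmin \C(\cD)$, equality holds by Lemma~\ref{lem:distance}. Otherwise $y \in \c(\cD)$ for some $\c \in \C$. Let $\s_{0} : [0 , 1] \to \Hn{2}$ be a $D_{F_{0}}$-distance minimizing geodesic joining $x$ to $y$, and let $z := \s_{0}(t^{*})$ with $t^{*} := \min{\{ t \in [0 , 1] \st \s_{0}(t) \in \c(\bar{\cD}) \}} > 0$ (this minimum exists and lies in $(0 , 1]$ because $x \notin \c(\bar{\cD})$ and $y \in \c(\cD)$). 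By minimality, $z \in \c(\bd{\cD}) \inc \Hn{2} \setmin \C(\cD)$, so Lemma~\ref{lem:distance} gives $D_{F}(x , z) = D_{F_{0}}(x , z) \leq D_{F_{0}}(x , y)$, while $D_{F}(z , y) \leq \mathrm{diam}_{F}(\c(\bar{\cD})) \leq c$. The triangle inequality then yields
$$
D_{F}(x , y) \leq D_{F}(x , z) + D_{F}(z , y) \leq D_{F_{0}}(x , y) + c.
$$
The reverse inequality $D_{F_{0}}(x , y) \leq D_{F}(x , y) + c$ is obtained by the same argument, starting from a $D_{F}$-distance minimizing geodesic from $x$ to $y$ (which exists since $F$ is complete), selecting the first point $z'$ at which it meets $\c(\bar{\cD})$, and using $\mathrm{diam}_{F_{0}}(\cD) \leq c$.

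The two inclusions now follow at once. For any $y \in B_{F_{0}}(x , R - c)$ we have $D_{F}(x , y) \leq D_{F_{0}}(x , y) + c < (R - c) + c = R$, so $y \in B_{F}(x , R)$; and for any $y \in B_{F}(x , R)$ we have $D_{F_{0}}(x , y) \leq D_{F}(x , y) + c < R + c$, so $y \in B_{F_{0}}(x , R + c)$. The main subtlety — and essentially the only obstacle — is the careful selection of the auxiliary point $z$ on the boundary of the component of $\C(\cD)$ containing $y$, since one needs $z \in \Hn{2} \setmin \C(\cD)$ in order to invoke Lemma~\ref{lem:distance}; the minimal-entry-time trick handles this cleanly once one observes that distinct translates of $\bar{\cD}$ are disjoint.
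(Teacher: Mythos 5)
Your proof is correct and follows essentially the same route as the paper's: both reduce the inclusions to the sandwich $|D_{F}(x,y)-D_{F_{0}}(x,y)|\leq c$, obtained by cutting a minimizing geodesic at its first entry point into the component of $\C(\cD)$ containing $y$ (a boundary point, hence outside $\C(\cD)$), then applying Lemma~\ref{lem:distance} and the diameter bound. The only cosmetic slip is the claim that $t^{*}>0$ because $x\notin\c(\bar{\cD})$ — the hypothesis only places $x$ outside the open set $\C(\cD)$, so $x$ could lie on $\c(\bd{\cD})$ and give $t^{*}=0$ — but the argument is unaffected since $z=x$ still works.
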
 

\medskip 

and 

\medskip 

\begin{lemma} \label{lem:volumes}
   Let $\cF$ be a Finsler metric on $S$ such that 
   
   \begin{enumerate}
      \item $\Vol{F}{\cD} = \Vol{F_{0} \!}{\cD}$, 

      \smallskip 

      \item $F(x , u) = F_{0}(x , u)$ for all $x \in \Hn{2} \setmin \C(\cD)$ and $u \in T_{x}\Hn{2}$.  
   \end{enumerate}
   Let $c > 0$ be a constant such that $\mathrm{diam}_{F}(\cD) \leq c$ 
   and $\mathrm{diam}_{F_{0}}(\cD) \leq c$. 
   Then for all $x \in \Hn{2} \setmin \C(\cD)$ and $R > 2 c$, we have 
   $$
   \Vol{F}{B_{F_{0}}(x , R - c)} \leq \Vol{F_{0} \!}{B_{F_{0}}(x , R)} \leq \Vol{F}{B_{F_{0}}(x , R + c)}.
   $$   
\end{lemma}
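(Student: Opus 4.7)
The plan is to decompose each $F_{0}$-ball into its part outside $\C(\cD)$ and its pieces inside the individual translates $\c(\cD)$ for $\c \in \C$, and then use the diameter bound $c$ to compare these pieces. Since $\rest{\pi}{\bar{\cD}}$ is injective the translates $\c(\cD)$ for $\c \in \C$ are pairwise disjoint (as already established in the proof of Lemma~\ref{lem:quotient}), and $\C$-invariance of both $F$ and $F_{0}$ gives $\Vol{F}{\c(\cD)} = \Vol{F}{\cD}$, $\Vol{F_{0}}{\c(\cD)} = \Vol{F_{0}}{\cD}$ and $\mathrm{diam}_{F_{0}}(\c(\cD)) \leq c$ for every $\c \in \C$. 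Combined with hypothesis~(1), this yields $\Vol{F}{\c(\cD)} = \Vol{F_{0}}{\c(\cD)}$ for every $\c$, while hypothesis~(2) gives $\Vol{F}{A} = \Vol{F_{0}}{A}$ for every Borel subset $A \inc \Hn{2} \setmin \C(\cD)$.

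For the first inequality I would write
\begin{equation*}
   \Vol{F}{B_{F_{0}}(x , R - c)}
   =
   \Vol{F}{B_{F_{0}}(x , R - c) \setmin \C(\cD)}
   +
   \sum_{\c \in \C} \Vol{F}{B_{F_{0}}(x , R - c) \cap \c(\cD)},
\end{equation*}
and similarly expand $\Vol{F_{0}}{B_{F_{0}}(x , R)}$. The outside-$\C(\cD)$ contributions satisfy the required inequality by the coincidence of $F$- and $F_{0}$-volumes there, together with the monotonicity of $F_{0}$-volume under the inclusion $B_{F_{0}}(x , R - c) \inc B_{F_{0}}(x , R)$. For each $\c$-contribution the key observation is that if $B_{F_{0}}(x , R - c) \cap \c(\cD) \neq \Oset$, then $\mathrm{diam}_{F_{0}}(\c(\cD)) \leq c$ forces $\c(\cD) \inc B_{F_{0}}(x , R)$, so
\begin{equation*}
   \Vol{F}{B_{F_{0}}(x , R - c) \cap \c(\cD)}
   \leq \Vol{F}{\c(\cD)} = \Vol{F_{0}}{\c(\cD)} = \Vol{F_{0}}{B_{F_{0}}(x , R) \cap \c(\cD)};
\end{equation*}
if the intersection on the left is empty, the corresponding contribution is simply zero. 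Summing over $\c$ delivers the first inequality.

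The second inequality $\Vol{F_{0}}{B_{F_{0}}(x , R)} \leq \Vol{F}{B_{F_{0}}(x , R + c)}$ follows by the symmetric argument: whenever $B_{F_{0}}(x , R) \cap \c(\cD) \neq \Oset$, the diameter bound forces $\c(\cD) \inc B_{F_{0}}(x , R + c)$, giving
\begin{equation*}
   \Vol{F_{0}}{B_{F_{0}}(x , R) \cap \c(\cD)}
   \leq \Vol{F_{0}}{\c(\cD)} = \Vol{F}{\c(\cD)} = \Vol{F}{B_{F_{0}}(x , R + c) \cap \c(\cD)},
\end{equation*}
while the outside-$\C(\cD)$ part is handled once more by monotonicity and the agreement of $F$ and $F_{0}$ there. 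The constraint $R > 2 c$ merely ensures $R - c > c > 0$, so that the smaller ball is non-trivial. I do not expect any serious obstacle: the proof is essentially bookkeeping, the real input being that hypothesis~(1) together with $\C$-invariance makes the $F$- and $F_{0}$-volumes of \emph{entire} translates coincide, so that the diameter bound lets one absorb every partially-intersected translate into a ball of radius enlarged by~$c$.
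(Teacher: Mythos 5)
Your proof is correct and rests on exactly the same ingredients as the paper's: $\C$-invariance makes the $F$- and $F_{0}$-volumes of whole translates $\c(\cD)$ coincide via hypothesis~(1), hypothesis~(2) handles everything outside $\C(\cD)$, and the diameter bound absorbs every partially-met translate into a ball enlarged by $c$. The paper merely packages the bookkeeping differently, forming the union $\cU$ of the connected components of $\C(\cD)$ meeting $\partial B_{F_{0}}(x , R)$ and sandwiching $B_{F_{0}}(x , R) \setmin \cU$ and $B_{F_{0}}(x , R) \cup \cU$ between $B_{F_{0}}(x , R - c)$ and $B_{F_{0}}(x , R + c)$, instead of summing over individual translates as you do.
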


\bigskip

\begin{proof}[Proof of Lemma~\ref{lem:inclusions}]~\\ 
\indent Let $x \in \Hn{2} \setmin \C(\cD)$ and $y \in \Hn{2}$. 

\smallskip 

We will show
$$
D_{F_{0}}(x , y) - c \leq D_{F}(x , y) \leq D_{F_{0}}(x , y) + c,
$$
which immediately implies the result.

\medskip 

Let us fix distance minimizing geodesics $[x , y]_{F}$ and $[x , y]_{F_{0}}$ 
for $F$ and $F_{0}$ respectively connecting $x$ to $y$.

\smallskip 

By Lemma~\ref{lem:distance}, we know that if $y \in \Hn{2} \setmin \C(\cD)$, 
then $D_{F}(x , y) = D_{F_{0}}(x , y)$, and the inequalities above hold. 

\medskip 

So, suppose $y$ is in a connected component $C$ of $\C(\cD)$ and let $e_{F_{0}}$ and $e_{F}$ be the
points at which respectively $[x , y]_{F_{0}}$ and $[x , y]_{F}$ enter
$C$ the first time ($e_{F_{0}}$ and $e_{F}$ lie in $\bC$, see Figure~\ref{fig:lem-inclusions}).

\begin{figure}[h] 

\begin{pspicture}(6,7.2) 
   \pscircle[linewidth=0.02](3,3){3} 
   \uput{0.1}[0](4.5,0.264){$C$} 
   \psdots[dotstyle=*,dotscale=0.9](1.5,6.5)(4,2)(1.005,5.226)(2,5.818) 
   \uput{0.15}[135](1.5,6.5){$x$} 
   \uput{0.15}[315](4,2){$y$} 
   \uput{0.1}[160](1,5.236){$e_{F}$} 
   \uput{0.2}[265](2,5.828){$e_{F_{0}}$} 
   \pscurve[linewidth=0.04](1.5,6.5)(1,5.236)(2,3.5)(1.5,2.5)(4,2) 
   \psline[linewidth=0.08]{c->}(1.943,3.75)(2,3.6) 
   \uput{0.15}[180](1.98,3.6){$[x , y]_{F}$} 
   \pscurve[linewidth=0.04,linestyle=dotted](1.5,6.5)(2,5.828)(4,4)(4,2) 
   \psline[linewidth=0.06]{c->}(3.975,4.05)(4.025,3.95) 
   \uput{0.15}[30](4,4){$[x , y]_{F_{0}}$} 
\end{pspicture}  
   \caption{\label{fig:lem-inclusions} Proof of Lemma~\ref{lem:inclusions}} 
\end{figure}

\smallskip 

As $\mathrm{diam}_{F_{0}}(\cD) \leq c$ and $\mathrm{diam}_{F}(\cD) \leq c$, 
we have $D_{F_{0}}(e_{F_{0}} , y) \leq c$ and $D_{F}(e_{F} , y) \leq c$. 
Furthermore, by Lemma~\ref{lem:distance} and since $x$, $e_{F_{0}}$ and $e_{F}$ 
all lie outside $\C(\cD)$, we have 
$D_{F}(x , e_{F}) = D_{F_{0}}(x , e_{F})$ and $D_{F}(x , e_{F_{0}}) = D_{F_{0}}(x , e_{F_{0}})$. 
Thus
$$
D_{F}(x , y) \leq D_{F}(x , e_{F}) + D_{F}(e_{F} , y) \leq D_{F_{0}}(x , e_{F}) + c
$$
and
$$ 
D_{F_{0}}(x , y) \leq D_{F_{0}}(x , e_{F_{0}}) + D_{F_{0}}(e_{F_{0}} , y) \leq D_{F}(x , e_{F_{0}}) + c.
$$
\indent We conclude that
$$
D_{F_{0}}(x , y) - c \leq D_{F}(x , y) \leq D_{F_{0}}(x , y) + c,
$$
as claimed.
\end{proof}

\bigskip

\begin{proof}[Proof of Lemma~\ref{lem:volumes}]~\\ 
\indent Let $x \in \Hn{2}$ and $R > 2 c$. 

\smallskip 

By assumption~(1), we have $\Vol{F}{\c(\cD)} = \Vol{F_{0} \!}{\c(\cD)}$ 
for each $\c \in \C$ since $\C$ is a group of isometries for both $F$ and $F_{0}$.

\medskip 

On the other hand, by assumption~(2),
Borel sets in $\Hn{2}$ not intersecting $\C(\cD)$ have the same Holmes-Thompson volume 
with respect to $F_{0}$ as with respect to $F$ 
(the boundary of $\C(\cD)$ being a set of zero measure for both Holmes-Thompson volumes). 

\smallskip 

Let $\cU$ be the union of the connected components of $\C(\cD)$ 
that intersect $\partial B_{F_{0}}(x , R)$. 
Then
\begin{align*}
   \Vol{F}{B_{F_0}(x , R) \setmin \cU} & = \Vol{F_{0} \!}{B_{F_{0}}(x , R) \setmin \cU} \\
   \intertext{and}
   \Vol{F}{B_{F_{0}}(x , R) \cup \cU} & = \Vol{F_{0} \!}{B_{F_{0}}(x , R) \cup \cU}.
\end{align*}

\smallskip 

Since for each connected component $\c(\cD)$ ($\c \in \C$) of $\C(\cD)$ we have
$\mathrm{diam}_{F_{0}}(\c(\cD)) \leq c$ and $\mathrm{diam}_{F}(\c(\cD)) \leq c$, 
one gets
$$
B_{F_{0}}(x , R - c) 
\inc 
B_{F_{0}}(x , R) \setmin \cU \inc B_{F_{0}}(x , R)
\inc
B_{F_{0}}(x , R) \cup \cU
\inc 
B_{F_{0}}(x , R + c),
$$
and hence
$$
\Vol{F}{B_{F_{0}}(x , R - c)} \leq \Vol{F_{0} \!}{B_{F_{0}}(x , R)} \leq \Vol{F}{B_{F_{0}}(x , R + c)}.
$$
\end{proof}

\bigskip

Before proving Corollary~\ref{cor:entropy}, we need the following lemma about the volume growth entropy:  

\begin{lemma} \label{lem:entropy}
   Let $\cF$ be a Finsler metric on $S$ such that 
   
   \begin{enumerate}
      \item $d_{F} = d_{F_{0}}$, 

      \smallskip 

      \item $\Vol{F}{\cD} = \Vol{F_{0} \!}{\cD}$, and 

      \smallskip 

      \item $F(x , u) = F_{0}(x , u)$ for all $x \in \Hn{2} \setmin \C(\cD)$ and $u \in T_{x}\Hn{2}$. 
   \end{enumerate}
   Then $h(F) = h(F_{0})$. 
\end{lemma}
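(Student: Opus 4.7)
The plan is to combine Lemma~\ref{lem:inclusions} and Lemma~\ref{lem:volumes} into a two-sided sandwich of $\mathrm{Vol}_{F}(B_{F}(x,R))$ by $F_{0}$-volumes of $F_{0}$-balls of slightly larger and slightly smaller radius, and then pass to the limit. All three hypotheses of the lemma are exactly the hypotheses needed to invoke those two lemmas (hypothesis~(3) gives assumption~(2) of both), so the work reduces to a standard comparison argument.

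First I fix a basepoint $x_{0} \in \Hn{2} \setmin \C(\cD)$ (the existence of such a point is clear since $\C(\cD)$ is a disjoint union of translates of the relatively compact disk $\cD$), and I fix a constant $c>0$ with $\mathrm{diam}_{F}(\cD) \leq c$ and $\mathrm{diam}_{F_{0}}(\cD) \leq c$; such a $c$ exists because $\bar{\cD}$ is compact. Lemma~\ref{lem:inclusions} then yields, for every $R>c$,
\begin{equation*}
B_{F_{0}}(x_{0} , R-c) \inc B_{F}(x_{0} , R) \inc B_{F_{0}}(x_{0} , R+c),
\end{equation*}
which upon taking $\mathrm{Vol}_{F}$ on each side gives
\begin{equation*}
\Vol{F}{B_{F_{0}}(x_{0} , R-c)} \leq \Vol{F}{B_{F}(x_{0} , R)} \leq \Vol{F}{B_{F_{0}}(x_{0} , R+c)}.
\end{equation*}

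Next, Lemma~\ref{lem:volumes} (together with its symmetric reformulation obtained by substituting $R \to R \pm c$) compares $F$-volumes of $F_{0}$-balls with $F_{0}$-volumes of $F_{0}$-balls with a further shift by $c$; composing this with the previous sandwich one obtains, for all $R > 3c$,
\begin{equation*}
\Vol{F_{0}\!}{B_{F_{0}}(x_{0} , R-2c)} \leq \Vol{F}{B_{F}(x_{0} , R)} \leq \Vol{F_{0}\!}{B_{F_{0}}(x_{0} , R+2c)}.
\end{equation*}

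Taking $\log$, dividing by $R$ and letting $R \to +\infty$, I use that for any fixed $\kappa \in \RR$,
\begin{equation*}
\limsup_{R \to +\infty} \frac{1}{R} \log{\Vol{F_{0}\!}{B_{F_{0}}(x_{0} , R+\kappa)}}
= \limsup_{R \to +\infty} \frac{R+\kappa}{R} \cdot \frac{1}{R+\kappa} \log{\Vol{F_{0}\!}{B_{F_{0}}(x_{0} , R+\kappa)}}
= h(F_{0}),
\end{equation*}
so that both bounding quantities have $\limsup$ equal to $h(F_{0})$, and squeezing gives $h(F) = h(F_{0})$. I expect no real obstacle: the whole content is already hidden in Lemmas~\ref{lem:inclusions} and \ref{lem:volumes}, and the only point that requires a moment's care is checking that the compactness of $\bar{\cD}$ indeed furnishes a finite uniform diameter bound $c$ and that the entropy is independent of the chosen basepoint (a standard fact using the triangle inequality, which is why one is free to place $x_{0}$ outside $\C(\cD)$).
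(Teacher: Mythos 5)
Your proposal is correct and follows essentially the same route as the paper's own proof: sandwich $\Vol{F}{B_{F}(x,R)}$ between $F$-volumes of $F_{0}$-balls via Lemma~\ref{lem:inclusions}, convert these to $F_{0}$-volumes of $F_{0}$-balls shifted by $2c$ via Lemma~\ref{lem:volumes}, and squeeze using the fact that a bounded shift of the radius does not change the exponential growth rate. The extra remarks you add (existence of $c$ from compactness, basepoint independence) are fine and do not change the argument.
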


\medskip

\begin{proof}~\\ 
\indent Choose $x \in \Hn{2} \setmin \C(\cD)$ and $R > 2 c$. 

\smallskip 

By Lemma~\ref{lem:inclusions}, we have
$$
B_{F_{0}}(x , R - c) \inc B_{F}(x , R) \inc B_{F_{0}}(x  ,R + c).
$$
Therefore
$$
\Vol{F}{B_{F_{0}}(x , R - c)} \leq \Vol{F}{B_{F}(x , R)} \leq \Vol{F}{B_{F_{0}}(x , R + c)},
$$
and hence, by Lemma~\ref{lem:volumes},
$$
\Vol{F_{0} \!}{B_{F_{0}}(x , R - 2 c)} 
\leq 
\Vol{F}{B_{F}(x , R)} 
\leq 
\Vol{F_{0} \!}{B_{F_{0}}(x , R + 2 c)}.
$$

\smallskip 

But
$$
h(F_{0}) = \lim_{R \goes +\infty} \frac{1}{R} \log{(\Vol{F_{0} \!}{B_{F_{0}}(x , R - 2 c)})}
= 
\lim_{R \goes +\infty} \frac{1}{R} \log{(\Vol{F_{0} \!}{B_{F_{0}}(x , R + 2 c)})}.
$$
Thus $h(F) = h(F_{0})$, as desired.
\end{proof}

\bigskip

We are now able to prove Corollary~\ref{cor:entropy}.

\medskip

\begin{proof}[Proof of Corollary~\ref{cor:entropy}]~\\ 
\indent Let $\e > 0$ and $\iP : (-\e , \e) \times TS \to \RR$ as given by Lemma~\ref{lem:quotient}. 

\smallskip 

Fixing $\l \in (-\e , \e)$, the smooth strongly convex Finsler metric $F_{\l}$ on $\Hn{2}$ 
satisfies points~(4) and~(6) of Lemma~\ref{lem:quotient}, 
and thus the hypotheses of Consequence~\ref{csq:Ivanov}. 
Furthermore, as it satisfies point~(3) in Lemma~\ref{lem:quotient}, we then get 
$\Vol{F_{\l} \!}{\cD} = \Vol{F_{0} \!}{\cD}$ by Consequence~\ref{csq:Ivanov}. 

\smallskip 

So, by point~(5) in Lemma~\ref{lem:quotient}, $F_{\l}$ satisfies all the three hypotheses 
of Lemma~\ref{lem:entropy}. 
Therefore, according to this latter lemma, $h(F_{\l}) = h(F_{0})$, 
or equivalently $h(\cF_{\l}) = h(\cF_{0})$. 

\smallskip 

Now, let $U$ be an open set in $\Hn{2}$ such that $\bar{\cD} \inc U$ 
and $\rest{\pi}{U}$ is injective as in the proof of Lemma~\ref{lem:quotient}. 
Then $\pi$ induces isometries from $(U , F_{\l})$ onto $(\pi(U) , \cF_{\l})$ 
and from $(U , F_{0})$ onto $(\pi(U) , \cF_{0})$, 
which yield $\Vol{F_{\l} \!}{\cD} = \Vol{\cF_{\l} \!}{\pi(\cD)}$ and 
$\Vol{F_{0} \!}{\cD} = \Vol{\cF_{0} \!}{\pi(\cD)}$. 

Hence $\Vol{\cF_{\l} \!}{\pi(\cD)} = \Vol{\cF_{0} \!}{\pi(\cD)}$. 

\smallskip 

On the other hand, point~(5) in Lemma~\ref{lem:quotient} implies that 
$\cF_{0}(p , v) = \cF_{0}(p , v)$ for all $p \in S \setmin \pi(\cD)$ and $v \in T_{p}S$. 
Thus $\Vol{\cF_{\l} \!}{S \setmin \pi(\cD)} = \Vol{\cF_{0} \!}{S \setmin \pi(\cD)}$, and finally 
$\Vol{\cF_{\l} \!}{S} = \Vol{\cF_{0} \!}{S}$. 

Conclusion: $h(\cF_{\l})^{\! 2} \, \Vol{\cF_{\l} \!}{S} = h(\cF_{0})^{\! 2} \, \Vol{\cF_{0} \!}{S}$. 

\smallskip 

This finishes the proof of Corollary~\ref{cor:entropy}. 
\end{proof}

\bigskip
\bigskip
\bigskip 


\section{Proof of the Main Theorem} \label{sec:main-thm}

Throughout all this section, 
denote by $\scal{\cdot}{\cdot}$ the usual scalar product in $\Rn{2}$ and
$|\cdot|$ its associated norm. 
Let $\Hn{2} := \{ p \in \Rn{2} \st |p| < 1 \} \inc \Rn{2}$
endowed with the Klein metric $g_{0}$ that is given by
\begin{equation} \label{equ:Klein} 
   g_{0}(p) \act (v , w) 
   := 
   \frac{\scal{v}{w}}{1 - |p|^{2}} + \frac{\scal{p}{v} \scal{p}{w}}{(1 - |p|^{2})^{2}}
\end{equation}
for all $p \in \Hn{2}$ and $v , w \in T_{p}\Hn{2} = \Rn{2}$.

\smallskip 

Thus $(\Hn{2} , g_{0} )$ is a model of
the hyperbolic plane where images of the geodesics are \emph{affine segments}. 

\smallskip 

For each $r \in (0 , 1]$, let 
$$
\cD(r) := \{ p \in \Rn{2} \st |p| < r \} \inc \Hn{2}.
$$

\smallskip 

Finally, fix an arbitrary $R \in (0 , 1)$, let $\cD := \cD(R)$, 
and denote by $d_{g_{0}}$ the induced metric on $\bD$ by the distance function on $\Hn{2}$ 
associated with $g_{0}$. 

\bigskip
\bigskip 

\subsection{Arcostanzo's construction} \label{ssec:Arcostanzo} 
~

\medskip 

In~\cite{Arc94}, Arcostanzo gives conditions on a distance $d$ on $\bD$ and a set
$\gS$ of parameterized curves $\c : [0 , 1] \to \bar{\cD}$ in such a way that there exists a
Finsler metric $\sF$ on $\cD$ whose associated distance on $\cD$ extends to a distance on $\bar{\cD}$ 
that induces the metric $d$ on $\bD$ and such that $\{ \rest{\c}{(0 , 1)} \st \c \in \gS \}$ 
coincides with the set of maximal geodesics of $\cF$ after reparametrization by $(0 , 1)$. 
We will state this result precisely in the specific case when the
distance on $\bD$ is $d_{g_{0}}$, though more general results are established in~\cite{Arc94}.

\medskip 

We begin by giving Arcostanzo's conditions on a set of parameterized curves.

\begin{definition} \label{def:admissible}
   A set $\gS$ of parameterized curves $\c : [0 , 1] \to \Rn{2}$ is said to be admissible for $\cD$ 
   if and only if the following properties hold:

   \begin{enumerate}
      \item each $\c \in \gS$ is $\Cl{\infty}$, regular, injective, and satisfies $\c((0 , 1)) \inc \cD$; 

      \smallskip 

      \item for each $\c \in \gS$, we have $\c(0) , \c(1) \in \bD$; 

      \smallskip 

      \item for any $p , q \in \bar{\cD}$ with $p \neq q$, 
      there exists a unique $(\c , t_{0} , t_{1}) \in \gS \times [0 , 1] \times [0 , 1]$
      such that $p = \c(t_{0})$ and $q = \c(t_{1})$ with $t_{0} < t_{1}$; 

      \smallskip 

      \item for any $p \in \cD$ and $v \in T_{p}\cD = \Rn{2}$ with $v \neq 0$, 
      there exists a unique $(\c , t) \in \gS \times (0 , 1)$ 
      such that $p = \c(t)$ and $\c'(t)$ is parallel to $v$ with the same direction.
   \end{enumerate}
\end{definition}

\medskip 

For Arcostanzo's construction to yield a Finsler metric and not just a distance
on $\cD$, a certain amount of regularity is required about the way the end points
$\c(0) , \c(1) \in \bD$ depend on the parameterized curve $\c \in \gS$. 

\smallskip 

More precisely, given $\gS$ an admissible set of parameterized curves for $\cD$,
for each $x \in \bD$ and $p \in \cD$, there is a unique $\c \in \gS$ 
such that $x = \c(0)$ and $p \in \c([0 , 1])$ according to point~(3) in Definition~\ref{def:admissible}.
Setting $\s(x , p) := \c(1)$, we then get a map $\s : \bD \times \cD \to \bD$
we will call the `end point map' associated with $\gS$ (see Figure~\ref{fig:end-point-1}).

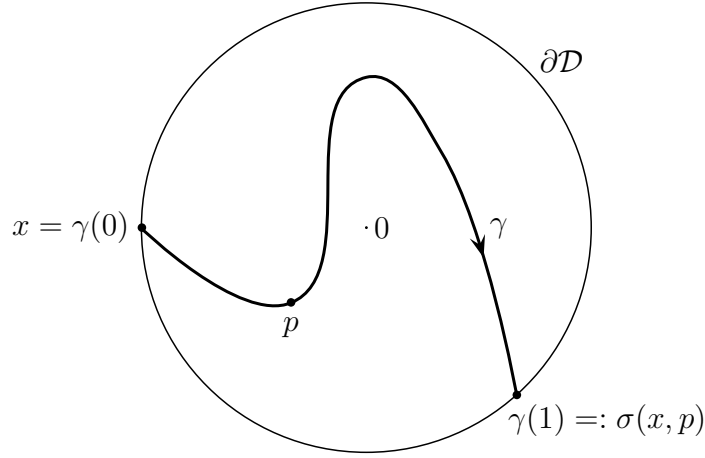
\begin{figure}[h] 

\begin{pspicture}(8,6.5) 
   \psdots[dotstyle=*,dotscale=0.3](4,3) 
   \uput{0.1}[0](4,3){$0$} 
   \pscircle[linewidth=0.02](4,3){3} 
   \uput{0.1}[45](6.236,5){$\bD$} 
   \psdots[dotstyle=*,dotscale=0.9](1.015,3)(3,2.005)(6,0.775) 
   \uput{0.15}[180](1,3){$x = \c(0)$} 
   \uput{0.2}[270](3,2){$p$} 
   \uput{0.12}[310](6,0.764){$\c(1) =: \s(x , p)$} 
   \pscurve[linewidth=0.04](1,3)(3,2)(4,5)(5,4)(6,0.764) 
   \psline[linewidth=0.08]{c->}(5.5153,2.75)(5.559,2.6) 
   \uput{0.2}[50](5.525,2.7){$\c$} 
\end{pspicture}  
   \caption{\label{fig:end-point-1} The `end point map' $\s$} 
\end{figure}

\medskip 

\begin{remark*}
For any $x , y \in \bD$ and $p \in \cD$, we obviously have: $\s(x , p) = y \iff \s(y , p) = x$.
\end{remark*}

\medskip 

\begin{definition} \label{def:property-C}
   An admissible set of parameterized curves $\gS$ for $\cD$ satisfies Arcostanzo's Property~(C) 
   if and only if 
   
   \begin{enumerate}
      \item the associated `end point map' $\s : \bD \times \cD \to \bD$ is $\Cl{\infty}$, and 

      \smallskip 

      \item for every $(x , p) \in \bD \times \cD$ and every $v \in T_{p}\cD = \Rn{2}$ with $v \neq 0$,
      we have the equivalence
      $$
      \pder{\s}{p}(x , p) \act v = 0 
      \iff 
      v \ \mbox{and} \ \c'(t) \ \mbox{are parallel vectors},
      $$
      where $(\c , t)$ is the unique element in $\gS \times (0 , 1)$ 
      such that $x = \c(0)$ and $p = \c(t)$ according to point~(3) in Definition~\ref{def:admissible}
      (with $t_{0} = 0$ and $t_{1} = t$).
   \end{enumerate}
\end{definition}

\medskip 

\begin{remark} \label{rem:property-C}
Point~(2) in Definition~\ref{def:property-C} can be reformulated in another way: 

\smallskip 

For every $(x , p) \in \bD \times \cD$ and every $v \in T_{p}\cD = \Rn{2}$ with $v \neq 0$,
we have the equivalence
$$
\pder{\s}{p}(x , p) \act v = 0 
\iff 
(x = e^{-}(p , v) \ \ \mbox{or} \ \ x = e^{+}(p , v)),
$$
where $e^{-}(p , v) := \c(0)$ and $e^{+}(p , v) := \c(1)$
if $\c$ denotes the unique parameterized curve in $\gS$ given by point~(4) 
in Definition~\ref{def:admissible}.
\end{remark}

\medskip 

Arcostanzo points out, for example, that the set of maximal geodesics in
$\bar{\cD}$ (reparameterized by $[0 , 1]$) of a negatively curved Riemannian metric on an open
neighborhood of $\bar{\cD}$ is admissible for $\cD$ 
and satisfies Property~(C) (see~\cite{Arc94}, page~242).

\bigskip

We can now state Arcostanzo's result:

\begin{theorem} [\cite{Arc94}, Th\'{e}or\`{e}me~2, page~243] \label{thm:Arcostanzo}
   Let $\gS$ be an admissible set of parameterized curves for $\cD$ that satisfies Property~(C).
   Then there exists a \emph{unique} Finsler metric $\sF$ on $\cD$ 
   whose associated distance function on $\cD$ 
   extends to a distance on $\bar{\cD}$ that induces the metric $d_{g_{0}}$ on $\bD$ 
   and such that $\{ \rest{\c}{(0 , 1)} \st \c \in \gS \}$ 
   coincides with the set of maximal geodesics of $\sF$ after reparametrization by $(0 , 1)$. 
   Its precise formula is given by 
   $$
   \sF(p , v) 
   = 
   \frac{1}{4} \! \int{\bD}{}{\ \ppder{d_{g_{0}}}{x}{y}(x , \s(x , p)) \! 
   \left| \pder{\s}{p}(x , p) \act v \right| \!\!}{x}
   $$
   for any $(p , v) \in T\cD = \cD \times \Rn{2}$, 
   where $|\cdot|$ stands for the canonical Euclidean norm on $\Rn{2}$ 
   and $\mathrm{d} x$ denotes the canonical measure on the Euclidean circle $\bD = R \Sn{1}$. \\
   Moreover, this Finsler metric is smooth.
\end{theorem}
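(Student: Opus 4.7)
The overall approach is Crofton-geometric. The formula defining $\sF(p,v)$ should be read as an infinitesimal count of geodesics of $\gS$ crossing the segment $[p,\,p+\e v]$, weighted by an appropriate measure on the space of geodesics. The plan is to identify $\gS$, modulo swapping endpoints, with the open submanifold of $\bD \times \bD$ of distinct pairs via $\c \mapsto (\c(0), \c(1))$, and to equip it with the positive measure
\begin{equation*}
   dm(x,y) := \tfrac{1}{4}\ppder{d_{g_{0}}}{x}{y}(x,y)\, dx\, dy.
\end{equation*}
Positivity of the density for the hyperbolic chord distance is the classical Santal\'{o}/Crofton fact on $\Hn{2}$, and the factor $\tfrac{1}{4}$ compensates for the double cover $(x,y) \leftrightarrow (y,x)$ together with a standard Crofton normalisation.

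First I would verify that $v \mapsto \sF(p,v)$ is a norm on $T_p \cD$: positive homogeneity is immediate, subadditivity follows from $|\cdot|$ being a norm on $\Rn{2}$, and non-degeneracy uses Remark~\ref{rem:property-C}, which ensures that $\pder{\s}{p}(x,p) \act v$ vanishes only at the two exceptional boundary points $e^{\pm}(p,v)$, a set of $dx$-measure zero. The fundamental step is then the Crofton identity
\begin{equation*}
   \oldint_{a}^{b} \sF(\gamma(t), \gamma'(t))\, dt \; = \; \oldint_{\gS} N(\gamma, \c)\, dm(\c),
\end{equation*}
valid for smooth curves $\gamma : [a,b] \to \cD$ crossing elements of $\gS$ transversally, where $N(\gamma,\c)$ counts the intersection points; the proof is a change of variables $(x,t) \mapsto (x, \s(x,\gamma(t)))$ on $\bD \times [a,b]$ whose Jacobian produces exactly the integrand of $\sF$. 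From this identity three properties fall out in succession: \textbf{(a)} since two distinct elements of $\gS$ meet at most once, the right-hand side is minimised, among curves joining two given boundary points, by the unique member of $\gS$ connecting them, so elements of $\gS$ are $\sF$-length minimisers and hence $\sF$-geodesics; \textbf{(b)} applying the Crofton identity to $\c_{x,y} \in \gS$ itself reduces $L_{\sF}(\c_{x,y})$ to $\tfrac{1}{4}$ times the integral of $\ppder{d_{g_{0}}}{x}{y}$ over boundary pairs interleaving $(x,y)$, which is the classical hyperbolic Crofton formula and evaluates to $d_{g_{0}}(x,y)$, yielding the boundary metric property; \textbf{(c)} smoothness of $\sF$ on $T\cD$ and of $\sF^{2}$ off the zero section follows from Property~(C)(1), smoothness of $d_{g_{0}}$ off the diagonal, and the fact that the kernel of $\pder{\s}{p}(x,p)$ consists of finitely many regular values of $x$. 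Finally, uniqueness is immediate: any other Finsler metric with geodesics $\gS$ and boundary distance $d_{g_{0}}$ satisfies the same Crofton identity against the same measure $m$ (which is forced by step~(b)), so must coincide with $\sF$ pointwise.

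The main obstacle is the Crofton identity itself. Converting the one-dimensional boundary integral in the definition of $\sF$ into a genuine two-dimensional integral on the space of geodesics requires showing that the map $(x,t) \mapsto (x, \s(x,\gamma(t)))$ is a local diffeomorphism between $\bD \times [a,b]$, minus a discrete set, and an open region of $\gS$; it is precisely Property~(C)(2) that guarantees the non-vanishing of the Jacobian away from these exceptional parameters, and careful orientation bookkeeping on the circle $\bD$ — together with the double counting of unoriented geodesics — explains the numerical factor $\tfrac{1}{4}$. Once this identity is established, every other property in the statement follows from classical computations on $\bD$.
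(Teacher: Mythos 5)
The first thing to observe is that the paper does not itself prove the bulk of this statement: Theorem~\ref{thm:Arcostanzo} is quoted from Arcostanzo's article, and the only portion the authors actually prove is the final sentence (``this Finsler metric is smooth''), which is the content of Remark~\ref{rem:smooth}. Your Crofton-measure reconstruction --- putting the density $\frac{1}{4}\ppder{d_{g_{0}}}{x}{y}(x,y)\,\mathrm{d}x\,\mathrm{d}y$ on the space of curves of $\gS$ identified with pairs of boundary points, proving the length-equals-weighted-intersection-count identity by the change of variables $t \mapsto \s(x,\gamma(t))$ whose Jacobian is $\bigl|\pder{\s}{p}(x,\gamma(t))\act\gamma'(t)\bigr|$, and deducing minimality of the members of $\gS$ from the fact that two of them meet at most once --- is exactly the integral-geometric strategy of the cited source, and your step (b) (the iterated integral of the mixed derivative over separating boundary pairs evaluates to $4\,d_{g_{0}}(x_{0},y_{0})$, cancelling the $\frac{1}{4}$) correctly recovers the boundary distance. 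Two points in the quoted part are nonetheless under-justified: the uniqueness claim (``forced by step (b)'') needs the further observation that the prescribed geodesic system together with the boundary distance determines \emph{all} interior distances, by additivity of the distance along each curve of $\gS$ between its two endpoints on $\bD$; and the assertion that \emph{every} maximal geodesic of $\sF$ belongs to $\gS$ does not follow from minimality of the members of $\gS$ alone, since $\sF$ is not assumed strongly convex and uniqueness of geodesics with given initial data is therefore not automatic.

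Where your sketch has a genuine gap relative to what the paper proves is precisely the smoothness clause. The integrand $x \mapsto \bigl|\pder{\s}{p}(x,p)\act v\bigr|$ fails to be differentiable in $(p,v)$ exactly where the vector inside the Euclidean norm vanishes, namely at $x = e^{-}(p,v)$ and $x = e^{+}(p,v)$ by Remark~\ref{rem:property-C}; the fact that this exceptional set is finite (hence of $\mathrm{d}x$-measure zero) is \emph{not} by itself a licence to differentiate under the integral sign, because the exceptional points move with $(p,v)$. The paper's Remark~\ref{rem:smooth} supplies the missing step: locally uniform bounds on the derivatives of $|\Ups|$ off the exceptional set, obtained from compactness of $\bD \times \bar{\cD(r)}$ and linearity of $\Ups$ in $v$, followed by Lebesgue's dominated convergence theorem, iterated to all orders of differentiation. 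Your clause (c) should be replaced by (or expanded into) an argument of that type.
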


\medskip 

\begin{remark} \label{rem:Arcostanzo}
~
\begin{enumerate}
   \item Given $x_{0} = R e^{i t_{0}}$ and $y_{0} = R e^{i s_{0}}$ in $\bD = R \Sn{1}$ 
   such that $x_{0} \neq y_{0}$,
   the partial derivative $\disp \ppder{d_{g_{0}}}{x}{y}(x_{0} , y_{0})$
   is defined as to be equal to $\disp \ppder{f}{t}{s}(t_{0} , s_{0}) \in \RR$,
   where $f(t , s) = d_{g_{0}}(R e^{i t} , R e^{i s})$ for all $t , s \in \RR$.

   \medskip

   \item Arcostanzo's result applies here since $g_{0}$ is the hyperbolic metric and an easy computation 
   then shows that $\disp \ppder{d_{g_{0}}}{x}{y}(x , y) > 0$ for any $x , y \in \bD$ with $x \neq y$.

   \medskip

   \item By uniqueness of $\sF$ in Theorem~\ref{thm:Arcostanzo}, if we choose $\gS$ to be the set 
   of maximal geodesics of $g_{0}$ in $\bar{\cD}$ after reparametrization by $[0 , 1]$ 
   (whose images are the chords of the Euclidean circle $\bD$), 
   then we get that $\sF$ equals the restriction to $T\cD$ of the Finsler metric $F_{0}$ on $\Hn{2}$ 
   associated with $g_{0}$. 
   
   \medskip
   
   \item If we choose $\gS$ to be the set of maximal geodesics in $\bar{\cD}$ 
   (reparameterized by $[0 , 1]$) of a negatively curved Riemannian metric 
   on an open neighborhood of $\bar{\cD}$ (the set $\gS$ is then admissible for $\cD$ 
   and satisfies Property~(C) as shown by Arcostanzo in~\cite{Arc94}, page~242), 
   the unique Finsler metric on $\cD$ we get by Theorem~\ref{thm:Arcostanzo} is \emph{not} Riemannian. 
   
   \medskip

   \item The existence of a unique Finsler metric $\sF$ on $\cD$ 
   given by the formula in Theorem~\ref{thm:Arcostanzo}
   still holds without the assumption that the admissible set of parameterized curves $\gS$ 
   for $\cD$ has Property~(C), but in that case $\sF$ is \emph{not necessarily} reversible nor smooth.
\end{enumerate}
\end{remark}

\medskip 

\begin{remark} \label{rem:smooth}
Although it is not written in~\cite{Arc94}, the fact that $\sF$ is a \emph{smooth} Finsler metric on $\cD$ 
can be proved as follows.

\medskip 

\begin{proof}~\\ 
\indent Consider the map $\Ups : \cD \times \Rn{2} \times \bD \to \Rn{2}$ 
defined by 
$$
\Ups((p , v) , x) 
:= 
\frac{1}{4} \ppder{d_{g_{0}}}{x}{y}(x , \s(x , p)) \pder{\s}{p}(x , p) \act v.
$$

\smallskip 

Since $d_{g_{0}}$ is $\Cl{\infty}$ on $(\bD \times \bD) \setmin \{ (x , x) \st x \in \bD \}$ and 
$\s$ is $\Cl{\infty}$ on $\bD \times \cD$ (point~(1) in Definition~\ref{def:property-C}) 
which satisfies $\s(x , p) \neq x$ for all $(x , p) \in \bD \times \cD$, the positive function 
$\disp (x , p) \mapsto \ppder{d_{g_{0}}}{x}{y}(x , \s(x , p))$ is $\Cl{\infty}$ on $\bD \times \cD$, 
and therefore $\Ups$ is $\Cl{\infty}$. 

\smallskip 

This implies in particular that $\Ups$ is continuous, 
thus the function $\sF : T\cD = \cD \times \Rn{2} \to \RR$ in Theorem~\ref{thm:Arcostanzo} 
given by $\disp \sF(p , v) = \int{\bD}{}{|\Ups((p , v) , x)| \!}{x}$ is well defined and continuous. 

\smallskip 

On the other hand, for any $((p , v) , x) \in \cD \times (\Rn{2} \setmin \{ 0 \}) \times \bD$, 
the vector $\disp \pder{\s}{p}(x , p) \act v \in \Rn{2}$ vanishes 
iff $x = e^{-}(p , v) \ \mbox{or} \ x = e^{+}(p , v)$ (point~(2) in Definition~\ref{def:property-C}). 
So, given $(p , v) \in \cD \times (\Rn{2} \setmin \{ 0 \})$, the differential 
$\disp \pder{|\Ups|}{(p , v)}((p , v) , x) \in \LL{}(\Rn{4} , \RR)$ 
exists for all $x \in \bD \setmin \{ e^{-}(p , v) , e^{+}(p , v) \}$ 
and writes 
\begin{eqnarray*} 
   \pder{|\Ups|}{(p , v)}((p , v) , x) \act (w , \x) 
   & = & 
   \pder{|\Ups|}{p}((p , v) , x) \act w + \pder{\Ups}{v}((p , v) , x) \act \x \\ 
   & = & 
   \frac{\scal{\pder{\Ups}{p}((p , v) , x) \act w}{\Ups((p , v) , x)} 
   + \scal{\Ups(p , \x , x)}{\Ups((p , v) , x)}}{|\Ups((p , v) , x)|}
\end{eqnarray*}
for every $w , \x \in \Rn{2}$ (notice that $\Ups$ is linear with respect to $v$). 

\smallskip 

Therefore, if we fix $\tau > 0$ and take $0 < |v| \leq \tau$ 
together with $|w| \leq 1$ and $|\x| \leq 1$, we get 
\begin{eqnarray*}
   \left| \pder{|\Ups|}{(p , v)}((p , v) , x) \act (w , \x) \right| 
   & \leq & 
   \left| \pder{\Ups}{p}((p , v) , x) \act w \right| + |\Ups((p , \x) , x)| \\ 
   & & 
   \quad (\mbox{by Cauchy-Schwarz inequality}) \\   
   & \leq & 
   \tau \opnorm{\pder{\Ups}{p}((p , \cdot) , x)} + \norm{\Ups((p , \cdot) , x)}, 
\end{eqnarray*}
where $\norm{\cdot}$ and $\opnorm{\cdot}$ are respectively the operator norms 
on $\LL{}(\Rn{2} , \Rn{2})$ and $\LL{}_{2}(\Rn{2} \times \Rn{2} , \Rn{2})$ 
(bilinear maps from $\Rn{2} \times \Rn{2}$ to $\Rn{2}$).

\smallskip 

Since $\Ups$ is $\Cl{\infty}$, the functions 
$\disp (x , p) \mapsto \norm{\Ups((p , \cdot) , x)}$ 
and $\disp (x , p) \mapsto \opnorm{\pder{\Ups}{p}((p , \cdot) , x)}$ 
are continuous on $\bD \times \cD$. 
So, given any $r \in (0 , R)$, the compactness of $\bD \times \bar{\cD(r)}$ implies  
that there exist positive constants $\L_{1}$ and $\L_{2}$ such that
$$
\norm{\Ups((p , \cdot) , x)} \leq \L_{1} 
\quad \mbox{and} \quad 
\opnorm{\pder{\Ups}{p}((p , \cdot) , x)} \leq \L_{2}
$$
for all $(x , p) \in \bD \times \cD(r)$. 

\smallskip 

Conclusion: for any $p \in \cD(r)$, $v \in \Rn{2}$ such that $0 < |v| \leq \tau$, 
$x \in \bD \setmin \{ e^{-}(p , v) , e^{+}(p , v) \}$ 
and $w , \x \in \Rn{2}$ with $|w| \leq 1$ and $|\x| \leq 1$, 
we have 
$$
\left| \pder{|\Ups|}{(p , v)}((p , v) , x) \act (w , \x) \right| 
\leq 
\tau \L_{1} + \L_{2}. 
$$

\smallskip 

Now, since $\{ e^{-}(p , v) , e^{+}(p , v) \}$ is a set of zero measure 
with respect to $\mathrm{d} x$, we obtain from Lebesgue's dominated convergence theorem 
(see for example~\cite{Die68}, page~123) 
that the Finsler metric $\sF$ on $\cD$ 
is differentiable on $\cD(r) \times \{ v \in \Rn{2} \st 0 < |v| < \tau \}$. 

\smallskip 

As this holds for arbitrary $r \in (0 , R)$ and $\tau > 0$, we eventually get that 
$F$ is differentiable on $\cD \times (\Rn{2} \setmin \{ 0 \}) = T\cD \setmin \{ 0 \}$ 
with 
$$
\disp \pder{\sF}{(p , v)}(p , v) = \int{\bD}{}{\ \pder{|\Ups|}{(p , v)}((p , v) , x) \!}{x}. 
$$

\smallskip 

Finally, using the same reasonning as above, one can show by induction 
that for every $n \in \NN$ the Finsler metric $\sF$ is $n$ times differentiable 
on $T\cD \setmin \{ 0 \}$ with 
$$
\disp \frac{\partial^{n} \sF}{\partial (p , v)^{n}}(p , v) 
= 
\int{\bD}{}{\ \frac{\partial^{n} |\Ups|}{\partial (p , v)^{n}}((p , v) , x) \!}{x}.
$$ 
This proves that $\sF$ is smooth. 
\end{proof} 
\end{remark}

\bigskip

Now, using Theorem~\ref{thm:Arcostanzo}, 
our aim is to construct a `good' family $\fam{\gS}{\l}{(-\e , \e)}$ of admissible sets 
of parameterized curves for $\cD$ satisfying Property~(C), from which we shall 
later be able to get a family $\fam{\sF}{\l}{(-\e , \e)}$ of Finsler metrics on $\Hn{2}$ 
as needed in the Main Theorem. 
But, as we already mentionned, the main difficulty will be 
to ensure these Finsler metrics be $\emph{smooth}$ on the \emph{whole} space $\Hn{2}$ 
(and not only on the disk $\cD$) and \emph{coincide} with the Riemannian hyperbolic metric $g_{0}$ 
\emph{outside} $\cD$. 
Given any $\l \in (-\e , \e)$ and according to Theorem~\ref{thm:Arcostanzo}, 
it seems reasonable to ask all the parameterized curves in $\gS_{\l}$ to coincide 
with the geodesics for $g_{0}$ in a neighborhood of $\bD$ (note that since $(\Hn{2} , g_{0})$ 
has been chosen to be the Klein model of the hyperbolic plane, the images of the $g_{0}$-geodesics 
are affine segments, thus very easy to be dealt with). However, we also want the Finsler metric 
$\sF_{\l}$ \emph{not} to be Riemannian, and this will be the case if we choose $\gS_{\l}$ 
to be the set of the parameterized curves obtained as a `barycenter' of the geodesics for $g_{0}$ 
and the geodesics for some `good' Riemannian metric $g_{\l}$ conformal to $g_{0}$. 

\medskip

The advantage in constructing a family $\fam{\gS}{\l}{(-\e , \e)}$ in this way 
is that all the Finsler metrics of the associated family $\fam{\sF}{\l}{(-\e , \e)}$ 
obtained by Arcostanzo's theorem will satisfy the desired properties listed in the Main Theorem, 
but this construction will have a cost. 
Indeed, proving that the set $\gS_{\l}$ of parameterized curves 
is \emph{admissible} for $\cD$ and \emph{has} Property~(C) is not easy 
and will be done at the expense of great effort. 
This is why we will have to make very technical considerations just in order 
to ensure admissibility and Property~(C) for the family $\fam{\gS}{\l}{(-\e , \e)}$. 

\bigskip
\bigskip 

\subsection{Constructing a family of admissible sets of parameterized curves}
~

\medskip 


\subsubsection{\textsf{\emph{The setting}}} 
~

\smallskip 

We will now construct a family of admissible sets of parameterized curves for $\cD$ by interpolating
between the maximal geodesics for the hyperbolic metric $g_{0}$ on $\Hn{2}$
and those for a nearby Riemannian metric of non-constant curvature 
that is conformal to $g_{0}$. 

\medskip 

More precisely, let $\D$ be the Laplacian for $g_{0}$ and fix  
a regular eigenfunction $\p : \bar{\cD} \to \RR$ of $\D$ on $\bar{\cD}$
associated with the first eigenvalue $a$ of $\D$ and satisfying 
the Dirichlet condition $\rest{\p}{\bD} \equiv 0$.
It is then well known that $a > 0$ and that $\p$ can be chosen to be positive on $\cD$.
Furthermore, as $g_{0}$ is invariant under the group $\O(\Rn{2})$ of linear Euclidean isometries 
(\emph{i.e.}, $A^{*} g_{0} = g_{0}$ for all $A \in \O(\Rn{2})$) thanks to Equation~\ref{equ:Klein}, 
we get that $\p$ is $\O(\Rn{2})$-invariant. 

\medskip 

Next, let $\t : \Hn{2} \to \RR$ be any $\Cl{\infty}$ function that is $\O(\Rn{2})$-invariant
and such that $\t \equiv 1$ on $\bar{\cD(R / 4)}$ and $\t \equiv 0$ on $\Hn{2} \setmin \cD(R / 2)$. 
The new function $f : \Hn{2} \to \RR$ defined by 
$$ 
f(p) =
\begin{cases}
   \p(p) \t(p) & \mbox{if} \ p \in \cD \\
   0 & \mbox{if} \ p \in \Hn{2} \setmin \cD
\end{cases}
$$
is thus $\Cl{\infty}$ and $\O(\Rn{2})$-invariant, 
together with $f \equiv 0$ on $\Hn{2} \setmin \cD(R / 2)$ and $\D f = a f$ on $\cD(R / 4)$. \\
In particular, since $a > 0$ and $\p$ is positive on $\cD$, 
there exists a number $\d_{0} > 0$ such that 
$(\D f)(p) \geq 1 / \d_{0}^{2}$ for all $p \in \cD(R / 4)$.

\medskip 

\begin{proposition} \label{prop:conformal}
   The function
   $$
   \map{\a :}{(-\d_{0} , \d_{0}) \times \Hn{2}}{\RR}{(\l ,p)}{\a(\l , p) = \a_{\l}(p) := e^{2 \l^{2} f(p)}}
   $$
   is $\Cl{\infty}$ and satisfies the following:

   \begin{enumerate}
      \item $\a_{0} \equiv 1$; 

      \smallskip 

      \item for all $\l \in (-\d_{0} , \d_{0})$ and $p \in \Hn{2} \setmin \cD(R / 2)$, 
      we have $\a_{\l}(p) = 1$; and 

      \smallskip 

      \item for all $\l \in (-\d_{0} , \d_{0})$ with $\l \neq 0$, 
      the Riemannian metric $g_{\l} : \Hn{2} \to \mathrm{Sym}_{2}(T\Hn{2})$
      defined by $g_{\l}(p) = \a_{\l}(p) g_{0}(p)$ is $\Cl{\infty}$, complete, 
      and has \emph{non-constant negative} Gaussian curvature 
      on \emph{any} neighborhood about $0$ in $\Hn{2}$.
   \end{enumerate}
\end{proposition}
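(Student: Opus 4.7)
The plan is to dispatch the elementary parts of the proposition quickly and then focus on the Gaussian curvature claim, which is where the substance lies. For the smoothness of $\a$, it suffices to observe that $\a(\l, p) = e^{2\l^{2} f(p)}$ is the composition of the $\Cl{\infty}$ function $f : \Hn{2} \to \RR$ with the entire map $(\l, t) \mapsto e^{2\l^{2} t}$. Properties (1) and (2) are then immediate: $\a(0, p) = 1$, and since $f \equiv 0$ on $\Hn{2} \setmin \cD(R/2)$ one has $\a_{\l} \equiv 1$ there. The smoothness of $g_{\l} = \a_{\l} g_{0}$ is clear. Completeness follows from a bi-Lipschitz comparison: $\a_{\l}$ is continuous, strictly positive, and identically $1$ outside the relatively compact set $\cD(R/2)$, hence is bounded above and below by positive constants on all of $\Hn{2}$; so $g_{\l}$ is bi-Lipschitz equivalent to the complete metric $g_{0}$ and therefore complete as well.

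The core of the argument is the Gaussian curvature computation. I would apply the standard 2-dimensional conformal change formula $K_{e^{2u} g} = e^{-2u}(K_{g} - \Delta_{g}^{\mathrm{geom}} u)$, where $\Delta_{g}^{\mathrm{geom}}$ is the Laplace-Beltrami operator in the geometer's sign convention. Since the paper's $\D$ equals $-\Delta_{g_{0}}^{\mathrm{geom}}$ (the analyst's convention, whose first Dirichlet eigenvalue $a$ is positive), substituting $u = \l^{2} f$ and $K_{g_{0}} \equiv -1$ yields
\begin{equation*}
K_{g_{\l}}(p) \;=\; e^{-2\l^{2} f(p)} \bigl(-1 + \l^{2} \D f(p)\bigr).
\end{equation*}
On the disk $\cD(R/4)$ we have $\t \equiv 1$, so $f = \p$ and $\D f = a\p$, and the formula reduces to $K_{g_{\l}}(p) = e^{-2\l^{2} \p(p)}(-1 + \l^{2} a \p(p))$ for $p \in \cD(R/4)$.

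From this explicit formula, non-constancy on any neighborhood of $0$ follows because the first Dirichlet eigenfunction $\p$ of an $\O(\Rn{2})$-invariant hyperbolic disk is radial and strictly monotone in $|p|$ (a standard Sturm-Liouville fact applied to the radial eigenvalue ODE), so $K_{g_{\l}}$ inherits this non-constancy. For the negativity of $K_{g_{\l}}$, one needs $\l^{2} \D f < 1$ on the region in question; the delicate point here is the main obstacle, since the defining property $\D f \geq 1/\d_{0}^{2}$ furnishes a lower bound on $\d_{0}$, whereas the negativity condition at first sight calls for an upper bound. I would handle this by exploiting the freedom to rescale the eigenfunction $\p$ by a positive constant (and, if necessary, by a further shrinking of $\d_{0}$) so that both conditions hold simultaneously on a suitable neighborhood of $0$, after which the two desired qualitative features of $K_{g_{\l}}$ read off directly from the closed-form expression above.
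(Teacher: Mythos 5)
Your proposal is correct in substance and, at its core, takes the same route as the paper: apply the two\-/dimensional conformal change formula to $u=\l^{2}f$ and read the two curvature properties off the resulting closed form on the region where $f=\p$. The differences are worth recording. For completeness you use a global bi-Lipschitz comparison ($\a_{\l}$ is pinched between positive constants because it equals $1$ outside the relatively compact set $\cD(R/2)$), whereas the paper verifies the Hopf--Rinow criterion by splitting a closed bounded set into its intersection with $\bar{\cD}$ and with the region where $g_{\l}=g_{0}$; both work, and yours is the cleaner of the two. Your sign bookkeeping is also the one consistent with the paper's convention that $\D$ has positive first eigenvalue: then indeed $K_{g_{\l}}=e^{-2\l^{2}f}(-1+\l^{2}\D f)$, negativity requires the \emph{upper} bound $\l^{2}\D f<1$, and the correct fix is exactly what you propose -- bound $\D f=a\p$ above by compactness of $\bar{\cD(R/4)}$ and shrink $\d_{0}$ accordingly (rescaling $\p$ is harmless but not needed). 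The paper's displayed formula $K_{\l}=-(1+\l^{2}\D f)e^{-2\l^{2}f}$ carries the opposite sign, which is why its stated bound on $\D f$ goes in the other direction; you were right to distrust it. Finally, for non-constancy the paper does not invoke radial monotonicity of $\p$: it only needs that $f$ is non-constant on $\bar{\cD(r)}$, which it gets for free (a constant $f$ would force $\D f=0$, hence $af=0$, hence $f=0$, contradicting $\p>0$ on $\cD$), so that $f(\bar{\cD(r)})$ is a nondegenerate interval. Your Sturm--Liouville argument reaches the same conclusion with heavier machinery.

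The one genuine, if small, gap is the sentence claiming that $K_{g_{\l}}$ ``inherits'' the non-constancy of $\p$. Non-constancy does not automatically pass through the composition $K_{g_{\l}}=h\circ\p$ with $h(t)=e^{-2\l^{2}t}(-1+a\l^{2}t)$: you must also rule out that $h$ is constant on the interval of values taken by $\p$ on the given neighborhood of $0$. This is precisely the step the paper makes explicit -- assuming constancy and differentiating the resulting identity in $t$ gives $a\l^{2}/(1\mp a\l^{2}t)=\pm 2\l^{2}$ for all $t$ in a nondegenerate interval, which is impossible for $\l\neq 0$ and $a\neq 0$. Since $h$ is real-analytic with at most one critical point, the fix is a single line, but it has to be said.
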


\medskip

\begin{proof}~\\ 
\indent The only two things to be proved deal with completeness and Gaussian curvature,
since all the other points are clear. 

\smallskip 

So, fix $\l \in (-\d_{0} , \d_{0}) \setmin \{ 0 \}$.

\medskip

$\bullet$ \textsf{Step~1:} 
To prove $g_{\l}$ is complete, we will use the Hopf-Rinow theorem. 

\smallskip 

Let $X$ be a closed set in $\Hn{2}$ that is bounded for $g_{\l}$, and prove it is compact. 

\smallskip 

We have $X = X_{1} \cup X_{2}$ with $X_{1} = X \cap \bar{\cD}$ and $X_{2} = X \cap (\Hn{2} \setmin \cD)$.
As $X_{1}$ is closed in the compact set $\bar{\cD}$, it is compact. 

\smallskip 

On the other hand, $X_{2}$ is included in the open set $(\Hn{2} \setmin \bar{\cD(R / 2)})$ of $\Hn{2}$
on which $g_{\l}$ coincide with $g_{0}$. So, $X_{2}$ is bounded for $g_{0}$, and hence compact
since the Klein metric $g_{0}$ is complete.

\smallskip 

Conclusion: $X = X_{1} \cup X_{2}$ is compact. 

\medskip

$\bullet$ \textsf{Step~2:}  
The Gaussian curvature $K_{\l}$ of the metric $g_{\l}$
depends on that $K_{0} \equiv -1$ of $g_{0}$ according to 
the formula $\a_{\l} K_{\l} = K_{0} - \D(\ln{\! (\a_{\l})}) / 2$
(see for example~\cite{GKM75}, page~97), 
which implies $K_{\l}(p) = -(1 + \l^{2} (\D f)(p)) e^{-2 \l^{2} f(p)}$ for all $p \in \Hn{2}$. 
Thus, for every $p \in \cD(R / 4)$, we have $K_{\l}(p) < 0$ 
since $1 + \l^{2} (\D f)(p) \geq 1 - \l^{2} / \d_{0}^{2} > 0$.

\smallskip 

On the other hand, given $r \in (0 , R / 4)$, if $K_{\l}$ were constant on $\bar{\cD(r)}$,
there would exist $C \in \RR$ such that for all $p \in \bar{\cD(r)}$,
$$
\ln{\! (1 + \l^{2} (\D f)(p))} = 2 \l^{2} f(p) + C,
$$
and hence 
\begin{equation} \label{equ:condition-1}
   \ln{\! (1 + a \l^{2} f(p))} = 2 \l^{2} f(p) + C
\end{equation}
since $\D f = a f$ on $\bar{\cD(r)} \inc \cD(R / 4)$ by construction of $f$. 

\smallskip 

Defining $t_{0} := \min{\! \{ f(p) \st p \in \bar{\cD(r)} \}}$ 
and $t_{1} := \max{\! \{ f(p) \st p \in \bar{\cD(r)} \}}$,
Equation~\ref{equ:condition-1} writes
\begin{equation} \label{equ:condition-2}
   \ln{\! (1 + a \l^{2} t)} = 2 \l^{2} t + C
\end{equation}
for all $t \in [t_{0} , t_{1}] = f(\bar{\cD(r)})$. 

\smallskip 

Since the function $f$ coincides with $\p > 0$ on $\cD(R / 4)$, 
it never vanishes on $\bar{\cD(r)}$, and hence it cannot be constant on $\bar{\cD(r)}$ 
(indeed, if $f$ were constant on $\bar{\cD(r)}$, 
then we would have $f = (\D f) / a \equiv 0$ on $\bar{\cD(r)} \inc \cD(R / 4)$). 
Therefore we have $t_{0} < t_{1}$, which makes sense to differentiate 
Equation~\ref{equ:condition-2} with respect to $t$ and get 
$$
\frac{a \l^{2}}{1 + a \l^{2} t} = 2 \l^{2}
$$
for all $t \in [t_{0} , t_{1}]$. 

But this is impossible since $a \neq 0$ and $\l \neq 0$. 

\smallskip 

Conclusion: the Gaussian curvature $K_{\l}$ cannot be constant on $\bar{\cD(r)}$. 
\end{proof}

\bigskip

Let us now show how we use Proposition~\ref{prop:conformal} to construct a family 
$\fam{\gS}{\l}{(-\d_{0} , \d_{0})}$ of sets of parameterized curves $\c : [0 , 1] \to \Rn{2}$ 
we will prove later they are admissible for $\cD$ and have Property~(C).  

\medskip 

For each $\l \in (-\d_{0} , \d_{0})$ and $x \in \Hn{2}$, denote by 
$\exp_{x}^{\l} : T_{x}\Hn{2} = \Rn{2} \to \Hn{2}$ the exponential map at $x$ associated with $g_{\l}$, 
and let $\exp^{\l} : T\Hn{2} = \Hn{2} \times \Rn{2} \to \Hn{2} \times \Hn{2}$ 
be defined by $\exp^{\l}(x , v) = (x , \exp_{x}^{\l}(v))$. 
Since $g_{\l}$ is negatively curved, it has no conjugate points and thus 
$\exp^{\l}$ is a $\Cl{\infty}$ diffeomorphism. In particular, $g_{\l}$ is uniquely geodesic.

\medskip 

We next fix a $\Cl{\infty}$ function $\r : \RR \to [0 , 1]$ such that 
\begin{equation} \label{equ:rho} 
\begin{split}
   (1) \ & \ \r \equiv 1 \ \mbox{on} \ [1 / 2 , 2 / 3]; \\ 
   (2) \ & \ \r \equiv 0 \ \mbox{on} \ [3 / 4 , +\infty); \ \mbox{and} \\ 
   (3) \ & \ \r(t) = \r(1 - t) \ \mbox{for all} \ t \in \RR. 
\end{split}
\end{equation}

\medskip 

Given any $\l \in (-\d_{0} , \d_{0})$ and $x \in \Hn{2}$, let $G_{x}^{\l} : \Hn{2} \times \RR
\to \Hn{2}$ and $\f_{x}^{\l} : \Hn{2} \times \RR \to \Rn{2}$ be defined by
$$
G_{x}^{\l}(y , t) := \exp^{\l}(x \, , \, t (\exp_{x}^{\l})^{-1}(y))
$$
and
$$
\f_{x}^{\l}(y , t) := (1 - \r(t)) G_{x}^{0}(y , t) + \r(t) G_{x}^{\l}(y , t).
$$

\smallskip 

Roughly speaking, we obtain the parameterized curve $\f_{x}^{\l}(y , \cdot) : \RR \to \Rn{2}$ 
as the `barycenter' in $\Rn{2}$ with `weights' $1 - \r$ and $\r$ of the unique maximal geodesics 
$G_{x}^{0}(y , \cdot)$ and $G_{x}^{\l}(y , \cdot)$ for $g_{0}$ and $g_{\l}$ respectively 
passing through $x$ at $t = 0$ and $y$ at $t = 1$ (see Figure~\ref{fig:barycenter}). 

\begin{figure}[h] 

\begin{pspicture}(8,6.5) 
   \psdots[dotstyle=*,dotscale=0.3](4,3) 
   \uput{0.1}[90](4,3){$0$} 
   \pscircle[linewidth=0.02](4,3){3} 
   \uput{0.1}[45](6.236,5){$\bD$} 
   \pscircle[linewidth=0.01](4,3){1.5} 
   \uput{1.5}[70](4,3){$\bD(\! R / 2)$} 
   \psline[linewidth=0.04,linestyle=dotted](0,2)(8,2) 
   \psline[linewidth=0.06]{c->}(2.605,2)(2.72,2) 
   \uput{0.15}[260](2.4,2){$G_{x}^{0}(y , \cdot)$} 
   \newgray{g5}{0.5} 
   \pscurve[linewidth=0.03,linecolor=g5](0.2,1.203)(1.172,2)(3,3.5) 
   \pscurve[linewidth=0.03,linecolor=g5](5,3.5)(6.828,2)(7.8,1.203) 
   \psline[linewidth=0.07,linecolor=g5]{c->}(1.85,2.557)(1.965,2.65) 
   \uput{0.18}[115]{39}(1.9,2.6){$G_{x}^{\l}(y , \cdot)$} 
   \psdots[dotstyle=*,dotscale=0.9](1.182,2)(6.818,2) 
   \uput{0.2}[140](1.172,2){$x$} 
   \uput{0.2}[40](6.828,2){$y$} 
   \psline[linewidth=0.04](0,2)(1.6,2) 
   \pscurve[linewidth=0.04](1.5,2)(1.55,2)(1.6,2)(1.65,2)(2,2.05)(2.5,2.4)(3.07,3.51)
   (3.5,3)(4,4)(4.5,3)(4.93,3.51)(5.5,2.4)(6,2.05)(6.35,2)(6.4,2)(6.45,2)(6.5,2) 
   \psline[linewidth=0.04](6.4,2)(8,2) 
   \psline[linewidth=0.08]{c->}(2.709,2.94)(2.749,3.1) 
   \uput{0.15}[310](2.7,3){$\f_{x}^{\l}(y , \cdot)$} 
\end{pspicture}  
   \caption{\label{fig:barycenter} Constructing $\f_{x}^{\l}(y , \cdot)$} 
\end{figure}

\bigskip 

In the rest of this section, we prove that if we shrink $\d_{0} > 0$,
then for each $\l \in (-\d_{0} , \d_{0})$, the
set of parameterized curves $\f_{x}^{\l}(y , \cdot) : [0 , 1] \to \Rn{2}$,
where $x$ and $y$ are distinct points in
$\bD$, is admissible for $\cD$ and satisfies Property~(C).

\medskip 

Then, in section~\ref{sec:end}, we prove
these parameterized curves have additional properties that will be used to ensure that the Finsler
metrics resulting from Theorem~\ref{thm:Arcostanzo} satisfy our Main Theorem. 

\bigskip 

In the following technical lemma, we show that for any $\l \in (-\d_{0} , \d_{0})$, 
if $C$ is a closed convex set in $\Rn{2}$ containing the open disk $\cD(R / 2)$, 
then $C \cap \Hn{2}$ is in some sense convex 
with respect to the set of parameterized curves
$G_{x}^{\l}(y , \cdot) : [0 , 1] \to \Hn{2}$ 
(respectively $\f_{x}^{\l}(y , \cdot) : [0 , 1] \to \Rn{2}$),
where $x , y \in C \cap \Hn{2}$.

\begin{lemma} \label{lem:basic-1}
   For each $\l \in (-\d_{0} , \d_{0})$, we have 
   
   \begin{enumerate} 
      \item $\f_{x}^{\l}(y , t) = \f_{y}^{\l}(x , 1 - t)$ for all $x , y \in \Hn{2}$ and $t \in \RR$, 

      \smallskip 

      \item if $C$ is any closed convex set in $\Rn{2}$ such that $\cD(R / 2) \inc C$,
      then $G_{x}^{\l}(y , t) \in C$ and $\f_{x}^{\l}(y , t) \in C$ 
      for all $x , y \in C \cap \Hn{2}$ and $t \in [0 , 1]$, 

      \smallskip 

      \item for all $x , y \in \bD$ and $t \in \RR$, 
      the equivalence $\f_{x}^{\l}(y , t) \in \cD \iff t \in (0 , 1)$ holds.
   \end{enumerate}
\end{lemma}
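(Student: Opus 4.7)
I plan to prove the three items in sequence, each building on the definitions and on properties of the weight function $\r$.

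For part~(1), I would use the reversibility of geodesics: for every $\l$ (including $\l = 0$) and every $x, y \in \Hn{2}$, the $g_{\l}$-geodesic joining $x$ to $y$ in time $1$ is, when time-reversed, the $g_{\l}$-geodesic joining $y$ to $x$, so $G_{x}^{\l}(y, t) = G_{y}^{\l}(x, 1-t)$ for all $t \in \RR$. Substituting this identity for both $\l$ and $0$ into the definition of $\f_{x}^{\l}(y, t)$ and using property~(3) of $\r$, namely $\r(t) = \r(1 - t)$, yields $\f_{x}^{\l}(y, t) = \f_{y}^{\l}(x, 1 - t)$ by direct computation.

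For part~(2), fix a closed convex $C \inc \Rn{2}$ with $\cD(R/2) \inc C$ and $x, y \in C \cap \Hn{2}$. The Klein-model curve $G_{x}^{0}(y, \cdot)$ traces the affine segment joining $x$ and $y$, which lies in $C$ by convexity. For $\c := G_{x}^{\l}(y, \cdot)$, I would argue by contradiction: suppose $\c(t_{0}) \notin C$ for some $t_{0} \in [0, 1]$; since $\c(0) = x$, $\c(1) = y$ lie in $C$ and $C$ is closed, there exists a maximal open subinterval $(a, b) \ni t_{0}$ on which $\c$ avoids $C$, with $\c(a), \c(b) \in C$. Because $C$ is closed and contains $\cD(R/2)$, it also contains $\bar{\cD(R/2)}$, so $|\c(t)| > R/2$ strictly on $(a, b)$; on the open set $\{p \in \Hn{2} \st |p| > R/2\}$ the conformal factor $\a_{\l}$ is identically $1$, hence $g_{\l} = g_{0}$ there, so $\c$ restricted to $(a, b)$ is a $g_{0}$-geodesic, i.e. an affine segment. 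By continuity this extends to an affine segment on $[a, b]$, and convexity of $C$ together with $\c(a), \c(b) \in C$ forces $\c([a, b]) \inc C$, contradicting $\c(t_{0}) \notin C$. Once both $G_{x}^{0}(y, t)$ and $G_{x}^{\l}(y, t)$ lie in $C$, the convex combination defining $\f_{x}^{\l}(y, t)$ lies in $C$ as well.

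For part~(3), the key intermediate claim is that when $x, y \in \bD$, one has $G_{x}^{\l}(y, t) \in \cD$ (open) for every $t \in (0, 1)$. Applying part~(2) with $C = \bar{\cD}$ places this point in $\bar{\cD}$. Were it on $\bD$ at some $t_{0} \in (0, 1)$, the geodesic $\c := G_{x}^{\l}(y, \cdot)$ would remain in $\bar{\cD}$ and touch $\bD$ from inside at $\c(t_{0})$; since $\c(t_{0}) \in \bD \inc \Hn{2} \setmin \cD(R/2)$, which is an open set on which $g_{\l} = g_{0}$, the curve $\c$ would locally coincide with an affine segment tangent to the Euclidean circle $\bD$ at $\c(t_{0})$; strict convexity of that circle would then force this tangent line to meet $\bar{\cD}$ only at $\c(t_{0})$, so $\c$ could not stay in $\bar{\cD}$ on either side of $t_{0}$, a contradiction. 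The same open-chord reasoning shows $G_{x}^{0}(y, t) \in \cD$ for $t \in (0, 1)$, so the convex combination $\f_{x}^{\l}(y, t)$ lies in $\cD$. Conversely, for $t \notin (0, 1)$ properties~(2) and~(3) of $\r$ give $\r(t) = 0$, hence $\f_{x}^{\l}(y, t) = G_{x}^{0}(y, t)$, which equals $x$ or $y$ at $t = 0, 1$ (on $\bD$, not in $\cD$) and for $t \notin [0, 1]$ sits on the affine extension of the chord past $x$ or $y$, outside the Euclidean disk of radius $R$.

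The main obstacle I foresee is the strict-interiority step in part~(3): the convex-combination trick alone only places $\f_{x}^{\l}(y, t)$ in $\bar{\cD}$, and promoting this to $\cD$ genuinely requires excluding the $g_{\l}$-geodesic from touching $\bD$ at any interior time. That exclusion rests squarely on the fact that $g_{\l} = g_{0}$ on an open neighborhood of $\bD$, forcing local affine behavior near any putative contact point, combined with strict Euclidean convexity of the boundary circle $\bD$.
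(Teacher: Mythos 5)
Your proposal is correct and follows essentially the same route as the paper: part (1) via uniqueness/reversibility of $g_{\l}$-geodesics plus the symmetry $\r(t)=\r(1-t)$; part (2) by contradiction on a maximal subinterval outside $C$, where $g_{\l}=g_{0}$ forces the arc to be affine and convexity closes the argument; and part (3) by combining $\r\equiv 0$ off $(0,1)$ with part (2) applied to $C=\bar{\cD}$ and a local-affineness contradiction at any putative boundary contact (the paper phrases this last step via extreme points of $\bar{\cD}$ rather than tangency, but it is the same argument).
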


\medskip

\begin{proof}~\\ 
\indent Fix $\l \in (-\d_{0} , \d_{0})$.

\medskip

$\bullet$ \textsf{Point~(1):} 
Given any $x, y \in \Hn{2}$, the parameterized curves $t \in \RR \mapsto G_{x}^{\l}(y , t) \in \Hn{2}$ 
and $t \in \RR \mapsto G_{y}^{\l}(x , 1 - t) \in \Hn{2}$ 
are both $g_{\l}$-geodesics passing through $x$ at $t = 0$ and $y$ at $t = 1$. 
They are thus equal since $g_{\l}$ is uniquely geodesic. 
Then point~$(1)$ follows from property~$(1)$ in Equation~\ref{equ:rho} satisfied by the function $\r$.

\medskip

$\bullet$ \textsf{Point~(2):} 
Let $C$ be a closed convex set in $\Rn{2}$ such that $\cD(R/2) \inc C$. 
Let $x , y \in C \cap \Hn{2}$, and consider the $g_{\l}$-geodesic $\k : \RR \to \Hn{2}$ 
defined by $\k(t) := G_{x}^{\l}(y , t)$. 

\smallskip

We shall now prove by contradiction that the image of $\k$ is included in $C$. 
Then, since the image of the $g_0$-geodesic $G_{x}^{0}(y , \cdot)$ is in $C$, 
we will get that the image of the interpolated curve $\f_{x}^{\l}(y , \cdot)$ is also in $C$. 
So, let us suppose that there exists $t_{0} \in [0 , 1]$ such that $p_{0} = \k(t_{0}) \notin C$ 
and prove this is not possible.

\smallskip

Let $\tau_{0} := \max{\! \{ t \in [0 , t_{0}] \st \k(t) \in C \}}$ 
and $\tau_{1} := \min{\! \{t \in [t_{0} , 1] \st \k(t) \in C \}}$ 
(note that $\tau_{0}$ and $\tau_{1}$ exist since $\k(0) = x \in C$ and $\k(1) = y \in C$). 

\smallskip 

Then $\tau_{0} < t_{0} < \tau_{1}$, and for all $t \in (\tau_{0} , \tau_{1})$ we have $\k(t) \notin C$, 
which implies that $\k((\tau_{0} , \tau_{1}))$ is the affine segment $]\k(\tau_{0}) , \k(\tau_{1})[$
since the metric $g_{\l}$ coincides with $g_{0}$ on the open set $\Hn{2} \setmin C$ of $\Hn{2}$
(recall images of the $g_{0}$-geodesics are affine segments). 
But $]\k(\tau_{0}) , \k(\tau_{1})[ \ \inc [\k(\tau_{0}) , \k(\tau_{1})] \inc C$ 
since $\k(\tau_{0}) , \k(\tau_{1}) \in C$ and $C$ is convex.
As $p_{0} = \k(t_{0}) \in \k((\tau_{0} , \tau_{1})) = \ ]\k(\tau_{0}) , \k(\tau_{1})[$,
we get a contradiction.  
So, $\k([0 , 1]) \inc C$.

\smallskip

On the other hand, the image of $[0 , 1]$ under the $g_0$-geodesic $G_{x}^{0}(y , \cdot)$ 
is the affine segment $[x , y]$, which lies in $C$ since $x , y \in C$ and $C$ is convex.

\smallskip

Finally, for all $t \in [0 , 1]$, the barycenter point 
$(1 - \r(t)) G_{x}^{0}(y , t) + \r(t) G_{x}^{\l}(y , t) = \f_{x}^{\l}(y , t)$ 
is contained in the convex set $C$. 

\medskip

$\bullet$ \textsf{Point~(3):} 
Let $x , y \in \bD$ such that $x \neq y$ (the case $x = y$ is trivial). 

\smallskip

To prove the $\imp$ part, we show that $\f_{x}^{\l}(y , t) \notin \cD$ 
for all $t \in \RR \setmin (0 , 1)$. 
The idea consists here in saying that if the parameterized curve $\f_{x}^{\l}(y , \cdot)$ 
leaves the disk $\cD$, then it is equal to a $g_{0}$-geodesic. 
Hence, since the image of any geodesic for $g_{0}$ is an affine segment, 
$\f_{x}^{\l}(y , \cdot)$ can never go back into $\cD$. 

\smallskip 

So, let $c : \RR \to \Hn{2}$ be the $g_{0}$-geodesic defined by $c(t) := G_{x}^{0}(y , t)$.
As the images of $g_{0}$-geodesics are affine segments, we can write $c(t) = x + \t(t) (y - x)$ 
for all $t \in \RR$, 
where $\t : \RR \to \RR$ is a $\Cl{\infty}$ function. 
Since $c$ is a regular parameterized curve (it is a non-constant geodesic for a Riemannian metric)
satisfying $c(0) = x$ and $c(1) = y$, the derivative of $\t$ never vanishes and
we have $\t(0) = 0$ and $\t(1) = 1$.
Therefore $\t$ is an increasing homeomorphism with $\t([0 , 1]) = [0 , 1]$.

\smallskip 

This implies that $c(\RR \setmin (0 , 1))$ is equal to the complement of the affine segment $]x , y[$
in the intersection of the straight line $(x y)$ with $\Hn{2}$.
Since $]x , y[$ is the intersection of $(x y)$ with $\cD$, 
we get the inclusion $c(\RR \setmin (0 , 1)) \inc \Hn{2} \setmin \cD$.

\smallskip 

But $\f_{x}^{\l}(y , t) = c(t)$ for all $t \in \RR \setmin (0 , 1)$ 
since $\r \equiv 0$ on $\RR \setmin (0 , 1)$ by property~(2) in Equation~\ref{equ:rho}, 
and thus $\f_{x}^{\l}(y , \cdot)(\RR \setmin (0 , 1)) = c(\RR \setmin (0 , 1)) \inc \Hn{2} \setmin \cD$. 

\smallskip

This establishes the $\imp$~part in point~(3).

\medskip

To prove the $\con$~part, let $\n : \RR \to \Hn{2}$ be the $g_{\l}$-geodesic 
defined by $\n(t) := G_{x}^{\l}(y , t)$. 

\smallskip 

Applying point~(2) with $C = \bar{\cD}$, we already have $\n((0 , 1)) \in \bar{\cD}$.
Then suppose there exists $t_{0} \in (0 , 1)$ such that $p_{0} = \n(t_{0}) \in \bD$ 
and prove this is not possible.

\smallskip 

Since $p_{0}$ lies in the open set $\Hn{2} \setmin \bar{\cD(R / 2)}$ of $\Hn{2}$,
the continuity of $\n$ at $t_{0}$ implies there exists $\e > 0$
such that $[t_{0} - \e , t_{0} + \e] \inc [0 , 1]$ 
and $\n([t_{0} - \e , t_{0} + \e]) \inc \Hn{2} \setmin \bar{\cD(R / 2)}$.
But $g_{\l}$ agrees with $g_{0}$ on $\Hn{2} \setmin \bar{\cD(R / 2)}$,
so $\n(t) = G_{x}^{0}(y , t)$ for all $t \in [t_{0} - \e , t_{0} + \e]$,
and thus $\n([t_{0} - \e , t_{0} + \e])$ is the affine segment $[\n(t_{0} - \e) , \n(t_{0} + \e)]$. 
Hence $\n(t_{0} - \e) , \n(t_{0} + \e) \in \bar{\cD}$ with $p_{0} \in [\n(t_{0} - \e) , \n(t_{0} + \e)]$, 
which is impossible since $p_{0} \in \bD$ is an extreme point for the convex set $\bar{\cD}$. 
This shows that $\n((0 , 1)) \inc \cD$.

\smallskip

On the other hand, the image of $(0 , 1)$ under the $g_0$-geodesic $G_{x}^{0}(y , \cdot)$ 
is the affine segment $]x , y[$, which lies in $\cD$ since $x , y \in \bD$ and $\cD$ is strictly convex.

\smallskip

Finally, for all $t \in (0 , 1)$, the barycenter point 
$(1 - \r(t)) G_{x}^{0}(y , t) + \r(t) G_{x}^{\l}(y , t) = \f_{x}^{\l}(y , t)$ 
is contained in the convex set $\cD$. 
\end{proof}

\bigskip

We now consider the $\Cl{\infty}$ map 
$\F : (-\d_{0} , \d_{0}) \times \bD \times \bD \times \RR \to \bD \times \Rn{2}$ defined by
$$
\F(\l , (x , y , t)) = \F_{\l}(x , y , t) := (x , \f_{x}^{\l}(y , t))
$$
and denote by $\D := \{ (x , x) \st x \in \bD \}$ the diagonal of $\bD \times \bD$.

\smallskip 

Using this map $\F$, we shall prove that for all $\l \in (-\d_{0} , \d_{0})$, the set 
$$
\gS_{\l} := \{ \c^{\l}_{(x , y)} \st (x , y) \in (\bD \times \bD) \setmin \D \}
$$
of $\Cl{\infty}$ parameterized curves $\c^{\l}_{(x , y)} : [0 , 1] \to \Rn{2}$
defined by $\c^{\l}_{(x , y)}(t) := \f_{x}^{\l}(y , t)$ is \emph{admissible} for $\cD$
and \emph{satisfies} Property~(C) provided $\d_{0} > 0$ is \emph{sufficiently} small.

\bigskip


\subsubsection{\textsf{\emph{Diffeomorphism property for $\F_{\l}$}}} 
~

\smallskip 

Let $\cT := \{ (x , p) \in \bD \times \Hn{2} \st p - x \in T_{x}\bD \} 
= \{ (x , p) \in \bD \times \Hn{2} \st \scal{x}{p - x} = 0 \}$, 
$M := ((\bD \times \bD) \setmin \D) \times (\RR \setmin \{ 0 \})$ 
and $N := (\bD \times \Hn{2}) \setmin \cT$.

\smallskip 

The aim of this section is to prove the following:

\begin{proposition} \label{prop:diffeo}
   ~
   \begin{enumerate}
      \item For every $\l \in (-\d_{0} , \d_{0})$, we have $\F_{\l}(M) \inc N$. 

      \smallskip 

      \item There is $a \in (0 , \d_{0})$ such that $\F_{\l} : M \to N$ 
      is a diffeomorphism for each $\l \in (-a , a)$.
   \end{enumerate}
\end{proposition}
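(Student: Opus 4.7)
For Part~(1), I fix $(x, y, t) \in M$ and check the two defining conditions of $N$ on $\F_\l(x, y, t) = (x, \f_x^\l(y, t))$. Convexity of the open disk $\Hn{2}$ immediately gives $\f_x^\l(y, t) \in \Hn{2}$, since both $G_x^0(y, t)$ and $G_x^\l(y, t)$ lie in $\Hn{2}$. For the tangency condition $\scal{x}{\f_x^\l(y, t) - x} \neq 0$, I would split on $t$. If $t \in (0, 1)$, Lemma~\ref{lem:basic-1}(3) places $\f_x^\l(y, t)$ in $\cD$; for every $p \in \cD$, Cauchy--Schwarz yields $\scal{x}{p} \leq |x| |p| < R^2$, so $\scal{x}{\f_x^\l(y, t) - x} < 0$. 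If $t \in \RR \setmin (1/4, 3/4)$, property~(2) of $\r$ together with $\r(t) = \r(1 - t)$ give $\r(t) = 0$, so $\f_x^\l(y, t) = G_x^0(y, t) = x + \t(t)(y - x)$ in the Klein model, where $\t : \RR \to \RR$ is a smooth monotone function with $\t(0) = 0$; then $\scal{x}{\f_x^\l(y, t) - x} = \t(t) \scal{x}{y - x}$ is nonzero since $t \neq 0$ forces $\t(t) \neq 0$ and $y \neq x$ with $|x| = |y| = R$ forces $\scal{x}{y - x} < 0$. Together these exhaust $t \in \RR \setmin \{0\}$.

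For Part~(2), the strategy is to exploit that the perturbation $\F_\l - \F_0$ has compact support in $M$, then run a perturbation-of-diffeomorphism argument. First I verify $\F_0 : M \to N$ is itself a diffeomorphism by writing the explicit smooth inverse: given $(x, p) \in N$, the affine line $\{x + s(p - x)\}_{s \in \RR}$ is not tangent to $\bD$ at $x$ and meets $\bD$ at the unique second point $y = x - (2 \scal{x}{p - x} / |p - x|^2)(p - x)$, depending smoothly on $(x, p)$, and $t$ is then uniquely determined by $G_x^0(y, t) = p$. Next, I set $A := \{(x, y) \in \bD \times \bD : [x, y] \cap \cD(R/2) \neq \Oset\}$ and $K_0 := \bar{A} \times [1/4, 3/4]$. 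As the segment $[x, y]$ shrinks to $\{x\} \inc \bD$ as $y \to x$ and $\bD \cap \cD(R/2) = \Oset$, one has $\bar{A} \cap \D = \Oset$, so $K_0$ is compact in $M$. For $(x, y, t) \notin K_0$, either $\r(t) = 0$ (if $t \notin [1/4, 3/4]$) or the segment $[x, y]$ entirely avoids $\cD(R/2)$, in which case $g_\l = g_0$ along $[x, y]$ and uniqueness of $g_\l$-geodesics forces $G_x^\l(y, t) = G_x^0(y, t)$ for all $t \in [0, 1]$; in both cases $\f_x^\l(y, t) = G_x^0(y, t)$, whence $\F_\l \equiv \F_0$ on $M \setmin K_0$.

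The remaining steps assemble as follows. Since $d\F_0$ is invertible on the compact set $K_0$ and $d\F_\l$ depends continuously on $(\l, q)$, there exists $a \in (0, \d_0)$ such that $d\F_\l$ is invertible on $K_0$ for every $|\l| < a$; outside $K_0$, $d\F_\l = d\F_0$ is already invertible; hence $\F_\l$ is a local diffeomorphism on all of $M$. Properness of $\F_\l$ follows by writing $\F_\l^{-1}(C) = (\F_\l^{-1}(C) \cap K_0) \cup (\F_0^{-1}(C) \setmin K_0)$ for any compact $C \inc N$, a union of two compact sets. A proper local diffeomorphism is a covering map, and to see it has degree one on each connected component of $N$, I use that $\F_0(K_0)$ is compact in $N$ while each component of $N$ is non-compact (because $\F_0$ is a global diffeomorphism and each component of $M$ is non-compact); hence each component of $N$ contains points $p$ satisfying $\F_\l^{-1}(p) = \F_0^{-1}(p)$, a single point. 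Thus $\F_\l$ is a bijective covering, i.e., a diffeomorphism. The principal obstacle I anticipate is this bijectivity step: since neither $M$ nor $N$ is compact, no purely local implicit-function argument can deliver a global diffeomorphism, and the compact-support structure of $\F_\l - \F_0$ is essential to make the properness-and-covering argument close.
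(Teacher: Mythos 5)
Your proof is correct, and while its overall skeleton (reduce to $\l = 0$, observe that the perturbation $\F_{\l} - \F_{0}$ is supported in a compact subset of $M$, upgrade ``local diffeomorphism'' to ``diffeomorphism'' via a covering argument anchored at a fiber of cardinality one) matches the paper's, the mechanisms differ at almost every step. For part~(1) you argue directly (Cauchy--Schwarz inside $\cD$ for $t \in (0,1)$, the affine formula for $G_{x}^{0}$ when $\r(t) = 0$), where the paper argues by contradiction; the content is the same. For part~(2) the paper shows $\F_{0}$ is a local diffeomorphism by computing its tangent map and checking injectivity, gets local invertibility of $\F_{\l}$ on its compact set $K$ from the Regular Points Lemma (Lemma~\ref{lem:reg-pts}), applies the finite-sheeted covering lemma (Lemma~\ref{lem:covering}) to the restriction of $\F_{\l}$ to the compact set $K$, and produces a singleton fiber by a geometric half-cone construction. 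You instead exhibit an explicit smooth inverse of $\F_{0}$, get local invertibility on $K_{0}$ from continuity of the differential in $(\l , q)$, and --- this is the genuinely different part --- work with $\F_{\l}$ on all of $M$: properness plus the fact that a proper local diffeomorphism is a covering map, with the degree computed at a point escaping the compact images. This buys a cleaner injectivity step: the paper's application of the covering lemma to the compact set $K$ (a manifold with boundary) requires knowing that the restriction of $\F_{\l}$ is a local homeomorphism \emph{onto} $\F_{\l}(K)$ even at boundary points of $K$, and its singleton-fiber construction takes a page of geometry; your soft compactness argument avoids both.

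Two small patches are needed. First, in the properness step, $\F_{0}^{-1}(C) \setmin K_{0}$ need not be compact (it is relatively open in the compact set $\F_{0}^{-1}(C)$); the correct statement is that $\F_{\l}^{-1}(C)$ is a closed subset of the compact set $K_{0} \cup \F_{0}^{-1}(C)$, hence compact. Second, to guarantee $\F_{\l}^{-1}(p) = \F_{0}^{-1}(p)$ you must choose $p$ outside $\F_{\l}(K_{0})$ as well as outside $\F_{0}(K_{0})$, since otherwise $\F_{\l}^{-1}(p)$ could contain extra points inside $K_{0}$; both images are compact, so their union still misses points of each (non-compact) component of $N$. Neither issue affects the validity of the argument.
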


\medskip 

Thanks to this \emph{key} proposition and Corollary~\ref{cor:basic-2} below, we will be able to prove 
that the set $\gS_{\l}$ satisfies properties~(1), (3) and~(4) in Definition~\ref{def:admissible} 
(admissibility) together with point~(2) in Definition~\ref{def:property-C} (Property~(C)) 
after a suitable shrink of $a > 0$.  
Then, since property~(2) in Definition~\ref{def:admissible} is obvious by construction of $\gS_{\l}$ 
and since point~(1) in Definition~\ref{def:property-C} will be a consequence of 
Proposition~\ref{prop:end-point} below, $\gS_{\l}$ will finally be a set of parameterized curves 
that is admissible for $\cD$ and has Property~(C). 

\medskip

Now, the argument to prove Proposition~\ref{prop:diffeo} consists in saying that since 
it is obviously true for $\l = 0$, it still remains true for any $\l$ that is very close to $0$. 

\smallskip 

In order to apply this perturbation argument in a rigorous way, 
we will make use of two classical results in algebraic and
differential topology we recall here:

\begin{lemma}[Covering maps. See \cite{God71}, page~109] \label{lem:covering}
   Let $X$ and $Y$ be Hausdorff topological spaces such that $X$ is compact and $Y$ is connected. 
   Then any local homeomorphism $f : X \to Y$ is a covering map with a finite number of sheets.
\end{lemma}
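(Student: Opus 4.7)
The plan is to establish three properties in sequence: (i) $f$ is surjective, (ii) each fiber $f^{-1}(y)$ is finite, and (iii) every $y\in Y$ admits an evenly covered open neighborhood. The constant cardinality of the fibers (forcing ``finite number of sheets'') will then follow from connectedness of $Y$ in a standard way.

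For surjectivity, I would use that a local homeomorphism is an open map, so $f(X)$ is open in $Y$; on the other hand, since $X$ is compact and $Y$ is Hausdorff, $f(X)$ is also closed. Assuming $X$ nonempty, $f(X)$ is a nonempty clopen subset of the connected space $Y$, hence $f(X)=Y$. For the fibers, fix $y\in Y$: the preimage $f^{-1}(y)$ is closed in the compact space $X$, therefore compact. It is also discrete, because near each of its points $f$ is a local homeomorphism onto a neighborhood of $y$, so distinct preimages are separated by small neighborhoods on which $f$ is injective. A compact discrete space is finite, say $f^{-1}(y)=\{x_1,\dots,x_n\}$.

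For the evenly covered neighborhood, I would pick, using the local homeomorphism property together with the Hausdorff axiom on $X$, pairwise disjoint open sets $U_i\ni x_i$ such that each $f|_{U_i}\colon U_i\to f(U_i)$ is a homeomorphism onto an open subset of $Y$. Set $W:=\bigcap_{i=1}^n f(U_i)$, an open neighborhood of $y$. The key refinement is then to ensure that no extra preimages sneak in outside $\bigsqcup_i U_i$: the set $K:=X\setminus\bigsqcup_i U_i$ is closed in the compact space $X$, hence compact, so its image $f(K)$ is closed in $Y$, and by construction $y\notin f(K)$. Taking $V:=W\setminus f(K)$, one gets an open neighborhood of $y$ with $f^{-1}(V)\subseteq\bigsqcup_i U_i$, and setting $V_i:=U_i\cap f^{-1}(V)$ yields a disjoint decomposition $f^{-1}(V)=V_1\sqcup\cdots\sqcup V_n$ with $f|_{V_i}\colon V_i\to V$ a homeomorphism. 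This is exactly the covering-map trivialization condition at $y$.

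Finally, the function $y\mapsto\#f^{-1}(y)$ is locally constant by what we just proved, and $Y$ is connected, so it is globally constant, equal to some $n\in\mathbb{N}$; hence $f$ is a covering map with finitely many sheets. The main obstacle, and the place where all three hypotheses (compactness of $X$, Hausdorffness, connectedness of $Y$) genuinely get used, is the refinement step that trims $W$ down to $V$: without compactness of $X$ one cannot guarantee that $f(K)$ is closed, and without the Hausdorff separation one cannot a priori disjointify the $U_i$; everything else is soft bookkeeping around the local homeomorphism property.
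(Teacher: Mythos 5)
Your proof is correct and complete; it is the standard textbook argument for this classical fact (surjectivity via open-plus-closed image, finiteness and discreteness of fibers from compactness and local injectivity, and the crucial trimming step $V := W \setminus f(K)$ that uses compactness of $X$ and Hausdorffness of $Y$ to exclude stray preimages). The paper itself offers no proof of this lemma — it simply cites Godbillon — so there is nothing to compare against; your argument is exactly the one the cited reference supplies.
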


\medskip

and 

\medskip 

\begin{lemma}[Regular points. See \cite{GuiPol74}, page~35] \label{lem:reg-pts}
   Let $\L$, $M$ and $N$ be $\Cl{1}$ manifolds, and let
   $$
   \map{F :}{\L \times M}{N}{(\l , x)}{F(\l , x) = f_{\l}(x)}
   $$
   be a $\Cl{1}$ map. 
   Let $\l_{0} \in \L$, $y_{0} \in N$ and $K \inc M$ be a compact set. 
   Then, if every $x \in f_{\l_{0}}^{-1}(y_{0}) \cap K$ is a regular point of $f_{\l_{0}}$,
   there exists an open neighborhood $U$ of $\l_{0}$ in $\L$ such that for each $\l \in U$,
   any $x \in f_{\l}^{-1}(y_{0}) \cap K$ is a regular point of $f_{\l}$.
\end{lemma}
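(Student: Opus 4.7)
The plan is a routine open-plus-compactness argument: I will first show that regularity is an open condition on $\L\times M$, then use compactness of $K$ to promote pointwise regularity on $f_{\l_0}^{-1}(y_0)\cap K$ to a uniform statement over a neighborhood of $\l_0$.

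More precisely, let me set $R:=\{(\l,x)\in\L\times M \st x$ is a regular point of $f_\l\}$, i.e.\ the set where the partial differential $\partial_x F(\l,x):T_xM\to T_{F(\l,x)}N$ is surjective. I claim $R$ is open in $\L\times M$. In local charts around a point $(\l_0,x_0)\in R$, the map $\partial_x F$ depends continuously on $(\l,x)$, and surjectivity of a linear map $\RR^m\to\RR^n$ is equivalent to the non-vanishing of (the maximum of the absolute values of) its $n\times n$ minors, which is an open condition on the entries. Hence $R$ contains an open neighborhood of $(\l_0,x_0)$, and being a union of such neighborhoods it is open in $\L\times M$.

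Now I argue by contradiction. Suppose no neighborhood $U$ as in the statement exists. Then there is a sequence $\l_n\to\l_0$ in $\L$ together with points $x_n\in f_{\l_n}^{-1}(y_0)\cap K$ such that $x_n\notin R_{\l_n}:=\{x\in M \st (\l_n,x)\in R\}$, i.e.\ $x_n$ is \emph{not} a regular point of $f_{\l_n}$. Since $K$ is compact, after extraction we may assume $x_n\to x_\infty$ for some $x_\infty\in K$. By continuity of $F$ we have
\[
F(\l_0,x_\infty)=\lim_{n\to+\infty} F(\l_n,x_n)=y_0,
\]
so $x_\infty\in f_{\l_0}^{-1}(y_0)\cap K$. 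By hypothesis, $x_\infty$ is a regular point of $f_{\l_0}$, i.e.\ $(\l_0,x_\infty)\in R$. Since $R$ is open and $(\l_n,x_n)\to(\l_0,x_\infty)$, we have $(\l_n,x_n)\in R$ for all $n$ large enough, contradicting the choice of the sequence.

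The only nontrivial ingredient is the openness of $R$, and that reduces to the elementary linear-algebra fact that surjective linear maps form an open subset of the space of linear maps (detected by $n\times n$ minors). Apart from that, the proof is pure compactness-plus-continuity, so I do not expect any serious obstacle; the main thing to be careful about is keeping track that compactness is applied to $K\subset M$ (not to $\L$), which is exactly why the hypothesis $K$ compact appears in the statement.
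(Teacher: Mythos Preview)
Your proof is correct and is the standard openness-plus-compactness argument for this kind of stability statement. Note, however, that the paper does not actually give its own proof of this lemma: it simply states it with a reference to Guillemin--Pollack (page~35) and uses it as a black box. So there is no ``paper's proof'' to compare against; your argument is essentially the one found in that reference.
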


\bigskip

\begin{proof}[Proof of Proposition~\ref{prop:diffeo}]~\\ 
\indent The proof will consist in five steps. 

\smallskip 

After showing that $\F_{0}(M) \inc N$ and $\F_{0} : M \to N$ is a bijection,
we first prove that $\F_{0}$ is a local diffeomorphism. 
We use this and Lemma~\ref{lem:reg-pts} (Regular points Lemma) to find a value $a \in (0 , \d_{0})$ 
such that for each $\l \in (-a , a)$, $\F_{\l}(M) \inc N$ 
and $\F_{\l} : M \to N$ is also a local diffeomorphism. 
Next, we use Lemma~\ref{lem:covering} (Covering maps Lemma) to obtain that $\F_{\l} : M \to N$ is a
finite sheeted covering map.  
Finally we prove there is a point in $N$ at which the number of pre-images
for this covering map is~1, and conclude $\F_{\l} : M \to N$ is a diffeomorphism.

\medskip

$\bullet$ \textsf{Step~1:}  
We begin by showing that $\F_{0}(M) \inc N$ and $\F_{0} : M \to N$ is a bijection.

\smallskip 

Since the images of $g_{0}$-geodesics are affine segments, 
for each $(x , y , t) \in ((\bD \times \bD) \setmin \D) \times \RR$,
there is a unique real number $\om(x , y , t)$ 
such that $\f_{x}^{0}(y , t) = G_{x}^{0}(y , t) = x + \om(x , y , t) (y - x)$,
and thus $\F_{0}(x , y , t) = (x \, , \, x + \om(x , y , t) (y - x))$. 
The function $\om : ((\bD \times \bD) \setmin \D) \times \RR \to \RR$ 
is therefore $\Cl{\infty}$ by smoothness of $\F_{0}$,
and satisfies the two following properties for each $(x , y) \in (\bD \times \bD) \setmin \D$:

\begin{enumerate}[$(i)$]
   \item $\om(x , y , 0) = 0$ and $\om(x , y , 1) = 1$; 

   \item for any $t \in \RR$, $\disp \pder{\om}{t}(x , y , t) \neq 0$.
\end{enumerate}

\smallskip 

For $x , y \in \bD$ with $x \neq y$ and $t \in \RR \setmin \{ 0 \}$, we then have 
$\scal{x \,}{\, x + \om(x , y , t) (y - x) - x} = \om(x , y , t) \scal{x}{y - x} \neq 0$ 
since $\scal{x}{y - x} \neq 0$ ($\bD$ is a Euclidean circle), $\om(x , y , 0) = 0$ (point~$(i)$ above) 
and $\om(x , y , \cdot) : \RR \to \RR$ is injective (point~$(ii)$ above). 
This shows $\F_{0}(M) \inc N$.

\smallskip 

Now, given any $(x , p) \in N$, let $y$ be the intersection point of the straight line $(x p)$ with $\bD$.
We have $y \neq x$, and thus we can write $p = x + \om(x , y , t) (y - x)$ with a unique $t \in \RR$
($\om(x , y , \cdot) : \RR \to \RR$ is injective) which is not equal to $0$ since $p \neq x$.
This proves there is a unique $(x , y , t) \in M$ such that $\F_{0}(x , y , t) = (x , p)$.
Hence $\F_{0} : M \to N$ is a bijection.

\medskip

$\bullet$ \textsf{Step~2:}  
Let us prove $\F_{0} : M \to N$ is a local diffeomorphism. 

\smallskip 

Given any $(x , y , t) \in M$, it suffices to show that the linear tangent map
$T_{(x , y , t)}\F_{0} : T_{(x , y , t)}M \to T_{\F_{0}(x , y , t)}N$
is injective since the manifolds $M$ and $N$ have the same dimension (equal to three).

\smallskip 

But for all $(u , v , s) \in T_{(x , y , t)}M = T_{x}\bD \times T_{y}\bD \times \RR$, we compute
\begin{multline*}
   \Tg{(x , y , t)}{\F_{0}}{(u , v , s)} = \\
   \left( \! u \, , \, u + \left\{ \pder{\om}{x}(x , y , t) \act u 
   + \pder{\om}{y}(x, y , t) \act v 
   + s \pder{\om}{t}(x , y , t) \right\} \! (y - x) + \om(x , y , t) (v - u) \! \right).
\end{multline*}

\smallskip 

So, if $\Tg{(x , y , t)}{\F_{0}}{(u , v , s)} = (0 , 0) \in T_{\F_{0}(x , y , t)}N = T_{x}\bD \times \Rn{2}$,
we get 
$$
u = 0 
\quad \mbox{and} \quad 
u + \left\{ \pder{\om}{x}(x , y , t) \act u + \pder{\om}{y}(x, y , t) \act v 
+ s \pder{\om}{t}(x , y , t) \right\} \! (y - x) + \om(x , y , t) (v - u) = 0.
$$
Hence 
$\disp \left\{ \pder{\om}{y}(x, y , t) \act v + s \pder{\om}{t}(x , y , t) \right\} \! (x - y) 
= \om(x , y , t) v$.

\smallskip 

As $v \in T_{y}\bD$, the first member of this equality lies in $T_{y}\bD$ too,
which implies
\begin{equation} \label{equ:kernel}
   \pder{\om}{y}(x, y , t) \act v + s \pder{\om}{t}(x , y , t) = 0 
\end{equation}
since $x - y \notin T_{y}\bD$. 
Thus $\om(x , y , t) v = 0$, that is $v = 0$ since $\om(x , y , t) \neq 0$ 
(use $t \neq 0$ and points~$(i)$ and~$(ii)$ in Step~1).

\smallskip 

Finally, replacing $v = 0$ in Equation~\ref{equ:kernel}, we obtain 
$\disp s \pder{\om}{t}(x , y , t) = 0$ and deduce $s = 0$ from point~$(ii)$ in Step~1. 

\smallskip 

Thus, $\F_{0} : M \to N$ is a local diffeomorphism.

\medskip

$\bullet$ \textsf{Step~3:} 
Now we fix $\l \in (-\d_{0} , \d_{0})$ and show by contradiction that $\F_{\l}(M) \inc N$.

\smallskip 

Let $(x_{0} , p) \in \cT$ and suppose there exist 
$y_{0} \in \bD \setmin \{ x_{0} \}$ and $t_{0} \in \RR \setmin \{ 0 \}$ 
such that $\F_{\l}(x_{0} , y_{0} , t_{0}) = (x_{0} , p)$.
Denoting by $L$ the tangent line to $\bD$ at $x_{0}$, we have $p \in L$.

\smallskip 

As $\r \equiv 0$ on $(-\infty , 1/4] \cup [3/4 , +\infty)$ by properties~(2) and~(3) 
in Equation~\ref{equ:rho}, we have 
$\f_{x_{0}}^{\l}(y_{0} , t) = G_{x_{0}}^{0}(y_{0} , t)$ 
for all $t \in (-\infty , 1/4] \cup [3/4 , +\infty)$. 
Thus, if we had $t_{0} \in (-\infty , 1/4] \cup [3/4 , +\infty)$, we would get 
$p = \f_{x_{0}}^{\l}(y_{0} , t_{0}) = G_{x_{0}}^{0}(y_{0} , t_{0})$, 
and thus $p$ would lie on the straight line $(x_{0} y_{0})$. 
But this implies $p = x_{0}$ since $(x_{0}y_{0}) \cap L = \{ x_{0} \}$, 
and hence $x_{0} = G_{x_{0}}^{0}(y_{0} , t_{0})$, which means that
$\F_{0}(x_{0} , y_{0} , t_{0}) = (x_{0} , x_{0}) = \F_{0}(x_{0} , y_{0} , 0)$.

\smallskip 

Since $\F_{0} : M \to N$ is injective, we then get $t_{0} = 0$,
which is not possible. 
Therefore, we have $t_{0} \in [1/4 , 3/4]$. 
But this is also impossible since by point~(3) of Lemma~\ref{lem:basic-1}, $t_{0} \in (0,1)$ implies
that $p = \f_{x_{0}}^{\l}(y_{0} , t_{0}) \in \cD$, and $L \cap \cD$ is empty.

\medskip

$\bullet$ \textsf{Step~4:} 
Now, let $\ell := R / \sqrt{3} > 0$. 
Then any chord of $\bD$ that is tangent to $\bD(R/2)$ has a Euclidean length equal to $3 \ell$. 

\smallskip 

Define $\Om := \{ (x , y) \in \bD \times \bD : |y - x| < \ell \}$ 
and consider the compact set $K := ((\bD \times \bD) \setmin \Om) \times [1/4 , 3/4] \inc M$. 

The complement of $K$ in $M$ is the disjoint union of the open sets 
$U_{1} := (\Om \setmin \D) \times (\RR \setmin \{ 0 \})$ and 
$U_{2} := ((\bD \times \bD) \setmin \Om) \times ((-\infty , 0) \cup (0 , 1/4) \cup (3/4 , +\infty))$ 
of $M$.  

\smallskip

We will first show that for each $\l \in (-\d_{0} , \d_{0})$, $\F_{\l} : M \to N$ induces a diffeomorphism 
from $M \setmin K$ onto an open set in $N$.
Then we will use Lemma~\ref{lem:reg-pts} to find a number $a \in (0 , \d_{0})$ such that 
for all $\l \in (-a , a)$, $\F_{\l} : M \to N$ is a local diffeomorphism at any point in $K$.

\smallskip

Fix $\l \in (-\d_{0} , \d_{0})$. 

\smallskip 

For every $(x , y) \in \Om \setmin \D$, 
the image of the $g_{0}$-geodesic $G_{x}^{0}(y , \cdot) : \RR \to \Hn{2}$ 
lies in the open set $\Hn{2} \setmin \bar{\cD(R/2)}$ of $\Hn{2}$ since this image is equal to the
intersection of the straight line $(x y)$ with $\Hn{2}$. 
As the Riemannian metrics $g_{\l}$ and $g_{0}$ coincide on $\Hn{2} \setmin \bar{\cD(R/2)}$, we
get that the $g_{\l}$-geodesic $G_{x}^{\l}(y , \cdot) : \RR \to \Hn{2}$ is actually
equal to $G_{x}^{0}(y , \cdot) : \RR \to \Hn{2}$. 
Hence, $\rest{(\F_{\l})}{U_{1}} = \rest{(\F_{0})}{U_{1}}$. 

\smallskip 

Next, using again the fact that $\r \equiv 0$ on $(-\infty , 1/4] \cup [3/4 , +\infty)$, 
we have $\f_{x}^{\l}(y , t) = G_{x}^{0}(y , t)$ 
for all $(x , y , t) \in \bD \times \bD \times ((-\infty , 1/4] \cup [3/4 , +\infty))$. 
Thus, $\rest{(\F_{\l})}{U_{2}} = \rest{(\F_{0})}{U_{2}}$. 

\smallskip 

We conclude that we have
$\rest{(\F_{\l})}{(M \backslash K)} = \rest{(\F_{0})}{(M \backslash K)}$, and hence
$\F_{\l} : M \to N$ induces a diffeomorphism from $M \setmin K$ 
onto $\F_{\l}(M \setmin K) = \F_{0}(M \setmin K)$,
which is an open set of $N$ since $\F_{0} : M \to N$ is an open map by Step~2.

\smallskip 

On the other hand, fixing $(x , p) \in N$, we have that the unique point $(x , y , t) \in M$ 
satisfying $\F_{0}(x , y , t) = (x , p)$ is regular for the diffeomorphism $\F_{0} : M \to N$, 
thus any point in $\F_{0}^{-1}((x , p)) \cap K$ is regular for $\F_{0}$. 
We can then apply Lemma~\ref{lem:reg-pts} to $\F : (-\d_{0} , \d_{0}) \times M \to N$ 
and get the existence of a number $a \in (0 , \d_{0})$ 
such that for each $\l \in (-a , a)$, all the points in $\F_{\l}^{-1}((x , p)) \cap K$ 
are regular for $\F_{\l} : M \to N$. 
As $M$ and $N$ have the same dimension, $\F_{\l} : M \to N$ is a local diffeomorphism at any point in $K$.

\smallskip 

Summing up, we proved that $\F_{\l} : M \to N$ is a local diffeomorphism for every $\l \in (-a , a)$.

\medskip

$\bullet$ \textsf{Step~5:} 
From now on, fix $\l \in (-a , a)$. 

\smallskip 

As $\F_{\l} : M \setmin K \to \F_{\l}(M \setmin K)$ is a diffeomorphism, 
what remains is for us to show that the map $\F_{\l} : K \to \F_{\l}(K)$ is one-to-one. 

\smallskip 

Since $\F_{\l} : K \to \F_{\l}(K)$ is a local homeomorphism by Step~4
and $K$ is compact and connected, we can apply Lemma~\ref{lem:covering} 
with $X = K$ and $Y = \F_{\l}(K)$ in order to get that $\F_{\l} : K \to \F_{\l}(K)$ 
is a covering map with a finite number of sheets. 
We complete the argument by finding a point in the image of this covering map 
at which the number of pre-images is~1.

\smallskip 

Choose $(x_{0} , y_{0} , t_{0}) \in K$ with $|y_{0} - x_{0}| = 2 \ell$ 
and let $p := \f_{x_{0}}^{\l}(y_{0} , t_{0})$. 

\smallskip 

Since any chord of $\bD$ that is tangent to $\bD(R/2)$ has a Euclidean length equal to $3 \ell$ 
and since $|y_{0} - x_{0}| < 3 \ell$, 
the straight line $(x_{0} y_{0})$ does not intersect with $\bar{\cD(R/2)}$. 
Then we have $\f_{x_{0}}^{\l}(y_{0} , t) = G_{x_{0}}^{0}(y_{0} , t)$ for all $t \in \RR$, 
and thus $p = G_{x_{0}}^{0}(y_{0} , t_{0})$.  
Consider any $y_{1} \in \bD \setmin \{ x_{0} \}$ and $t_{1} \in [1/4 , 3/4]$ 
such that $p = \f_{x_{0}}^{\l}(y_{1} , t_{1})$,
and let us prove that $y_{1} = y_{0}$ and $t_{1} = t_{0}$.

\smallskip 

Fix a closed half cone $C$ in $\Rn{2}$ whose vertex is $x_{0}$ 
and that contains $\cD(R/2)$ with $y_{0} \notin C$ (see Figure~\ref{fig:one-to-one}). 

\begin{figure}[h] 

\begin{pspicture}[-0.03](8,7) 
   \pscustom[linewidth=0.02]
   {
   \psline(4,7)(1.172,2)(7.5,0.5) 
   \gsave 
      \pscurve[liftpen=1](7.5,0.5)(7.1,3.6)(5.5,6.9) 
      \newgray{g9}{0.9} 
      \fill[fillstyle=solid,fillcolor=g9] 
   \grestore 
   } 
   \uput{0.1}[0](4.5,6.7){\darkgray $C$} 
   \psdots[dotstyle=*,dotscale=0.3](4,3) 
   \uput{0.1}[90](4,3){$0$} 
   \pscircle[linewidth=0.02](4,3){3} 
   \uput{0.1}[45](6.236,5){$\bD$} 
   \pscircle[linewidth=0.01](4,3){1.5} 
   \uput{1.5}[70](4,3){$\bD(\! R / 2)$} 
   \psdots[dotstyle=*,dotscale=0.9](1.182,2)(5,0.182)(2.5,1.366)(6.818,2) 
   \uput{0.15}[240](1.172,2){$x_{0}$} 
   \uput{0.18}[268](5,0.172){$y_{0}$} 
   \uput{0.15}[215](2.5,1.366){$p$} 
   \uput{0.17}[300](6.828,2){$y_{1}$} 
   \psline[linewidth=0.04,linestyle=dotted](0.672,2.239)(5.5,-0.067) 
   \psline[linewidth=0.06]{c->}(3.5,0.888)(3.6,0.84) 
   \uput{0.12}[250]{335}(3.55,0.86){$G_{\! x_{0}}^{0} \! (y_{0} , \cdot)$} 
   \psline[linewidth=0.04](0.6,2)(1.6,2) 
   \pscurve[linewidth=0.04](1.5,2)(1.55,2)(1.6,2)(1.65,2)(2,2.05)(2.5,2.4)(3.07,3.51)
   (3.5,3)(4,4)(4.5,3)(4.93,3.51)(5.5,2.4)(6,2.05)(6.35,2)(6.4,2)(6.45,2)(6.5,2) 
   \psline[linewidth=0.04](6.4,2)(8,2) 
   \psline[linewidth=0.08]{c->}(2.709,2.94)(2.749,3.1) 
   \uput{0.15}[310](2.7,3){$\f_{\! x_{0}}^{\l} \! (y_{1} , \cdot)$} 
   \psline(1.172,2)(7.5,0.5) 
   \psline(1.172,2)(4,7) 
\end{pspicture}  
   \caption{\label{fig:one-to-one} The map $\F_{\l} : K \to \F_{\l}(K)$ is one-to-one} 
\end{figure}

\smallskip 

We show by contradiction that $y_{1} \notin C$.  
If we assume $y_{1}$ is in $C$, then point~(2) in Lemma~\ref{lem:basic-1} 
implies $p = \f_{x_{0}}^{\l}(y_{1} , t_{1}) \in C$ since $t_{1} \in [0 , 1]$. 
But this is not possible since $p \notin C$ (indeed, we have $y_{0} \notin C$ 
and $p = G_{x_{0}}^{0}(y_{0} , t_{0})$ lies on the affine segment $]x_{0} , y_{0}[$). 
We conclude that we necessarily have $y_{1} \notin C$.

\smallskip

It follows that the straight line $(x_{0} y_{1})$ does not meet $\bar{\cD(R/2)}$, 
and thus $\f_{x_{0}}^{\l}(y_{1} , t) = G_{x_{0}}^{0}(y_{1} , t)$ for all $t \in \RR$.  
Therefore, 
$G_{x_{0}}^{0}(y_{0} , t_{0}) = p = \f_{x_{0}}^{\l}(y_{1} , t_{1}) = G_{x_{0}}^{0}(y_{1} , t_{1})$, 
or equivalently 
$\F_{0}(x_{0} , y_{0} , t_{0}) =(x_{0} , p) = \F_{0}(x_{0} , y_{1} , t_{1})$,
which implies $y_{1} = y_{0}$ and $t_{1} = t_{0}$ since $\F_{0} : M \to N$ is injective. 

In other words, we showed that $\F_{\l}^{-1}((x_{0} , p)) = 
\{ (x_{0} , y_{0} , t_{0}) \}$ with $(x_{0} , y_{0} , t_{0}) \in K$. 
Hence $(x_{0} , p) \in \F_{\l}(K)$ and there is a unique point 
in the fiber of $\F_{\l} : K \to \F_{\l}(K)$ over $(x_{0} , p)$. 
This proves that the covering map $\F_{\l} : K \to \F_{\l}(K)$ has only one sheet, 
which implies it is bijective. 

\smallskip 

But on the other hand, as we have seen that 
$\rest{(\F_{\l})}{(M \setmin K)} = \rest{(\F_{0})}{(M \setmin K)}$ 
and $\F_{0} : M \to N$ is a bijection, the map $\F_{\l} : M \setmin K \to \F_{\l}(M \setmin K)$ 
is also a bijection. 

\smallskip 

Hence $\F_{\l} : M \to N$ is bijective.  

\smallskip 

As we showed this map is a local diffeomorphism in Step~4, 
it is finally a diffeomorphism and this ends the proof of Proposition~\ref{prop:diffeo}.
\end{proof}

\bigskip

For each $\l \in (-a , a)$, we can now define the map $\s_{\l} : N \to \bD$ by
$\s_{\l}(x , p) := y$, where $y \in \bD$ is such that $(x , y , t)$ is the unique point in $M$
that satisfies $\F_{\l}(x , y , t) = (x , p)$ according to Proposition~\ref{prop:diffeo} 
(see Figure~\ref{fig:end-point-2}). 

\begin{figure}[h] 

\begin{pspicture}(8,6.5) 
   \psdots[dotstyle=*,dotscale=0.3](4,3) 
   \uput{0.1}[90](4,3){$0$} 
   \pscircle[linewidth=0.02](4,3){3} 
   \uput{0.1}[45](6.236,5){$\bD$} 
   \pscircle[linewidth=0.01](4,3){1.5} 
   \uput{1.5}[70](4,3){$\bD(\! R / 2)$} 
   \psdots[dotstyle=*,dotscale=0.9](1.182,2)(6.818,2) 
   \uput{0.15}[180](1.172,2){$x$} 
   \uput{0.15}[0](6.828,2){$y =: \s_{\l}(x , p)$} 
   \psline[linewidth=0.04](1.172,2)(1.6,2) 
   \pscurve[linewidth=0.04](1.5,2)(1.55,2)(1.6,2)(1.65,2)(2,2.05)(2.5,2.4)(3.07,3.51)
   (3.5,3)(4,4)(4.5,3)(4.93,3.51)(5.5,2.4)(6,2.05)(6.35,2)(6.4,2)(6.45,2)(6.5,2) 
   \psline[linewidth=0.04](6.4,2)(6.828,2) 
   \psline[linewidth=0.08]{c->}(2.709,2.94)(2.749,3.1) 
   \uput{0.15}[310](2.7,3){$\f_{x}^{\l}(y , \cdot)$} 
   \psdots[dotstyle=*,dotscale=0.9](5.82,2.14) 
   \uput{0.2}[265](5.82,2.14){$p$} 
\end{pspicture}  
   \caption{\label{fig:end-point-2} The `end point map' $\s_{\l}$} 
\end{figure}

\medskip 

\begin{remark} \label{rem:end-point} 
By the first point in Lemma~\ref{lem:basic-1}, for any $x , y , p \in \Hn{2}$, we have the equivalence
$$
((x , p) \in N \ \ \mbox{and} \ \ \s_{\l}(x , p) = y) 
\iff 
((y , p) \in N \ \ \mbox{and} \ \ \s_{\l}(y , p) = x).
$$
\end{remark}

\bigskip

Let us now prove the following useful result:  

\begin{lemma} \label{lem:diff-perturb}
   Let $\L$, $M$ and $N$ be $\Cl{k}$ manifolds ($k \geq 1$ integer), 
   and let $(f_{\l})_{\l \in \L}$ be a family of $\Cl{k}$ diffeomorphisms from $M$ to $N$. \\
   If $\map{\t :}{\L \times M}{N}{(\l , x)}{f_{\l}(x)}$ is of class $\Cl{k}$, 
   then the map $\map{h :}{\L \times N}{\L \times M}{(\l , x)}{(\l , f_{\l}^{-1}(x))}$ 
   is a $\Cl{k}$ diffeomorphism. 
   
   In particular, $\map{}{\L \times N}{M}{(\l , x)}{f_{\l}^{-1}(x)}$ is of class $\Cl{k}$. 
\end{lemma}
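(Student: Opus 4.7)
The plan is to introduce the auxiliary map
$$
T : \L \times M \to \L \times N, \qquad T(\l , x) := (\l , \t(\l , x)) = (\l , f_{\l}(x)),
$$
and show that it is a $\Cl{k}$ diffeomorphism whose inverse is precisely $h$. The conclusion then follows immediately, since $(\l , x) \in \L \times N \longmapsto f_{\l}^{-1}(x) \in M$ is the composition of $h$ with the canonical projection $\L \times M \to M$, which is smooth.

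First, I would observe that $T$ is of class $\Cl{k}$, directly from the assumption that $\t$ is $\Cl{k}$ and that the first component map $(\l , x) \mapsto \l$ is smooth. Next, $T$ is bijective as a set-theoretic map: for each $\l \in \L$, the restriction $\rest{T}{\{\l\} \times M} : \{\l\} \times M \to \{\l\} \times N$ is identified with $f_{\l}$, which is a bijection, so $T$ preserves $\l$-fibers bijectively, and the set-theoretic inverse is visibly $h$.

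The heart of the argument is that $T$ is a local $\Cl{k}$ diffeomorphism at every point $(\l_{0} , x_{0}) \in \L \times M$. Working in local charts, the tangent map $\Tg{(\l_{0} , x_{0})}{T}{}$ has the block form
$$
\Tg{(\l_{0} , x_{0})}{T}{} =
\begin{pmatrix}
\mathrm{Id}_{T_{\l_{0}} \L} & 0 \\
\pder{\t}{\l}(\l_{0} , x_{0}) & \pder{\t}{x}(\l_{0} , x_{0})
\end{pmatrix},
$$
where $\disp \pder{\t}{x}(\l_{0} , x_{0}) = \Tg{x_{0}}{f_{\l_{0}}}{}$. Since $f_{\l_{0}}$ is a diffeomorphism, this second diagonal block is invertible, and a block-triangular endomorphism with invertible diagonal blocks is itself invertible. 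Hence $\Tg{(\l_{0} , x_{0})}{T}{}$ is an isomorphism, and the inverse function theorem furnishes open neighborhoods on which $T$ is a $\Cl{k}$ diffeomorphism onto its image.

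Combining local smooth invertibility with the global bijectivity established above, $T$ is a $\Cl{k}$ diffeomorphism from $\L \times M$ onto $\L \times N$, and therefore so is its inverse $h$. I do not foresee a genuine obstacle here: the only subtle point is to verify that the partial derivative $\disp \pder{\t}{x}(\l_{0} , x_{0})$ indeed coincides with the differential of the diffeomorphism $f_{\l_{0}}$ at $x_{0}$ (so that its invertibility is available), after which the block-triangular structure makes the inverse function theorem applicable without further work.
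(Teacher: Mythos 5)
Your proposal is correct and follows essentially the same route as the paper: both establish that the forward map $(\l , x) \mapsto (\l , f_{\l}(x))$ is $\Cl{k}$, bijective, and a local diffeomorphism via the (block-triangular) structure of its tangent map, then invoke the inverse function theorem and compose with the projection. The only cosmetic difference is that the paper deduces invertibility of the tangent map from its injectivity plus equality of dimensions, whereas you argue invertibility directly from the block-triangular form; these are interchangeable.
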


\medskip

\begin{proof}~\\ 
\indent Since the map $\map{h :}{\L \times M}{\L \times N}{(\l , x)}{(\l , f_{\l}(x)) =: (\l , \t(\l , x))}$
is of class $\Cl{k}$ and bijective, it suffices to show it is a local diffeomorphism.
But this is equivalent to showing that for any $(\l , x) \in \L \times M$,
the linear tangent map $T_{(\l , x)}h : T_{(\l , x)}(\L \times M) \to T_{h(\l , x)}(\L \times N)$
is injective since the manifolds $M$ and $N$ have the same dimension by hypothesis.

\smallskip 

Now, for all $(\x , v) \in T_{(\l , x)}(\L \times M) = T_{\l}\L \times T_{x}M$, we have
$$
\Tg{(\l , x)}{h}{(\x , v)} 
= 
(\x \, , \, \Tg{(\l , x)}{\t}{(\x , v)}) 
= 
\left( \! \x \, , \, \pder{\t}{\l}(\l , x) \act \x + \pder{\t}{x}(\l , x) \act v \! \right)
= 
\left( \! \x \, , \, \pder{\t}{\l}(\l , x) \act \x + \Tg{x}{f_{\l}}{v} \! \right) \! .
$$

\smallskip 

So, if $\Tg{(\l , x)}{h}{(\x , v)} = (0 , 0) \in T_{h(\l , x)}(\L \times N) = T_{\l}\L \times T_{f_{\l}(x)}N$,
we get
$$
\x = 0 \quad \mbox{and} \quad \pder{\t}{\l}(\l , x) \act \x + \Tg{x}{f_{\l}}{v} = 0.
$$ 
Hence $\Tg{x}{f_{\l}}{v} = 0$, which implies $v = 0$ since $f_{\l}$ is a diffeomorphism. 

\smallskip 

Conclusion: $h$ is a $\Cl{k}$ diffeomorphism. 

\smallskip 

Therefore, if $\pi : \L \times M \to M$ denotes the natural projection, 
$\pi \circ h^{-1} : \L \times N \to M$ is of class $\Cl{k}$, 
which establishes Lemma~\ref{lem:diff-perturb}. 
\end{proof}

\medskip

This lemma then implies

\begin{proposition} \label{prop:end-point} 
   The map 
   $$
   \map{}{(-a , a) \times N}{\bD}{(\l , (x , p))}{\s_{\l}(x , p)} \ \mbox{is} \ \Cl{\infty}. 
   $$
\end{proposition}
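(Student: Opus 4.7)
The plan is to derive this directly from Lemma~\ref{lem:diff-perturb} combined with Proposition~\ref{prop:diffeo}, since the proposition is essentially just a packaging of these two ingredients.

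First, I would observe that the map $\F : (-\d_{0} , \d_{0}) \times \bD \times \bD \times \RR \to \bD \times \Rn{2}$ defined just before Proposition~\ref{prop:diffeo} is $\Cl{\infty}$ by construction: indeed, $\f_{x}^{\l}(y , t)$ is a convex combination (with smooth weights $1 - \r(t)$ and $\r(t)$) of the $\Cl{\infty}$ expressions $G_{x}^{0}(y , t)$ and $G_{x}^{\l}(y , t)$, both of which are smooth in all four variables $(\l , x , y , t)$ since the exponential map $\exp^{\l}$ of the smooth family of Riemannian metrics $(g_{\l})$ depends smoothly on $\l$ (cf.\ Proposition~\ref{prop:conformal}). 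Consequently, the restriction of $\F$ to $(-a , a) \times M$, viewed as a map to $N$, is $\Cl{\infty}$.

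Next, I would invoke Lemma~\ref{lem:diff-perturb} with $\L = (-a , a)$, the manifolds $M$ and $N$ introduced before Proposition~\ref{prop:diffeo}, and the family of maps $f_{\l} := \F_{\l}$. Proposition~\ref{prop:diffeo} asserts that each $\F_{\l} : M \to N$ is a $\Cl{\infty}$ diffeomorphism, so every hypothesis of Lemma~\ref{lem:diff-perturb} holds. The conclusion yields that the map
$$
(\l , (x , p)) \longmapsto \F_{\l}^{-1}(x , p)
$$
is $\Cl{\infty}$ from $(-a , a) \times N$ into $M$.

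Finally, writing $\pi_{2} : ((\bD \times \bD) \setmin \D) \times (\RR \setmin \{ 0 \}) \to \bD$ for the (smooth) projection onto the second $\bD$-factor, the definition of $\s_{\l}$ given just after the proof of Proposition~\ref{prop:diffeo} yields
$\s_{\l}(x , p) = \pi_{2}(\F_{\l}^{-1}(x , p))$,
so the desired map is a composition of $\Cl{\infty}$ maps and is itself $\Cl{\infty}$. There is no genuine obstacle in this proof: the entire substance lies upstream, in the smoothness of the construction of $\f_{x}^{\l}$ and, most importantly, in the global diffeomorphism result of Proposition~\ref{prop:diffeo}, whose perturbation argument (Steps~1--5 of that proof) is what made this packaging possible.
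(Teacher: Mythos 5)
Your proof is correct and is essentially the same as the paper's: both write $\s_{\l}(x,p)$ as the composition of $\F_{\l}^{-1}$ with the projection onto the $y$-factor and then apply Lemma~\ref{lem:diff-perturb} with $\L=(-a,a)$ and $f_{\l}=\F_{\l}$, using Proposition~\ref{prop:diffeo} for the diffeomorphism hypothesis. The only difference is that you spell out the smoothness of $\F$ in $(\l,x,y,t)$, which the paper takes for granted from its earlier construction.
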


\medskip

\begin{proof}~\\ 
\indent If we introduce the natural projection $\pi : \Rn{2} \times \Rn{2} \times \RR \to \Rn{2}$ 
onto the second factor, we can write 
$\s_{\l}(x , p) = \pi(\F_{\l}^{-1}(x , p))$ for all $(x , p) \in N$. 
Then, applying Lemma~\ref{lem:diff-perturb} with $\L := (-a , a)$ and 
$f_{\l} := \F_{\l}$ (which is a diffeomorphism by point~(2) in Proposition~\ref{prop:diffeo}), 
we get the result. 
\end{proof}

\bigskip

A direct consequence of Proposition~\ref{prop:diffeo} is the following:

\begin{corollary} \label{cor:basic-2}
   Given any $\l \in (-a , a)$, we have 
   
   \begin{enumerate}[(1)]
      \item for every $(x , y) \in (\bD \times \bD) \setmin \D$,
      the $\Cl{\infty}$ parameterized curve $\f_{x}^{\l}(y , \cdot) : \RR \to \Rn{2}$ 
      is regular and injective, and 

      \smallskip 

      \item for every $(x , p) \in N$ and $V \in \Rn{2} \setmin \{ 0 \}$,
      $$
      \pder{\s_{\l}}{p}(x , p) \act V = 0 
      \iff 
      V \ \mbox{and} \ \pder{\f_{x}^{\l}}{t}(y , t) \ \mbox{are parallel vectors},
      $$
      where $p := \f_{x}^{\l}(y , t)$.
   \end{enumerate}
\end{corollary}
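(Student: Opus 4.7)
The plan is to deduce both assertions directly from Proposition~\ref{prop:diffeo} (which gives that $\F_{\l} : M \to N$ is a diffeomorphism), combined with Lemma~\ref{lem:basic-1}(3) and the flatness of $\r$ outside $[1/4 , 3/4]$ recorded in~(\ref{equ:rho}). Smoothness of $\f_{x}^{\l}(y , \cdot)$ in part~(1) is immediate since it is the second component of $\F_{\l}(x , y , \cdot)$. For regularity I would split $\RR$ into two regions: on $\RR \setmin [1/4 , 3/4]$, properties~(2) and~(3) of $\r$ force $\r \equiv 0$ near each such $t$, so $\f_{x}^{\l}(y , \cdot)$ locally coincides with the non-constant $g_{0}$-geodesic $G_{x}^{0}(y , \cdot)$ and is therefore regular; on the interval $[1/4 , 3/4] \inc (0 , 1)$ the triple $(x , y , t)$ lies in $M$, so Proposition~\ref{prop:diffeo}(2) tells us that $T_{(x , y , t)}\F_{\l}$ is a linear isomorphism, and applying it to $(0 , 0 , 1)$ yields $(0 , \pder{\f_{x}^{\l}}{t}(y , t))$, which is thus non-zero.

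For injectivity, I would argue by contradiction on a pair $t_{1} \neq t_{2}$ with $\f_{x}^{\l}(y , t_{1}) = \f_{x}^{\l}(y , t_{2})$, splitting into three cases according to which of the $t_{i}$ lie in $(0 , 1)$. If both do, then $(x , y , t_{i}) \in M$ and injectivity of $\F_{\l}$ forces $t_{1} = t_{2}$. If exactly one does, Lemma~\ref{lem:basic-1}(3) puts one image in $\cD$ and the other outside, a contradiction. If neither does, then $\r$ vanishes at both, so $\f_{x}^{\l}(y , t_{i}) = G_{x}^{0}(y , t_{i})$, and global injectivity of the $g_{0}$-geodesic parametrization (which uses $x \neq y$) again forces $t_{1} = t_{2}$.

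For part~(2), I would invoke the inverse function theorem. Writing $(x , y , t) := \F_{\l}^{-1}(x , p)$, so that $y = \s_{\l}(x , p)$ and $\s_{\l}$ is the composition of $\F_{\l}^{-1}$ with the projection onto the $y$-factor, one sees that $\pder{\s_{\l}}{p}(x , p) \act V$ vanishes iff $(T_{(x , y , t)}\F_{\l})^{-1}(0 , V)$ has trivial $y$-component, equivalently iff $(0 , V)$ belongs to $T_{(x , y , t)}\F_{\l}(\{ 0 \} \times \{ 0 \} \times \RR)$. Since $T_{(x , y , t)}\F_{\l}(0 , 0 , s) = (0 , s \pder{\f_{x}^{\l}}{t}(y , t))$, this image equals $\{ 0 \} \times \RR \act \pder{\f_{x}^{\l}}{t}(y , t)$, so the condition is exactly that $V$ be parallel to $\pder{\f_{x}^{\l}}{t}(y , t)$, a vector which is non-zero by part~(1).

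I do not foresee a genuine obstacle here: everything is a routine unpacking of the diffeomorphism property. The only mildly delicate point is the three-case analysis for global injectivity of $\f_{x}^{\l}(y , \cdot)$, whose essential input (beyond Proposition~\ref{prop:diffeo}) is the dichotomy $\f_{x}^{\l}(y , t) \in \cD \iff t \in (0 , 1)$ provided by Lemma~\ref{lem:basic-1}(3), together with the fact that $\r$ vanishes outside $(0 , 1)$ so that the curve reduces to the ambient $g_{0}$-geodesic there.
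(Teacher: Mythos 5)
Your proposal is correct and follows essentially the same route as the paper: both parts are deduced from the diffeomorphism property of $\F_{\l} : M \to N$ in Proposition~\ref{prop:diffeo}, with regularity read off from $T_{(x,y,t)}\F_{\l} \act (0,0,1) = (0 , \pder{\f_{x}^{\l}}{t}(y,t))$ and part~(2) obtained by unpacking $\s_{\l} = \mathrm{proj}_{y} \circ \F_{\l}^{-1}$. The only (harmless) divergence is in the injectivity case analysis, where the paper disposes of the parameter value $t = 0$ via the symmetry $\f_{x}^{\l}(y , t) = \f_{y}^{\l}(x , 1 - t)$ of Lemma~\ref{lem:basic-1}(1), while you instead use Lemma~\ref{lem:basic-1}(3) together with the vanishing of $\r$ outside $(0,1)$; your treatment of regularity near $t = 0$ (where $(x,y,0) \notin M$) is in fact slightly more careful than the paper's.
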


\medskip

\begin{proof}~\\ 
\indent $\bullet$ \textsf{Point~(1):}  
Let $(x , y) \in (\bD \times \bD) \setmin \D$.

\smallskip 

For any $t \in \RR$, we have
$$
\left( \! 0 \, , \, \pder{\f_{x}^{\l}}{t}(y , t) \! \right) 
=
\pder{\F_{\l}}{t}(x , y , t) 
= 
\Tg{(x , y , t)}{\F_{\l}}{(0 , 0 , 1)} \neq (0 , 0) 
$$ 
since $T_{(x , y , t)}\F_{\l} : T_{(x , y , t)}M \to T_{\F_{\l}(x , y , t)}N$ is one-to-one 
by Proposition~\ref{prop:diffeo}, 
and therefore $\disp \pder{\f_{x}^{\l}}{t}(y , t) \neq 0$.
Hence $\f_{x}^{\l}(y , \cdot) : \RR \to \Rn{2}$ is regular.

\smallskip

Let $t_{0}, t_{1} \in \RR$ such that $\f_{x}^{\l}(y , t_{0}) = \f_{x}^{\l}(y , t_{1})$. 
Then $\F_{\l}(x , y , t_{0}) = \F_{\l}(x , y , t_{1})$. 
If $t_{0} \neq 0$ and $t_{1} \neq 0$, we have $(x , y , t_{0}), (x , y , t_{1}) \in M$, 
and thus $t_{0} = t_{1}$ since $\F_{\l} : M \to N$ is injective 
by Proposition~\ref{prop:diffeo}. 
If $t_{0} \neq 0$ and $t_{1} = 0$, we get $\f_{x}^{\l}(y , t_{0}) = \f_{x}^{\l}(y , 0)$ 
which also writes $\f_{y}^{\l}(x , 1 - t_{0}) = \f_{y}^{\l}(x , 1)$ by point~(1) in Lemma~\ref{lem:basic-1}.
Since $x \neq y$, we have $1 - t_{0} \neq 0$, and thus $1 - t_{0} = 1$ in the same way as previously, 
\emph{i.e.}, $t_{0} = 0 = t_{1}$.
 
This shows that $\f_{x}^{\l}(y , \cdot) : \RR \to \Rn{2}$ is injective.

\medskip

$\bullet$ \textsf{Point~(2):} 
Let $(x , p) \in N$ and $V \in \Rn{2} \setmin \{ 0 \}$. 

\smallskip 

By Proposition~\ref{prop:diffeo}, there are unique elements $(x , y , t) \in M$,
$v \in T_{y}\bD$ and $s \in \RR$ such that $(x , p) = \F_{\l}(x , y , t)$ 
and $(0 , V) = \Tg{(x , y , t)}{\F_{\l}}{(0 , v , s)}$.

\smallskip 

Then we have the equivalences
\begin{eqnarray*}
   \pder{\s_{\l}}{p}(x , p) \act V = 0 
   & \iff & 
   \Tg{(x , p)}{\s_{\l}}{(0 , V)} = 0 \\
   & \iff & 
   \Tg{(x , y , t)}{(\s_{\l} \circ \F_{\l})}{(0 , v , s)} = 0 \\
   & \iff & 
   v = 0 \\ 
   & & \mbox{(since $\s_{\l}(\F_{\l}(x , y , t)) = y$)} \\
   & \iff & 
   (0 , V) = \Tg{(x , y , t)}{\F_{\l}}{(0 , 0 , s)} 
   = s \pder{\F_{\l}}{t}(x , y , t) = \left( \! 0 \, , \, s \pder{\f_{x}^{\l}}{t}(y , t) \! \right) \\
   & \iff & 
   V = s \pder{\f_{x}^{\l}}{t}(y , t),
\end{eqnarray*}
and we are done.
\end{proof}

\bigskip


\subsubsection{\textsf{\emph{Admissibility and Property~(C) for the set $\gS_{\l}$}}} 
~

\smallskip 

The following two propositions allow us to shrink $a > 0$ so that for \emph{each} $\l \in (-a , a)$, 
the set of parameterized curves 
$\gS_{\l} = \{ \c^{\l}_{(x , y)} \st (x , y) \in (\bD \times \bD) \setmin \D \}$ 
be admissible for $\cD$ and have Property~(C).

\begin{proposition} \label{prop:p-q}
   There exists a number $b \in (0 , a)$ such that for all $\l \in (-b , b)$ 
   and $p , q \in \bar{\cD}$ with $p \neq q$,
   there is a unique 
   $(x , y , t_{0} , t_{1}) \in ((\bD \times \bD) \setmin \D) \times [0 , 1] \times [0 , 1]$
   such that $p = \f_{x}^{\l}(y , t_{0})$, $q = \f_{x}^{\l}(y , t_{1})$ and $t_{0} < t_{1}$.
\end{proposition}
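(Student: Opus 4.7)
The strategy is to follow the same template as the proof of Proposition~\ref{prop:diffeo}: first establish the statement for $\l = 0$ by a direct geometric argument, and then extend to small $\l$ via the Regular Points and Covering Maps Lemmas.

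For $\l = 0$, $\gS_0$ consists of the affine chords of $\bar{\cD}$ reparametrized via the function $\om$ introduced in Step~1 of the proof of Proposition~\ref{prop:diffeo}. Given distinct $p, q \in \bar{\cD}$, the affine line through them meets $\bD$ in an unordered pair $\{x^{\sharp}, y^{\sharp}\}$, and the strict monotonicity in $t$ of $\om(x^{\sharp}, y^{\sharp}, \cdot)$ yields unique values $\tau_p, \tau_q \in [0, 1]$ realizing $p$ and $q$; the constraint $t_0 < t_1$ then pins down a unique labeling of the pair. For $\l \in (-a, a)$, the boundary strata ($p \in \bD$ or $q \in \bD$) reduce immediately to Proposition~\ref{prop:diffeo} via point~(3) of Lemma~\ref{lem:basic-1}: $p \in \bD$ forces $t_0 = 0$ and $x = p$, $q \in \bD$ forces $t_1 = 1$ and $y = q$, and the remaining parameters are recovered from $\F_{\l}^{-1}$. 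The heart of the proof is therefore the interior case $p, q \in \cD$.

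For this interior case, I introduce the $\Cl{\infty}$ map
\[
\Psi : (-a, a) \times \gP \to \cD \times \cD, \qquad \Psi(\l, x, y, t_0, t_1) := (\f_x^\l(y, t_0), \f_x^\l(y, t_1)),
\]
where $\gP := ((\bD \times \bD) \setmin \D) \times \{(t_0, t_1) \in (0, 1)^2 : t_0 < t_1\}$ is a $4$-manifold, matching the dimension of the target. A Jacobian calculation analogous to Step~2 of the proof of Proposition~\ref{prop:diffeo} shows $\Psi_0$ is a local diffeomorphism everywhere on $\gP$: in the kernel of $d\Psi_0$, eliminating between the two component equations forces $u \in T_x\bD$ parallel to $y - x$, and since $\om(x, y, t_0) \neq \om(x, y, t_1)$ (from $t_0 \neq t_1$ and monotonicity of $\om$ in $t$) together with $y - x \notin T_x\bD$ (from $x \neq y$ on the Euclidean circle $\bD$) this gives $u = 0$; substituting back yields $v = 0$ similarly, and then $\partial_t \om \neq 0$ forces $s_0 = s_1 = 0$. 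Combined with the bijectivity of $\Psi_0$ onto $\{(p, q) \in \cD \times \cD : p \neq q\}$ established in the previous paragraph, $\Psi_0$ is a global diffeomorphism.

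To extend to $\l \neq 0$ small, I replicate Steps~3--5 of the proof of Proposition~\ref{prop:diffeo}. Outside a suitable compact set $\gK \subset \gP$ one has $\Psi_{\l} = \Psi_0$ pointwise, either because $(x, y)$ gives a short chord avoiding $\bar{\cD(R / 2)}$ (so $G_x^{\l} = G_x^0$) or because both $t_0, t_1 \in [0, 1/4] \cup [3/4, 1]$ (so $\r$ vanishes at both times). The Regular Points Lemma applied to $\Psi$ yields $b \in (0, a)$ such that $\Psi_{\l}$ is a local diffeomorphism on all of $\gP$ for every $\l \in (-b, b)$; the Covering Maps Lemma then makes $\Psi_{\l} : \gK \to \Psi_{\l}(\gK)$ a finite-sheeted covering, forced to have a single sheet by evaluating at any pair $(p_0, q_0)$ whose $\l = 0$ chord lies entirely in $\Hn{2} \setmin \bar{\cD(R/2)}$. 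The main technical obstacle is the stratified structure of the domain: $\{t_0 < t_1\} \subset [0, 1]^2$ is neither open nor closed, so the perturbation tools do not apply directly to the full parameter space; my plan is to split according to which of $p, q$ lie in $\cD$, treat the boundary strata via the reductions to Proposition~\ref{prop:diffeo} described above, run the covering argument on the open interior $\gP$, and glue the pieces by continuity.
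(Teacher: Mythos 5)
Your handling of the boundary cases ($p \in \bD$ or $q \in \bD$) matches the paper's Case~1 and is correct. For the interior case $p , q \in \cD$, however, you depart from the paper in a way that creates a genuine gap. The paper fixes $p$ and $q$ and reduces everything to the zero set of the \emph{scalar} function $f_{\l}(x) = \det{V_{p}^{\l}(x) , V_{q}^{\l}(x)}$ on the \emph{compact} circle $\bD$, which is exactly the setting in which Lemmas~\ref{lem:reg-pts}, \ref{lem:reg-val} and~\ref{lem:covering} apply. Your four-dimensional map $\Psi_{\l}$ on $\gP = ((\bD \times \bD) \setmin \D) \times \{ (t_{0} , t_{1}) \in (0,1)^{2} \st t_{0} < t_{1} \}$ has no such compactness, and the obstacle is not the stratum you flag (where $p$ or $q$ hits $\bD$) but the diagonal $\{ t_{0} = t_{1} \}$, which lies in the frontier of $\gP$ without belonging to it.

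Concretely, take $(x , y)$ spanning a long chord that crosses $\cD(R/4)$ and take $t_{0} < t_{1}$ both in $(1/2 , 2/3)$, where $\r \equiv 1$: there $\f_{x}^{\l}(y , t_{i}) = G_{x}^{\l}(y , t_{i})$, which for $\l \neq 0$ cannot be expected to equal $G_{x}^{0}(y , t_{i})$, so $\Psi_{\l} \neq \Psi_{0}$ at such points; yet as $t_{1} - t_{0} \to 0$ these points leave every compact subset of $\gP$. Hence there is no compact $\gK \subset \gP$ with $\Psi_{\l} = \Psi_{0}$ on $\gP \setmin \gK$: the Regular Points Lemma then cannot certify that $\Psi_{\l}$ is a local diffeomorphism on all of $\gP$ (only on compacta), and the Covering Maps Lemma, which requires a compact source, does not apply to the set you would need. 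To rescue your scheme you would have to (i) prove directly that $\Psi_{\l}$ stays a local diffeomorphism on the non-compact region accumulating on $\{ t_{0} = t_{1} \}$ -- which, unwound, is essentially the paper's determinant computation for $f_{\l}$ -- and (ii) replace the covering lemma by a properness argument for $\Psi_{\l} : \gP \to (\cD \times \cD) \setmin \{ (p,p) \}$. I recommend instead following the paper's reduction to the circle. One point in your favour is worth recording, though: since $(p , q)$ is part of the target of $\Psi$ rather than fixed in advance, a corrected version of your argument would produce a $b$ that is manifestly uniform in $(p , q)$, a uniformity the paper's proof (which chooses $b$ after fixing $p , q$) leaves implicit.
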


\smallskip 

This proposition will imply that $\gS_{\l}$ satisfies property~(3) 
in Definition~\ref{def:admissible} (admissibility) for every $\l \in (-b , b)$. 

\medskip

\begin{proposition} \label{prop:p-v}
   There exists a number $c \in (0 , a)$ such that for all $\l \in (-c , c)$, 
   $p \in \cD$ and $V \in \Rn{2} \setmin \{ 0 \}$,
   there is a unique $(x , y , t) \in ((\bD \times \bD) \setmin \D) \times (0 , 1)$
   such that $p = \f_{x}^{\l}(y , t)$
   and $\disp \pder{\f_{x}^{\l}}{t}(y , t)$ is parallel to $V$ with the same direction.
\end{proposition}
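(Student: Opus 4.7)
The plan is to reduce Proposition~\ref{prop:p-v} to a bijectivity statement for a suitable auxiliary map and then to follow the five-step structure of the proof of Proposition~\ref{prop:diffeo}. Set $M' := ((\bD \times \bD) \setmin \D) \times (0, 1)$ and $N' := \cD \times \Sn{1}$, where $\Sn{1}$ denotes the unit Euclidean circle in $\Rn{2}$, and introduce the $\Cl{\infty}$ map
\[
\Psi : (-a, a) \times M' \to N',
\qquad
(\l, (x, y, t)) \longmapsto \Psi_\l(x, y, t) := \left( \f_x^\l(y, t), \, \frac{\pder{\f_x^\l}{t}(y, t)}{\big| \pder{\f_x^\l}{t}(y, t) \big|} \right)\!,
\]
which is well-defined thanks to Corollary~\ref{cor:basic-2}(1). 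Both $M'$ and $N'$ are smooth $3$-dimensional manifolds, and Proposition~\ref{prop:p-v} is equivalent to the statement that $\Psi_\l : M' \to N'$ is a bijection for every $\l$ in some neighborhood $(-c, c)$ of $0$ in $(-a, a)$.

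I would first dispose of the case $\l = 0$. Given $(p, V) \in N'$, the oriented affine line $\{p + s V \st s \in \RR\}$ meets the Euclidean circle $\bD$ in exactly two points, which I label $x$ and $y$ so that $y - x$ is a positive scalar multiple of $V$; using the strictly increasing $\Cl{\infty}$ diffeomorphism $\om(x, y, \cdot) : [0, 1] \to [0, 1]$ from Step~1 of the proof of Proposition~\ref{prop:diffeo}, there is then a unique $t \in (0, 1)$ with $\om(x, y, t) = |p - x|/|y - x|$, and this $(x, y, t)$ is the unique preimage of $(p, V)$ under $\Psi_0$. A tangent-map computation along the lines of Step~2 of the proof of Proposition~\ref{prop:diffeo}, using that $\pder{\om}{t}(x, y, t) \neq 0$, shows moreover that $\Psi_0$ is a local diffeomorphism, hence a global $\Cl{\infty}$ diffeomorphism $M' \to N'$.

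For $\l$ close to $0$, I would keep the same compact subset $K := ((\bD \times \bD) \setmin \Om) \times [1/4, 3/4]$ of $M'$ as in Step~4 of the proof of Proposition~\ref{prop:diffeo}. On the open complement $M' \setmin K$, either $|y - x| < \ell$ so that the chord $[x, y]$ avoids $\bar{\cD(R/2)}$ and hence $G_x^\l(y, \cdot) = G_x^0(y, \cdot)$, or $t \in (0, 1/4) \cup (3/4, 1)$ so that $\r$ vanishes to all orders near $t$; in both cases $\f_x^\l \equiv \f_x^0$ in a neighborhood of $(x, y, t)$, and consequently $\Psi_\l = \Psi_0$ there. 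On the compact set $K$, continuity of the Jacobian determinant of $\Psi_\l$ in $(\l, (x, y, t))$ and its nonvanishing at $\l = 0$ supply some $c \in (0, a)$ such that $\Psi_\l$ is a local diffeomorphism on all of $M'$ for every $\l \in (-c, c)$. In addition $\Psi_\l$ is proper: as $|y - x| \to 0$, $t \to 0^+$ or $t \to 1^-$, the first coordinate $\f_x^\l(y, t)$ of the image tends to $\bD$ and therefore leaves every compact subset of $\cD$. Consequently $\Psi_\l : M' \to N'$ is a proper local diffeomorphism between connected manifolds, hence a covering map, and bijectivity of $\Psi_\l$ amounts to showing that this covering has a single sheet.

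The hard part will be exhibiting a point $(p, V) \in N'$ whose preimage under $\Psi_\l$ contains exactly one element, the new difficulty compared with Proposition~\ref{prop:diffeo} being that the image $(p, V)$ no longer pins down the source variable $x$ (a priori several distinct chords of $\bD$ could pass through $p$ in directions yielding velocity $V$). Following Step~5 of the proof of Proposition~\ref{prop:diffeo}, I would pick $(x_0, y_0, t_0) \in K$ with $|y_0 - x_0| = 2\ell$ and $t_0 \in (1/4, 3/4)$; then $|y_0 - x_0| < 3\ell$ forces the chord $[x_0, y_0]$ to avoid $\bar{\cD(R/2)}$, so $\f_{x_0}^\l(y_0, \cdot) = G_{x_0}^0(y_0, \cdot)$ for every $\l$ and $(p, V) := \Psi_\l(x_0, y_0, t_0) = \Psi_0(x_0, y_0, t_0)$. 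Any preimage of $(p, V)$ lying in $M' \setmin K$ would coincide with $(x_0, y_0, t_0)$ by injectivity of $\Psi_0$, which is impossible since $(x_0, y_0, t_0) \in K$. To rule out extra preimages in $K$ for $\l$ small, I would argue by contradiction using compactness and continuity: if a sequence $\l_n \to 0$ carried preimages $(x_n, y_n, t_n) \in K \setmin \{(x_0, y_0, t_0)\}$ with $\Psi_{\l_n}(x_n, y_n, t_n) = (p, V)$, then by compactness of $K$ a subsequential limit $(x_\infty, y_\infty, t_\infty) \in K$ would satisfy $\Psi_0(x_\infty, y_\infty, t_\infty) = (p, V)$, forcing $(x_\infty, y_\infty, t_\infty) = (x_0, y_0, t_0)$ by bijectivity of $\Psi_0$; this would contradict the local injectivity of $\Psi_{\l_n}$ near $(x_0, y_0, t_0)$ ensured by the parametric inverse function theorem for $n$ large. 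Hence for $\l$ small the covering $\Psi_\l : M' \to N'$ has exactly one sheet and is therefore a $\Cl{\infty}$ diffeomorphism, which gives the required existence and uniqueness in Proposition~\ref{prop:p-v}.
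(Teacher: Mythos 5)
Your argument is correct, but it takes a genuinely different route from the paper's. The paper works one-dimensionally and pointwise: for fixed $(p , V)$ it introduces the function $f_{\l}(x) = \det{V \, , \, \pder{\f_{x}^{\l}}{t}(y , t)}$ on the circle $\bD$, where $(x , y , t) = \F_{\l}^{-1}(x , p)$, identifies $f_{0}^{-1}(0)$ with the two intersection points of the line $p + \RR V$ with $\bD$, verifies by an explicit computation with $\s_{0}$ that $0$ is a regular value of $f_{0}$, and then repeats verbatim the regular-value/covering argument over the parameter interval from the end of the proof of Proposition~\ref{prop:p-q} to conclude that $f_{\l}$ keeps exactly two zeros (one per orientation) for small $\l$. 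You instead package the whole statement as bijectivity of a single map $\Psi_{\l} : M' \to \cD \times \Sn{1}$ and rerun the global scheme of Proposition~\ref{prop:diffeo}. What your version buys is that the threshold $c$ is manifestly uniform in $(p , V)$ — in the paper's formulation the constant is produced after fixing $(p , V)$ and the uniformity is left implicit — at the price of two extra verifications the paper's route avoids: properness of $\Psi_{\l}$ (needed because $M'$ is noncompact and, unlike the paper, you cannot apply Lemma~\ref{lem:covering} to a compact piece of the source such as $W_{0}$), and a three-dimensional kernel computation for $T\Psi_{0}$.

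Those two points are exactly where your sketch should be expanded, and both do go through. For the tangent map, the new ingredient relative to Step~2 of the proof of Proposition~\ref{prop:diffeo} is that vanishing of the direction component forces $v - u$ to be parallel to $y - x$; transversality of the chord to $\bD$ at both endpoints then gives $u = v = 0$, and $s \, \pder{\om}{t}(x , y , t) = 0$ finishes. For properness, the limits $t \to 0^{+}$ and $t \to 1^{-}$ with $x$ and $y$ staying apart are handled by continuity together with point~(3) of Lemma~\ref{lem:basic-1}, but the degenerate limit $|y - x| \to 0$ needs the observation that a chord of Euclidean length less than $\ell$ misses $\bar{\cD(R/2)}$, so that $\f_{x}^{\l}(y , t) = G_{x}^{0}(y , t)$ lies on the affine segment $[x , y]$, which collapses onto a point of $\bD$ and hence leaves every compact subset of $\cD$. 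With these details written out, your proof is complete.
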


\smallskip 

This proposition will imply that $\gS_{\l}$ satisfies property~(4) 
in Definition~\ref{def:admissible} (admissibility) for every $\l \in (-c , c)$. 

\medskip

In order to prove these two results, we need the following classical lemma from
differential topology. This lemma allows us to show certain properties that are
true for $\l = 0$ continues to hold for $\l \in (-a , a)$ close enough to $0$.

\begin{lemma}[Regular value. See~\cite{Hir76}, Theorem~2.7] \label{lem:reg-val}
   Let $\L$, $M$ and $N$ be $\Cl{1}$ manifolds, and let
   $$
   \map{F :}{\L \times M}{N}{(\l , x)}{F(\l , x) = f_{\l}(x)}
   $$
   be a $\Cl{1}$ map.
   Given $y_{0} \in N$, we have
   
   \begin{enumerate}
      \item if $y_{0}$ is a regular value of $f_{\l}$ for all $\l \in \L$, 
      then $y_{0}$ is a regular value of $F$, 

      \smallskip 

      \item if $y_{0}$ is a regular value of $F$, 
      then $W = F^{-1}(y_{0})$ is a $\Cl{1}$ submanifold of $\L \times M$
      with dimension equal to $\dim{\L} + \dim{M} - \dim{N}$, and we have the equivalence 
      $$
      \forall \l \in \L,~(y_{0} \ \mbox{is a regular value of} \ f_{\l})
      \iff 
      (\l \ \mbox{is a regular value of} \ \rest{\pi}{W} : W \to \L),
      $$
      where $\pi : \L \times M \to \L$ is the natural projection.
   \end{enumerate}
\end{lemma}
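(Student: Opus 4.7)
The plan is to reduce everything to a single linear-algebraic identity at points of $F^{-1}(y_{0})$, namely
$$
\Tg{(\l , x)}{F}{(\x , v)} = \pder{F}{\l}(\l , x) \act \x + \Tg{x}{f_{\l}}{v}
$$
for $(\x , v) \in T_{\l}\L \times T_{x}M$. Once this decomposition is on the page, both assertions of the lemma follow from elementary facts about the sum of the images of the two partials.

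For point~(1), assume $y_{0}$ is a regular value of every $f_{\l}$. Then at any $(\l , x) \in F^{-1}(y_{0})$ the map $T_{x}f_{\l}$ is already surjective onto $T_{y_{0}}N$, so restricting the display above to $\x = 0$ shows that the image of $T_{(\l , x)}F$ is all of $T_{y_{0}}N$; hence $y_{0}$ is a regular value of $F$.

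For point~(2), the claim that $W := F^{-1}(y_{0})$ is a $\Cl{1}$ submanifold of $\L \times M$ of dimension $\dim{\L} + \dim{M} - \dim{N}$ is just the standard preimage theorem applied to $F$, so only the equivalence requires thought. Fix $\l \in \L$; at any point $(\l , x) \in W \cap \pi^{-1}(\{\l\})$ one has $T_{(\l , x)}W = \Ker{T_{(\l , x)}F}$, and $T_{(\l , x)}(\rest{\pi}{W})$ is the restriction of first-coordinate projection, sending $(\x , v)$ to $\x$. Its surjectivity at $(\l , x)$ is therefore equivalent to the following statement: for every $\x \in T_{\l}\L$ there exists $v \in T_{x}M$ with $\Tg{x}{f_{\l}}{v} = -\pder{F}{\l}(\l , x) \act \x$, that is, to the inclusion $\Im{\pder{F}{\l}(\l , x)} \inc \Im{T_{x}f_{\l}}$.

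The equivalence then drops out of the decomposition $T_{y_{0}}N = \Im{T_{(\l , x)}F} = \Im{\pder{F}{\l}(\l , x)} + \Im{T_{x}f_{\l}}$, which is available because $y_{0}$ is assumed to be a regular value of $F$. Under this identity, the inclusion $\Im{\pder{F}{\l}(\l , x)} \inc \Im{T_{x}f_{\l}}$ is equivalent to $\Im{T_{x}f_{\l}} = T_{y_{0}}N$, i.e.\ to surjectivity of $T_{x}f_{\l}$. Quantifying this over all $x \in f_{\l}^{-1}(y_{0})$ matches the definitions of ``$y_{0}$ is a regular value of $f_{\l}$'' and ``$\l$ is a regular value of $\rest{\pi}{W}$'' pointwise. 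I expect the only mildly delicate point is the book-keeping of the quantifier over $x$ in the fiber; everything else is pure linear algebra.
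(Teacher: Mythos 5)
Your argument is correct and complete: the decomposition $T_{(\l,x)}F\cdot(\x,v)=\pder{F}{\l}(\l,x)\act\x+\Tg{x}{f_{\l}}{v}$, the identification $T_{(\l,x)}W=\Ker{T_{(\l,x)}F}$, and the reduction of surjectivity of $T_{(\l,x)}(\rest{\pi}{W})$ to the inclusion $\Im{\pder{F}{\l}(\l,x)}\inc\Im{T_{x}f_{\l}}$ together give exactly the stated equivalence, and the vacuous cases (empty fibers) cause no trouble since the fiber of $\rest{\pi}{W}$ over $\l$ is precisely $\{\l\}\times f_{\l}^{-1}(y_{0})$. The paper itself offers no proof of this lemma (it only cites Hirsch), and your argument is the standard one underlying that reference, so there is nothing to reconcile.
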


\bigskip

\begin{proof}[Proof of Proposition~\ref{prop:p-q}]~\\ 
\indent Fix two distinct points $p , q \in \bar{\cD}$. 

\medskip 

$\bullet$ \textsf{Case~1:} 
Suppose $p \in \bD$ (the case $q \in \bD$ is similar).

\smallskip 

Then for each $\l \in (-a , a)$, 
there exist a unique $y \in \bD$ with $y \neq p$ and a unique $t_{1} \in \RR \setmin \{ 0 \}$
such that $\F_{\l}(p , y , t_{1}) = (p , q)$ by point~(2) in Proposition~\ref{prop:diffeo} 
since $(p , q) \in N$.
Hence we have $p = \f_{p}^{\l}(y , 0)$ and $q = \f_{p}^{\l}(y , t_{1})$ with 
$(p , y , 0) \in ((\bD \times \bD) \setmin \D) \times [0 , 1]$
and $t_{1} \in (0 , 1]$ by point~(2) in Lemma~\ref{lem:basic-1} since $q \in \bar{\cD}$.

\smallskip 

Now, let $(x' , y' , t'_{0} , t'_{1}) \in ((\bD \times \bD) \setmin \D) \times [0 , 1] \times [0 , 1]$
be such that $p = \f_{x'}^{\l}(y' , t'_{0})$ and $q = \f_{x'}^{\l}(y' , t'_{1})$ with $t'_{0} < t'_{1}$.
If we had $t'_{0} > 0$, then we would have $t'_{0} \in (0 , 1)$, and therefore $p \in \cD$ by point~(3)
in Lemma~\ref{lem:basic-1}. But this is not true.
So $t'_{0} = 0$, which implies $x' = \f_{x'}^{\l}(y' , 0) = p$,
and hence $\F_{\l}(p , y' , t'_{1}) = (p , q)$.

\smallskip 

But we already had $\F_{\l}(p , y , t_{1}) = (p , q)$, thus $y' = y$ and $t'_{1} = t_{1}$ 
since $t_{1} , t'_{1} \in \RR \setmin \{ 0 \}$ and $\F_{\l} : M \to N$ is injective
by point~(2) in Proposition~\ref{prop:diffeo}. 

\medskip

$\bullet$ \textsf{Case~2:} 
Suppose that both $p$ and $q$ are in $\cD$.

\smallskip 

Consider the function $F : (-a , a) \times \bD \to \RR$ 
defined by $F(\l , x) := f_{\l}(x) = \det{V_{p}^{\l}(x) , V_{q}^{\l}(x)}$, 
where $V_{p}^{\l}(x) := \s_{\l}(x , p) - x$ and $V_{q}^{\l}(x) := \s_{\l}(x , q) - x$ 
(see Figure~\ref{fig:prop-p-q}). 
Thanks to Proposition~\ref{prop:end-point}, this function is $\Cl{\infty}$. 

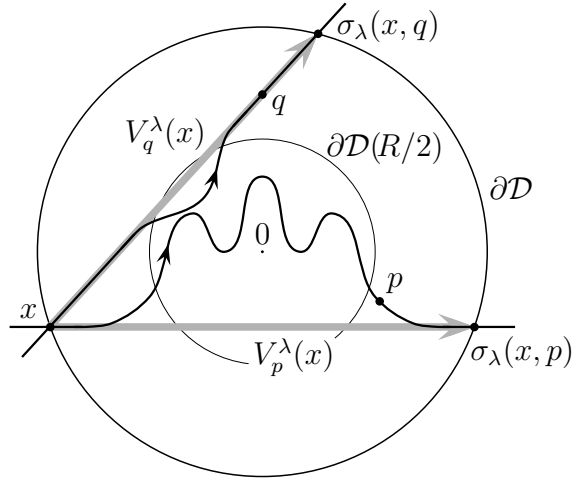
\begin{figure}[h] 

\begin{pspicture}(8,6.5) 
   \psdots[dotstyle=*,dotscale=0.3](4,3) 
   \uput{0.1}[90](4,3){$0$} 
   \pscircle[linewidth=0.02](4,3){3} 
   \uput{3.1}[15](4,3){$\bD$} 
   \psarc[linewidth=0.01](4,3){1.5}{-50}{263} 
   \uput{1.45}[50](4,3){$\bD(\! R / 2)$} 
   \newgray{g7}{0.7} 
   \psline[linewidth=0.1,linecolor=g7]{c->}(1.172,2)(6.828,2) 
   \psline[linewidth=0.15,linecolor=g7]{c->}(6.53,2)(6.828,2) 
   \uput{1.15}[284](4,3){$V_{p}^{\l}(x)$} 
   \psline[linewidth=0.1,linecolor=g7]{c->}(1.172,2)(4.743,5.907) 
   \psline[linewidth=0.15,linecolor=g7]{c->}(4.54,5.685)(4.743,5.907) 
   \uput{1.5}[125](4,3){$V_{q}^{\l}(x)$} 
   \psdots[dotstyle=*,dotscale=0.9](1.182,2)(6.818,2)(4,5.094)(4.743,5.901) 
   \uput{0.2}[140](1.172,2){$x$} 
   \uput{0.12}[308](6.828,2){$\s_{\l}(x , p)$} 
   \uput{0.12}[330](4,5.094){$q$} 
   \uput{0.25}[12](4.743,5.907){$\s_{\l}(x , q)$} 
   \psline[linewidth=0.03](0.64,2)(1.6,2) 
   \pscurve[linewidth=0.03](1.5,2)(1.55,2)(1.6,2)(1.65,2)(2,2.05)(2.5,2.4)(3.07,3.51)
   (3.5,3)(4,4)(4.5,3)(4.93,3.51)(5.5,2.4)(6,2.05)(6.35,2)(6.4,2)(6.45,2)(6.5,2) 
   \psline[linewidth=0.03](6.4,2)(7.36,2) 
   \psline[linewidth=0.07]{c->}(2.713,2.954)(2.749,3.1) 
   \psdots[dotstyle=*,dotscale=0.9](5.56,2.34) 
   \uput{0.15}[47](5.56,2.34){$p$} 
   \psline[linewidth=0.03](0.806,1.6)(2.299,3.233) 
   \pscurve[linewidth=0.03](2.116,3.033)(2.208,3.133)(2.299,3.233)(2.401,3.324)
   (3.218,3.715)(3.533,4.564)(3.615,4.673)(3.706,4.773)(3.798,4.873) 
   \psline[linewidth=0.03](3.615,4.673)(5.108,6.306) 
   \psline[linewidth=0.07]{c->}(3.355,3.976)(3.397,4.11) 
\end{pspicture}  
   \caption{\label{fig:prop-p-q} Proof of Proposition~\ref{prop:p-q}} 
\end{figure}

\smallskip 

Let $x_{0}$ and $y_{0}$ be the two intersection points of the strait line $(p q)$ with $\bD$. 
As the images of $g_{0}$-geodesics are affine segments, we have
$\s_{0}(x_{0} , p) = y_{0} = \s_{0}(x_{0} , q)$, which shows that
$V_{p}^{0}(x_{0}) = y_{0} - x_{0} = V_{q}^{0}(x_{0})$ and thus $f_{0}(x_{0}) = 0$. 
Similarly $f_{0}(y_{0}) = 0$, and we actually have $f_{0}^{-1}(0) = \{ x_{0} , y_{0} \}$. 
So, in order to prove $0$ is a regular value of $f_{0}$, we
have to show both $x_{0}$ and $y_{0}$ are regular points of $f_{0}$.

\smallskip 

Let us prove $x_{0}$ is a regular point of $f_{0}$ (arguments are the same for $y_{0}$). 

For any $u \in T_{x_{0}}\bD$, we have 
\begin{eqnarray*}
   \Tg{x_{0}}{f_{0}}{u} 
   & = & 
   \det{V_{p}^{0}(x_{0}) \, , \, \Tg{x_{0}}{V_{q}^{0}}{u}} 
   + \det{\Tg{x_{0}}{V_{p}^{0}}{u} \, , \, V_{q}^{0}(x_{0})} \\
   & = & 
   \det{y_{0} - x_{0} \, , \, \Tg{x_{0}}{V_{q}^{0}}{u} - \Tg{x_{0}}{V_{p}^{0}}{u}} \\
   & = & 
   \det{\! y_{0} - x_{0} \, , \, \pder{\s_{0}}{x}(x_{0} , q) \act u 
   - \pder{\s_{0}}{x}(x_{0} , p) \act u \!}.
\end{eqnarray*} 

\smallskip 

Now $\s_{0}(x , p) = p + \a(x) (p - x)$ and $\s_{0}(x , q) = q + \b(x) (q - x)$
for any $x \in \bD$, where $\a : \bD \to \RR$ and $\b : \bD \to \RR$ are
functions that are $\Cl{\infty}$ (since $\s_{0}$ is). 

\smallskip 

For any $u \in T_{x_{0}}\bD$ with $u \neq 0$, we have
$$
\pder{\s_{0}}{x}(x_{0} , p) \act u - \pder{\s_{0}}{x}(x_{0} , p) \act u
= 
\left( \Tg{x_{0}}{\a}{u} \right) \! (p - x_{0}) 
- \left( \Tg{x_{0}}{\b}{u} \right) \! (q - x_{0}) 
+ (\b(x_{0}) - \a(x_{0})) u,
$$ 
and thus
$$
\Tg{x_{0}}{f_{0}}{u} = (\b(x_{0}) - \a(x_{0})) \det{\left( y_{0} - x_{0} \, , \, u \right)}.
$$

\smallskip 

As $u$ is not parallel to $y_{0} - x_{0}$ by strict convexity of $\cD$, 
the point $x_{0}$ will be regular for $f_{0}$ if $\b(x_{0}) \neq \a(x_{0})$. 
But if $\b(x_{0})$ were equal to $\a(x_{0})$, we would get 
$y_{0} = \s_{0}(x_{0} , p) = p + \a(x_{0}) (p - x_{0})$ and 
$y_{0} = \s_{0}(x_{0} , q) = q + \b(x_{0}) (q - x_{0})$. 
Therefore $(1 - \a(x_{0})) (q - p) = 0$, and hence $\a(x_{0}) = \b(x_{0}) = 1$ since $p \neq q$. 
This would then imply $2 p = x_{0} + y_{0} = 2 q$, 
contradicting the fact that $p$ and $q$ are distinct points.
Thus $0$ is a regular value of $f_{0}$. 

\smallskip

Then, from Lemma~\ref{lem:reg-pts} with $K := \bD$, there exists $b \in (0 , a / 2)$ 
such that $0$ is a regular value of $f_{\l}$ for all $\l \in (-2 b , 2 b)$. 
This implies by Lemma~\ref{lem:reg-val} that $0$ is a regular value of $\rest{F}{(-2 b , 2 b) \times \bD}$, 
and hence that any $\l \in (-2 b , 2 b)$ is a regular value of $\rest{\pi}{W} : W \to (-2 b , 2 b)$, 
where $\pi : (-2 b , 2 b) \times \bD \to (-2 b , 2 b)$ is the natural projection 
and $W = (\rest{F}{(-2 b , 2 b) \times \bD})^{-1}(0)$.

\smallskip 

But $\dim{W} = 1 = \dim{(-2 b , 2 b)}$, so $\rest{\pi}{W}$ is a local diffeomorphism, 
thus a local homeomorphism, which implies that $\pi : W_{0} \to [-b , b]$ is also a local homeomorphism, 
where $W_{0} := W \cap ([-b , b] \times \bD)$. 

\smallskip 

Next, as $W_{0}$ is compact (since $W$ is closed in $(-2 b , 2 b) \times \bD$ 
and $[-b , b] \times \bD$ is compact) 
and $[-b , b]$ is connected, we get that $\pi : W_{0} \to [-b , b]$ is a covering map 
with a finite number of sheets by Lemma~\ref{lem:covering}.

\smallskip 

Since $\pi^{-1}(0) \cap W_{0} = \{ (0 , x_{0}) , (0 , y_{0}) \}$, 
we have $\card{\pi^{-1}(\l) \cap W_{0}} = 2$ for all $\l \in [-b , b]$.

Hence, given $\l \in [-b , b]$, there are exactly two distinct points $x, y \in \bD$ such that
$$
\det{V_{p}^{\l}(x) , V_{q}^{\l}(x)} = 0 
\quad \mbox{and} \quad  
\det{V_{p}^{\l}(y) , V_{q}^{\l}(y)} = 0.
$$
But this means that $\s_{\l}(x , p) = \s_{\l}(x , q) = y$ by Remark~\ref{rem:end-point}.  
So, there exist $t_{0}, t_{1} \in \RR$ such that $p = \f_{x}^{\l}(y , t_{0})$ 
and $q = \f_{x}^{\l}(y , t_{1})$ with $t_{0} \leq t_{1}$ after a suitable labelling of $x$ and $y$. 
As $\f_{x}^{\l}(y , \cdot) : \RR \to \Rn{2}$ is injective by point~(1) in Corollary~\ref{cor:basic-2}, 
such $t_{0}$ and $t_{1}$ are unique with $0 < t_{0} < t_{1} < 1$. 

\smallskip 

This proves Proposition~\ref{prop:p-q}.
\end{proof}

\bigskip

\begin{proof}[Proof of Proposition~\ref{prop:p-v}]~\\ 
\indent Fix a point $p \in \cD$ and a vector $V \in \Rn{2} \setmin \{ 0 \}$, 
and consider the $\Cl{\infty}$ function $F : (-a , a) \times \bD \to \RR$ defined by
$\disp F(\l , x) = f_{\l}(x) := \det{\! V \, , \, \pder{\f_{x}^{\l}}{t}(y , t) \!}$ 
with $\F_{\l}(x , y , t) = (x , p) \in N$. 
In other words, $\disp f_{\l}(x) := \det{\! V \, , \, \pder{\F_{\l}}{t}(\F_{\l}^{-1}(x , p)) \!}$ 
for all $(\l , x) \in (-a , a) \times \bD$ (see Figure~\ref{fig:prop-p-v}). 

\begin{figure}[h] 

\begin{pspicture}(8,6.5) 
   \psdots[dotstyle=*,dotscale=0.3](4,3) 
   \uput{0.1}[90](4,3){$0$} 
   \psarc[linewidth=0.02](4,3){3}{1.5}{347} 
   \uput{0.1}[45](6.236,5){$\bD$} 
   \pscircle[linewidth=0.01](4,3){1.5} 
   \uput{1.5}[70](4,3){$\bD(\! R / 2)$} 
   \psdots[dotstyle=*,dotscale=0.9](1.172,2)(6.828,2) 
   \uput{0.15}[240](1.172,2){$x$} 
   \uput{0.15}[300](6.828,2){$y$} 
   \psline[linewidth=0.04](0.6,2)(1.6,2) 
   \pscurve[linewidth=0.04](1.5,2)(1.55,2)(1.6,2)(1.65,2)(2,2.05)(2.5,2.4)(3.07,3.51)
   (3.5,3)(4,4)(4.5,3)(4.93,3.51)(5.5,2.4)(6,2.05)(6.35,2)(6.4,2)(6.45,2)(6.5,2) 
   \psline[linewidth=0.04](6.4,2)(7.4,2) 
   \psline[linewidth=0.08]{c->}(2.709,2.94)(2.749,3.1) 
   \uput{0.15}[310](2.7,3){$\f_{x}^{\l}(y , \cdot)$} 
   \newgray{g7}{0.7} 
   \psline[linewidth=0.03,linecolor=g7]{c-}(5.56,2.34)(5.2,1) 
   \psline[linewidth=0.07,linecolor=g7]{c->}(5.204,1.02)(5.165,0.88) 
   \uput{0.17}[115](5.2,1){$V$} 
   \psline[linewidth=0.03,linecolor=g7]{c-}(5.56,2.34)(7.56,0.5) 
   \psline[linewidth=0.07,linecolor=g7]{c->}(7.5,0.557)(7.6,0.467) 
   \uput{0.28}[63](7.56,0.5){$\pder{\f_{x}^{\l}}{t}(y , t)$} 
   \psdots[dotstyle=*,dotscale=0.9](5.56,2.34) 
   \uput{0.15}[47](5.56,2.34){$p = \f_{x}^{\l}(y , t)$} 
\end{pspicture}  
   \caption{\label{fig:prop-p-v} Proof of Proposition~\ref{prop:p-v}} 
\end{figure}

\medskip 

Let $x_{0}$ and $y_{0}$ be the two intersection points of the straight line $p + \RR V$ with $\bD$, 
and write $G_{x}^{0}(y , t) = x + \om(x , y , t) (y - x)$
for all $(x , y , t) \in ((\bD \times \bD) \setmin \D) \times \RR$, 
where $\om$ is the function introduced in Step~1 in the proof of Proposition~\ref{prop:diffeo}. 
As we have $f_{0}^{-1}(0) = \{ x_{0} , y_{0} \}$, the value $0$ will be regular for $f_{0}$ 
if $x_{0}$ and $y_{0}$ are regular points of $f_{0}$.

\smallskip 

So, let us prove $x_{0}$ is a regular point of $f_{0}$ (arguments are the same for $y_{0}$). 

\medskip 

Using the diffeomorphism $\F_{0} : M \to N$ (see point~(2) in Proposition~\ref{prop:diffeo}), 
we can write 
$$
f_{0}(x) 
= 
\det{\! V \, , \, \left\{ \pder{\om}{t}(\F_{0}^{-1}(x , p)) \right\} \! (\s_{0}(x , p) - x) \!}
$$
for all $x \in \bD$.

Thus, for any $u \in T_{x_{0}}\bD$, we have 
\begin{eqnarray*}
   \Tg{x_{0}}{f_{0}}{u} 
   & = & 
   \det{\! V \, , \, \left\{ \Tg{\F_{0}^{-1}(x_{0} , p)}{\!\! \left( \! \pder{\om}{t} \! \right) \!}
   {\{ \Tg{(x_{0} , p)}{\F_{0}^{-1}}{u} \}} \right\} \! (\s_{0}(x_{0} , p) - x_{0}) \!} \\
   & & 
   + \det{\! V \, , \, \pder{\om}{t}(\F_{0}^{-1}(x_{0} , p)) \! 
   \left\{ \pder{\s_{0}}{x}(x_{0}, p) \act u - u \right\} \!} \\
   & = & 
   \det{\! V \, , \, \left\{ \Tg{\F_{0}^{-1}(x_{0} , p)}{\!\! \left( \! \pder{\om}{t} \! \right) \!}
   {\{ \Tg{(x_{0} , p)}{\F_{0}^{-1}}{u} \}} \right\} \! (y_{0} - x_{0}) \!} \\
   & & 
   + \pder{\om}{t}(\F_{0}^{-1}(x_{0} , p)) 
   \det{\! V \, , \, \pder{\s_{0}}{x}(x_{0} , p) \act u - u \!}, \\
\end{eqnarray*} 
that is,
$$
\Tg{x_{0}}{f_{0}}{u} 
= 
\pder{\om}{t}(\F_{0}^{-1}(x_{0} , p)) 
\det{\! V \, , \, \pder{\s_{0}}{x}(x_{0} , p) \act u - u \!}
$$
since $y_{0} - x_{0}$ is parallel to $V$.

\medskip 

As $\disp \pder{\om}{t}(\F_{0}^{-1}(x_{0} , p)) \neq 0$ 
(from property~$(ii)$ for $\om$ in Step~1 in the proof of Proposition~\ref{prop:diffeo}), 
$x_{0}$ will be a regular point of $f_{0}$ if the vector $\disp \pder{\s_{0}}{x}(x_{0} , p) \act u - u$ 
is not parallel to $V$ for $u \in T_{x_{0}}\bD$ with $u \neq 0$. 
In order to prove this, just write $\s_{0}(x , p) = p + \a(x) (p - x)$ for any $x \in \bD$, where
$\a : \bD \to \RR$ is a function that is $\Cl{\infty}$ (since $\s_{0}$ is) and positive 
(since $p$ is in the affine segment $]x , \s_{0}(x , p)[$). 
Then, for $u \in T_{x_{0}}\bD$ with $u \neq 0$, we have
$$
\pder{\s_{0}}{x}(x_{0} , p) \act u - u 
= 
\left( \Tg{x_{0}}{\a}{u} \right) \! (p - x_{0}) - (1 + \a(x_{0})) u,
$$
which is not parallel to $V$ since $p - x_{0}$ is parallel to $V$ 
and $u$ is not (by strict convexity of $\cD$). 

\medskip 

Hence we have shown that $0$ is a regular value of $f_{0}$, 
and we conclude exactly the same as in the end 
of the proof of Proposition~\ref{prop:p-q} with $c$ instead of $b$.
\end{proof}

\bigskip

We can now use all we proved in this section to eventually obtain what we wanted: 

\begin{theorem} \label{thm:admissible}
   There exists a number $\e_{0} \in (0 , a)$ such that for all $\l \in (-\e_{0} , \e_{0})$, 
   the set $\gS_{\l} := \{ \c^{\l}_{(x , y)} \st (x , y) \in (\bD \times \bD) \setmin \D \}$ 
   of parameterized curves $\c^{\l}_{(x , y)} : [0 , 1] \to \Rn{2}$ defined by 
   $\c^{\l}_{(x , y)}(t) := \f_{x}^{\l}(y , t)$ is admissible for $\cD$ and satisfies Property~(C).
\end{theorem}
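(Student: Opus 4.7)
The plan is to set $\e_{0} := \min\{b , c\} \in (0 , a)$, where $b$ and $c$ are the constants supplied by Propositions~\ref{prop:p-q} and~\ref{prop:p-v} respectively, and then to verify each defining condition of admissibility and Property~(C) for $\gS_{\l}$ by directly invoking the results already established in this subsection. Essentially no fresh analytic input is needed, since the technical core has been packaged into Proposition~\ref{prop:diffeo} and its consequences.

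First I would dispatch the four conditions of Definition~\ref{def:admissible}. Condition~(1) is obtained on the nose: each $\c^{\l}_{(x , y)}$ is $\Cl{\infty}$ by construction of $\f_{x}^{\l}$ from the smooth exponential maps and the cut-off $\r$; it is regular and injective by point~(1) of Corollary~\ref{cor:basic-2}; and $\c^{\l}_{(x , y)}((0 , 1)) \inc \cD$ by point~(3) of Lemma~\ref{lem:basic-1}. Condition~(2) is immediate, because $\r(0) = \r(1) = 0$ (by properties~(2) and~(3) in Equation~\ref{equ:rho}) forces $\c^{\l}_{(x , y)}(0) = G_{x}^{0}(y , 0) = x$ and $\c^{\l}_{(x , y)}(1) = G_{x}^{0}(y , 1) = y$, both lying in $\bD$. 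Condition~(3) is exactly Proposition~\ref{prop:p-q} (applicable since $\e_{0} \leq b$), and condition~(4) is exactly Proposition~\ref{prop:p-v} (applicable since $\e_{0} \leq c$).

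Next I would turn to Property~(C). The `end point map' $\s : \bD \times \cD \to \bD$ associated with $\gS_{\l}$ is just the restriction of $\s_{\l}$ to $\bD \times \cD$, so I first need the inclusion $\bD \times \cD \inc N$. For $(x , p) \in \bD \times \cD$, the Cauchy--Schwarz inequality yields $\scal{x}{p} \leq |x| \, |p| < R \cdot R = R^{2}$, hence $\scal{x}{p - x} = \scal{x}{p} - R^{2} < 0$, so $(x , p) \notin \cT$, as required. Point~(1) of Property~(C) is then an instance of Proposition~\ref{prop:end-point}, and point~(2) is precisely point~(2) of Corollary~\ref{cor:basic-2}, applied to $(x , p) \in \bD \times \cD \inc N$ with $y = \s_{\l}(x , p)$ and $t \in (0 , 1)$ the unique parameter furnished by point~(3) of Lemma~\ref{lem:basic-1} together with the injectivity of $\f_{x}^{\l}(y , \cdot)$.

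No step of this plan constitutes a genuine obstacle: every real difficulty has already been absorbed into the diffeomorphism statement of Proposition~\ref{prop:diffeo} and the two regular-value arguments underlying Propositions~\ref{prop:p-q} and~\ref{prop:p-v}. The only mildly subtle check is the inclusion $\bD \times \cD \inc N$, which is what ensures that the end point map demanded by Arcostanzo's formalism coincides, on $\bD \times \cD$, with the map $\s_{\l}$ built from the inverse of $\F_{\l}$.
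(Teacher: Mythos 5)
Your proposal is correct and follows essentially the same route as the paper: the paper's proof of Theorem~\ref{thm:admissible} simply sets $\e_{0} := \frac{1}{2}\min\{b , c\}$ and cites point~(1) of Corollary~\ref{cor:basic-2} together with Propositions~\ref{prop:p-q} and~\ref{prop:p-v} for admissibility, and Proposition~\ref{prop:end-point} together with point~(2) of Corollary~\ref{cor:basic-2} for Property~(C). Your version merely spells out the bookkeeping more explicitly (in particular the inclusion $\bD \times \cD \inc N$, which the paper leaves implicit), and the harmless difference in the choice of $\e_{0}$ ($\min\{b,c\}$ versus half of it) is immaterial.
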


\medskip

\begin{proof}~\\ 
\indent Define $\disp \e_{0} := \frac{1}{2} \min{\{ b , c \}} > 0$, where $b$ and $c$ are given respectively 
by Proposition~\ref{prop:p-q} and Proposition~\ref{prop:p-v}, and let $\l \in (-\e_{0} , \e_{0})$.

\smallskip 

The fact that the set $\gS_{\l}$ is admissible for $\cD$ follows from point~(1) 
in Corollary~\ref{cor:basic-2}, 
Proposition~\ref{prop:p-q} and Proposition~\ref{prop:p-v}.

\smallskip 

Property~(C) for $\gS_{\l}$ is a consequence of Proposition~\ref{prop:end-point} 
and point~(2) in Corollary~\ref{cor:basic-2}.
\end{proof}

\bigskip
\bigskip 


\subsection{Towards the Main Theorem} \label{sec:end} 
~

\medskip 

At this stage of the paper and following Arcostanzo's construction in~\cite{Arc94}, 
let us define for each $\l \in (-\e_{0} , \e_{0})$ 
the function $\sF_{\l} : T\cD = \cD \times \Rn{2} \to \RR$ by setting
\begin{equation*}
   \sF_{\l}(p , v) 
   :=
   \frac{1}{4} \! \int{\bD}{}{\ \ppder{d_{g_{0}}}{x}{y}(x , \s_{\l}(x , p)) \! 
   \left| \pder{\s_{\l}}{p}(x , p) \act v \right| \!\!}{x}
\end{equation*}
for all $(p , v) \in T\cD$.

\smallskip 

Since the distance function $d_{g_{0}}$ is $\Cl{\infty}$ on $(\bD \times \bD) \setmin \D$ 
with $\ppder{d_{g_{0}}}{x}{y}(x , y) > 0$ for all $(x , y) \in (\bD \times \bD) \setmin \D$ 
(point~(2) in Remark~\ref{rem:Arcostanzo}), 
we get from Theorem~\ref{thm:Arcostanzo} and Theorem~\ref{thm:admissible} that 
$\sF_{\l}$ is a \emph{smooth} Finsler metric on $\cD$ such that $d_{\sF_{\l}} = d_{g_{0}}$.

\smallskip

On the other hand, since we have $\c^{0}_{(x , y)}(t) = \f_{x}^{0}(y , t) = G_{x}^{0}(y , t)$ 
for all $(x , y) \in (\bD \times \bD) \setmin \D$ and $t \in [0 , 1]$, the set $\gS_{0}$ 
coincides with the set of maximal geodesics of $g_{0}$ in $\bar{\cD}$
after reparametrization by $[0 , 1]$. 
Thus, as mentionned in point~(4) of Remark~\ref{rem:Arcostanzo}, 
$\sF_{0}$ \emph{equals} the restriction to $T\cD$ of the Finsler metric $F_{0}$ on $\Hn{2}$ 
associated with $g_{0}$. 

\bigskip

We are now going to give some properties about $\sF_{\l}$ that will lead to the Main Theorem.

\medskip

The first one shows that $\sF_{\l}$ agrees with $F_{0}$ near the boundary $\bD$ of $\cD$, 
which is not a surprise since our construction of $\sF_{\l}$ has especially been made for this. 
Moreover, we prove that the region in $\cD$ near the boundary of $\bD$ 
on which $\sF_{\l}$ agrees with $F_{0}$ can actually be chosen in such a way 
that it does \emph{not} depend on the parameter $\l$. 
This uniformity will later ensure that the family of Finsler metrics we will obtain 
in the Main Theorem is \emph{smooth} with respect to $\l$. 

\begin{proposition} \label{prop:hyp}
   There exists $R_{0} \in (R/2 , R)$ such that for every $\l \in (-\e_{0} , \e_{0})$, 
   the Finsler metric $\sF_{\l}$ coincides with $F_{0}$ on $(\cD \setmin \bar{\cD(R_{0})}) \times \Rn{2}$.
\end{proposition}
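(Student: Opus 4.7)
The strategy is to reduce the claim, via Arcostanzo's explicit formula in Theorem~\ref{thm:Arcostanzo}, to a pointwise identity for the end-point maps: for $p \in \cD$ sufficiently close to $\bD$ and every $\l \in (-\e_{0} , \e_{0})$, I want to show that $\s_{\l}(\cdot , p) \equiv \s_{0}(\cdot , p)$ on $\bD$. Since $\sF_{0} = F_{0}$ on $T\cD$ (point~(3) of Remark~\ref{rem:Arcostanzo}), and since such a pointwise identity on an open set of $p$ automatically yields the same equality for the $p$-derivatives of $\s_{\l}$ and $\s_{0}$, matching the integrands in Arcostanzo's formula will give $\sF_{\l}(p , v) = F_{0}(p , v)$ directly for every $v \in \Rn{2}$.

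The geometric observation underpinning everything is the following: whenever the chord $[x , y]$ (with $x , y \in \bD$, $x \neq y$) does \emph{not} meet $\cD(R / 2)$, the metric $g_{\l}$ coincides with $g_{0}$ along it, so $G_{x}^{\l}(y , \cdot) = G_{x}^{0}(y , \cdot)$ and hence $\f_{x}^{\l}(y , \cdot)$ is itself that chord; for such pairs the identity $\s_{\l}(x , p) = \s_{0}(x , p)$ holds trivially at every $p$ on the chord. Deviations from the chord can occur only when $(x , y)$ lies in the set
\begin{equation*}
   \cB := \left\{ (x , y) \in \bD \times \bD \st [x , y] \cap \cD(R / 2) \neq \Oset \right\}.
\end{equation*}
Since any chord of $\bD$ cutting $\cD(R / 2)$ has its two endpoints at Euclidean distance at least $R \sqrt{3}$ apart, $\bar{\cB}$ is a compact subset of $(\bD \times \bD) \setmin \D$.

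The central uniform estimate is that the ``middle part'' of the interpolated curves $\f_{x}^{\l}(y , \cdot)$ associated with pairs in $\bar{\cB}$ stays away from $\bD$: by point~(3) of Lemma~\ref{lem:basic-1}, $|\f_{x}^{\l}(y , t)| < R$ for every $(\l , (x , y) , t) \in [-\e_{0} , \e_{0}] \times \bar{\cB} \times [1/4 , 3/4]$, so continuity together with compactness yield a strict bound $R_{0}' < R$. I would then set $R_{0} := \max \{ R_{0}' , 3 R / 4 \} \in (R / 2 , R)$. The main obstacle in the whole proof lies exactly in securing this strict inequality $R_{0}' < R$: it rests on the fact that $\bar{\cB}$ avoids the diagonal $\D$, for otherwise one could let $x \to y$ and produce chord-like curves whose midpoints approach a boundary point of $\cD$, spoiling the uniform bound.

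The conclusion is then a short case analysis. Fix $p$ with $|p| > R_{0}$, take an arbitrary $x \in \bD$, and set $y := \s_{\l}(x , p)$ and $t_{0} \in (0 , 1)$ with $\f_{x}^{\l}(y , t_{0}) = p$. If $(x , y) \notin \bar{\cB}$, then $\f_{x}^{\l}(y , \cdot)$ is the chord $[x , y]$ by the observation above, so $y = \s_{0}(x , p)$. Otherwise the uniform estimate together with $|p| > R_{0} \geq R_{0}'$ forces $t_{0} \in (0 , 1/4) \cup (3/4 , 1)$; on this range $\r(t_{0}) = 0$ by properties~(2) and~(3) of Equation~\ref{equ:rho}, so $\f_{x}^{\l}(y , t_{0}) = G_{x}^{0}(y , t_{0})$ lies on the chord $[x , y]$, giving again $y = \s_{0}(x , p)$. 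Since $x$ was arbitrary, $\s_{\l}(\cdot , p) \equiv \s_{0}(\cdot , p)$ on $\bD$ for every such $p$, and Arcostanzo's integral formula then closes the argument.
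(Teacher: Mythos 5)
Your argument is correct, and its endgame -- the dichotomy according to whether the chord $[x , y]$ is deformed, the identification $\s_{\l}(x , p) = \s_{0}(x , p)$ for all $x \in \bD$ and all $p$ in the annulus, and the passage to $\sF_{\l} = F_{0}$ via equality of the integrands (including the $p$-derivatives, which you rightly note follow automatically since the identity holds on an open set of $p$) -- is exactly the paper's. Where you genuinely diverge is in how the uniform radius $R_{0}$ is produced. The paper fixes $x_{0} \in \bD$, uses Lemma~\ref{lem:diff-perturb} to make $h(\l , (x , y , t)) = (\l , (x , \f_{x}^{\l}(y , t)))$ a homeomorphism, deduces that the image of the open set $\{ t > 3/4 \}$ contains a collar around the compact arc $\{ y \in \bD \st |x_{0} - y| \geq \sqrt{3} R \}$, and then invokes the $\O(\Rn{2})$-invariance of Lemma~\ref{lem:rad-sym} to rotate this collar to every $x$. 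You instead bound the complementary piece from the inside: since your set $\bar{\cB}$ avoids the diagonal (chords of $\bD$ meeting $\cD(R/2)$ have Euclidean length greater than $\sqrt{3} R$), point~(3) of Lemma~\ref{lem:basic-1} puts the middle portion $t \in [1/4 , 3/4]$ of every relevant curve strictly inside $\cD$, and compactness of $[-\e_{0} , \e_{0}] \times \bar{\cB} \times [1/4 , 3/4]$ yields the strict bound $R_{0}' < R$. This buys a shorter and more elementary proof of the proposition: it dispenses with both Lemma~\ref{lem:diff-perturb} and the rotational symmetry of Lemma~\ref{lem:rad-sym} at this stage (the paper still needs the latter elsewhere). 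Two small points to make explicit in a final write-up: the fact that $t_{0} \in (0 , 1)$ in your case analysis is itself a consequence of $p \in \cD$ and point~(3) of Lemma~\ref{lem:basic-1}; and in your first case one should say that the chord avoids $\bar{\cD(R/2)}$ (which $(x , y) \notin \bar{\cB}$ does give), so that $g_{\l}$ and $g_{0}$ agree on an open neighborhood of it and unique geodesy of $g_{\l}$ forces $G_{x}^{\l}(y , \cdot) = G_{x}^{0}(y , \cdot)$.
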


\medskip

In order to establish this fact, we will need the following useful lemma which proves 
that $\sF_{\l}$ is invariant under the Euclidean isometries 
since all the objects we constructed so far have as much symmetry as the Euclidean circle $\bD$ has. 

\begin{lemma} \label{lem:rad-sym}
   For any $\l \in (-\e_{0} , \e_{0})$ and any linear Euclidean isometry $A \in \O(\Rn{2})$,
   we have $\sF_{\l}(A(p) , A(v)) = \sF_{\l}(p , v)$ for all $p \in \cD$ and $v \in \Rn{2}$.
\end{lemma}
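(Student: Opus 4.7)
The plan is to trace the $\O(\Rn{2})$-symmetry through each ingredient entering the integral formula for $\sF_{\l}$ and then conclude by a change of variable on $\bD$. I will fix $A \in \O(\Rn{2})$ throughout and denote its action on tangent vectors again by $A$ (since $A$ is linear).

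First I would establish the equivariance of the whole construction under $A$. Since the Klein metric $g_{0}$ is $\O(\Rn{2})$-invariant and the function $f$ entering $\a_{\l}$ was built to be $\O(\Rn{2})$-invariant (Proposition~\ref{prop:conformal} and the paragraphs preceding it), the Riemannian metric $g_{\l} = \a_{\l} g_{0}$ satisfies $A^{*} g_{\l} = g_{\l}$. Hence the exponential map is equivariant: $A(\exp^{\l}_{x}(v)) = \exp^{\l}_{A(x)}(A(v))$ for every $(x , v) \in T\Hn{2}$. Applying this to the definitions gives $G^{\l}_{A(x)}(A(y) , t) = A(G^{\l}_{x}(y , t))$ for both $\l$ and $0$, so by the convex combination defining $\f$,
\begin{equation*}
   \f^{\l}_{A(x)}(A(y) , t) = A(\f^{\l}_{x}(y , t))
   \qquad \text{for all } x, y \in \Hn{2},~t \in \RR.
\end{equation*}
From the definition of the `end point map' $\s_{\l}$ via $\F_{\l}$, this equivariance immediately yields $\s_{\l}(A(x) , A(p)) = A(\s_{\l}(x , p))$ for every $(x , p) \in N$, and differentiating in $p$ gives $\pder{\s_{\l}}{p}(A(x) , A(p)) \act A(v) = A \! \left( \pder{\s_{\l}}{p}(x , p) \act v \right)$.

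Next I would perform the substitution $x \mapsto A(x)$ in the integral defining $\sF_{\l}(A(p) , A(v))$. Since $A$ restricts to a Euclidean isometry of the circle $\bD = R \Sn{1}$, the canonical measure $\mathrm{d} x$ is $A$-invariant. The Euclidean norm $|\cdot|$ on $\Rn{2}$ is $A$-invariant, so
\begin{equation*}
   \left| \pder{\s_{\l}}{p}(A(x) , A(p)) \act A(v) \right|
   =
   \left| A \! \left( \pder{\s_{\l}}{p}(x , p) \act v \right) \right|
   =
   \left| \pder{\s_{\l}}{p}(x , p) \act v \right| \! .
\end{equation*}
The remaining ingredient is the factor $\ppder{d_{g_{0}}}{x}{y}(A(x) , A(\s_{\l}(x , p)))$. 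I would verify that under the angular parametrization of $\bD$, the mixed partial is $A$-invariant: writing $f(t , s) := d_{g_{0}}(R e^{i t} , R e^{i s})$, a rotation $A$ acts by $(t , s) \mapsto (t + t_{0} , s + t_{0})$ and a reflection by $(t , s) \mapsto (-t , -s)$. In either case the identity $d_{g_{0}}(A(x) , A(y)) = d_{g_{0}}(x , y)$ yields an invariance of $f$ whose mixed partial $\ppder{f}{t}{s}$ is preserved (the two sign flips in the reflection case compensate). Hence $\ppder{d_{g_{0}}}{x}{y}(A(x) , A(y)) = \ppder{d_{g_{0}}}{x}{y}(x , y)$ for $x \neq y$.

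Combining these three invariances inside the integral reduces $\sF_{\l}(A(p) , A(v))$ to the original integral defining $\sF_{\l}(p , v)$, which proves the lemma. The main delicate point, and essentially the only nontrivial one, is the last step: confirming that the mixed partial derivative of $d_{g_{0}}$ on $\bD \times \bD$, as defined via the angular parameter in Remark~\ref{rem:Arcostanzo}~(1), is genuinely invariant under both rotations and reflections of the circle; all other steps are formal consequences of the $\O(\Rn{2})$-equivariance of the construction.
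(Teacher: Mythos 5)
Your proposal is correct and follows essentially the same route as the paper: establish the $A$-equivariance of $g_{\l}$, hence of $G^{\l}_{x}(y,\cdot)$, $\f^{\l}_{x}(y,\cdot)$ and $\s_{\l}$, and then change variables $x \mapsto A(x)$ in the defining integral using the $A$-invariance of $\mathrm{d}x$, $|\cdot|$ and $d_{g_{0}}$. Your extra check that the mixed partial $\ppder{d_{g_{0}}}{x}{y}$ is invariant under reflections as well as rotations (the two sign flips cancelling) is a point the paper leaves implicit, but it does not change the argument.
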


\medskip

\begin{remark} \label{rem:Klein} 
Before proving this lemma, recall for the reader's convenience 
that the Klein metric $g_{0}$ whose associated Finsler metric is $F_{0}$ is given by
\begin{equation*}  
   g_{0}(p) \act (v , v) 
   = 
   F_{0}(p , v)^{2} 
   = 
   \frac{|v|^{2}}{1 - |p|^{2}} + \frac{\scal{v}{p}^{2}}{(1 - |p|^{2})^{2}}
\end{equation*}
for all $p \in \Hn{2}$ and $v \in \Rn{2}$.
\end{remark}

\bigskip

\begin{proof}[Proof of Lemma~\ref{lem:rad-sym}]~\\ 
\indent Fix $\l \in \RR$ and $A \in \O(\Rn{2})$. 

\smallskip 

For every $p \in \Hn{2}$, we have $|A(p)| = |p|$, and thus $\a_{\l}(|A(p)|) = \a_{\l}(|p|)$.
Since the Klein metric $g_{0}$ on $\Hn{2}$ is invariant under $A$ 
(\emph{i.e.}, $A^{*}g_{0} = g_{0}$) by the formula for $g_{0}$ in Remark~\ref{rem:Klein},
we get that $g_{\l}$ is $A$-invariant too from the very definition of $g_{\l}$. 

\medskip 

Hence, given any $x , y \in \Hn{2}$ and any $t \in \RR$, 
we have $G_{A(x)}^{0}(A(y) , t) = A(G_{x}^{0}(y , t))$
and $G_{A(x)}^{\l}(A(y) , t) = A(G_{x}^{\l}(y , t))$,
which implies
\begin{equation} \label{equ:rad-sym} 
\begin{split} 
   \f_{A(x)}^{\l}(A(y) , t) 
   & =  
   (1 - \r(t)) G_{A(x)}^{0}(A(y) , t) + \r(t) G_{A(x)}^{\l}(A(y) , t) \\
   & = 
   (1 - \r(t)) A(G_{x}^{0}(y , t)) + \r(t) A(G_{x}^{\l}(y , t)) \\
   & = 
   A \! \Big( \! (1 - \r(t)) G_{x}^{0}(y , t) + \r(t) G_{x}^{\l}(y , t) \! \Big) \\
   & =  
   A(\f_{x}^{\l}(y , t))
\end{split}
\end{equation}
for all $t \in \RR$.

\medskip 

So, for every $\l \in (-\e_{0} , \e_{0})$ and $(x , p) \in N$, 
we have $\s_{\l}(A(x) , A(p)) = A(\s_{\l}(x , p))$,
and therefore, for every $\l \in (-\e_{0} , \e_{0})$, $p \in \cD$ and $v \in \Rn{2}$, one has
\begin{eqnarray*}
   \sF_{\l}(A(p) , A(v)) 
   & = &
   \frac{1}{4} \! \int{\bD}{}{\ \ppder{d_{g_{0}}}{x}{y}(x , \s_{\l}(x , A(p))) \! 
   \left| \pder{\s_{\l}}{p}(x , A(p)) \act A(v) \right| \!\!}{x} \\
   & = &
   \frac{1}{4} \! \int{\bD}{}{\ \ppder{d_{g_{0}}}{x}{y}(A(x) , \s_{\l}(A(x) , A(p))) \! 
   \left| \pder{\s_{\l}}{p}(A(x) , A(p)) \act A(v) \right| \!\!}{x} \\
   & & 
   \mbox{(since the canonical Euclidean measure $\textrm{d}x$ on $\bD$ is $A$-invariant)} \\
   & = &
   \frac{1}{4} \! \int{\bD}{}{\ \ppder{d_{g_{0}}}{x}{y}(A(x) , A(\s_{\l}(x , p))) \! 
   \left| A(\pder{\s_{\l}}{p}(x , p) \act v) \right| \!\!}{x} \\
   & = &
   \frac{1}{4} \! \int{\bD}{}{\ \ppder{d_{g_{0}}}{x}{y}(x , \s_{\l}(x , p)) \! 
   \left| \pder{\s_{\l}}{p}(x , p) \act v \right| \!\!}{x} \\ 
   & & 
   \mbox{(since $d_{g_{0}}$ and $|\cdot|$ are $A$-invariant)} \\
   & = &
   \sF_{\l}(p , v). 
\end{eqnarray*}

\smallskip 

This ends the proof of Lemma~\ref{lem:rad-sym}.
\end{proof}

\bigskip

\begin{proof}[Proof of Proposition~\ref{prop:hyp}]~\\ 
\indent Applying Lemma~\ref{lem:diff-perturb} with $\L := (-a , a)$ 
and $f_{\l} := \F_{\l}$ (which is a diffeomorphism by point~(2) in Proposition~\ref{prop:diffeo}), 
we get that $h : (-a , a) \times M \to (-a , a) \times N$ defined by
$h(\l , (x , y , t)) := (\l , (x , \f_{x}^{\l}(y , t)))$ is a diffeomorphism,
hence a homeomorphism.

\medskip

So, consider the open set 
$U := \{ (x , y) \in \bD \times \bD \st |x - y| > R \} \times (3/4 , +\infty)$ in $M$, 
fix $x_{0} \in \bD$, and define the compact set 
$K := \{ y \in \bD \st |x_{0} - y| \geq \sqrt{3} R \} \inc \bD$. 

\smallskip 

Since $\{ x_{0} \} \times K \times \{ 1 \} \inc U$, 
the compact set 
$$
[-a/2 , a/2] \times \{ x_{0} \} \times K 
= 
h([-a/2 , a/2] \times \{ x_{0} \} \times K \times \{ 1 \})
$$ 
is included in the open set $\cU := h((-a , a) \times U)$ of $(-a , a) \times N$. 
Thus, there exists a number $\tau_{0} \in (0 , R/2)$ such that
$[-a/2 , a/2] \times \{ x_{0} \} \times \S \inc \cU$,
where $\S := \{ (1 + \tau) y \st y \in K \ \ \mbox{and} \ \ \tau \in (-\tau_{0} , \tau_{0}) \}$.

\medskip 

But Lemma~\ref{lem:rad-sym} implies that for any $(\l , (x , p)) \in \cU$ and $A \in \O(\Rn{2})$,
we have $(\l , (A(x) , A(p))) \in \cU$.
Hence, if 
$$
E := \{ (x , (1 + \tau) y) \st x , y \in \bD 
\ \ \mbox{and} \ \ 
|x - y| \geq \sqrt{3} R 
\ \ \mbox{and} \ \ 
\tau \in (-\tau_{0} , \tau_{0}) \},
$$
we get
\begin{equation} \label{equ:inclusion}
   [-a/2 , a/2] \times E \ 
   =
   \!\!\! \bigcup_{A \in \O(\Rn{2})} \!\!\! [-a/2 , a/2] \times \{ A(x_{0}) \} \times A(\S) \inc \cU.
\end{equation}

\medskip 

Now define $R_{0} := R - \tau_{0} \in (R/2 , R)$, 
and pick $\l \in [-a/2 , a/2]$, $x \in \bD$ and $p \in \cD \setmin \bar{\cD(R_{0})}$. 

\smallskip 

Let $z \in \bD$ be the intersection point between $\bD$ and the open half line $x + \RR_{+}^{*} (p - x)$.

\medskip 

If $|x - z| \geq \sqrt{3} R$, then $(x , p) \in E$, 
and thus $(\l , (x , p)) \in \cU$ by Equation~\ref{equ:inclusion}.
This means that there are $y \in \bD$ and $t \in (3/4 , +\infty)$ satisfying $p = \f_{x}^{\l}(y , t)$,
which implies that $\s_{\l}(x , p) = y$.
But, since $\r \equiv 0$ on $(3/4 , +\infty)$ by property~(2) in Equation~\ref{equ:rho}, we also
have $p = \f_{x}^{\l}(y , t) = G_{x}^{0}(y , t) = \f_{x}^{0}(y , t)$,
and hence $\s_{0}(x , p) = y$.

\medskip 

If $|x - z| < \sqrt{3} R$, then the image of the
$g_{0}$-geodesic $G_{x}^{0}(z , \cdot) : \RR \to \Hn{2}$ lies in the open set
$\Hn{2} \setmin \bar{\cD(R/2)}$ of $\Hn{2}$, since this image is equal to the
intersection of the straight line $(xz)$ with $\Hn{2}$ and since 
any chord of $\bD$ that is tangent to
$\bD(R/2)$ has a Euclidean length equal to $\sqrt{3} R$.
Since the Riemannian metrics $g_{\l}$ and $g_{0}$ coincide on $\Hn{2} \setmin \bar{\cD(R/2)}$, we
get that the $g_{\l}$-geodesic $G_{x}^{\l}(z , \cdot) : \RR \to \Hn{2}$ is actually
equal to the $g_{0}$-geodesic $G_{x}^{0}(z , \cdot) : \RR \to \Hn{2}$.
Thus, $\f_{x}^{\l}(z , t) = G_{x}^{0}(z , t) = \f_{x}^{0}(y , t)$ for all $t \in \RR$.
But the definition of $z$ says that $p \in (xz)$, 
which means there is $t_{0} \in \RR$ such that $p = G_{x}^{0}(z , t_{0})$. 
So, $p = \f_{x}^{\l}(z , t_{0}) = \f_{x}^{0}(z , t_{0})$,
and therefore $\s_{\l}(x , p) = z = \s_{0}(x , p)$.

\medskip 

Conclusion: for every $\l \in [-a/2 , a/2]$, $x \in \bD$ and $p \in \cD \setmin \bar{\cD(R_{0})}$, we have
$\s_{\l}(x , p) = \s_{0}(x , p)$.

\medskip 

Hence, for any $\l \in (-\e_{0} , \e_{0}) \inc [-a/2 , a/2]$, $p \in \cD \setmin \bar{\cD(R_{0})}$ 
and $v \in \Rn{2}$, we can write
\begin{eqnarray*}
   \sF_{\l}(p , v) 
   & = &
   \frac{1}{4} \! \int{\bD}{}{\ \ppder{d_{g_{0}}}{x}{y}(x , \s_{\l}(x , p)) \! 
   \left| \pder{\s_{\l}}{p}(x , p) \act v \right| \!\!}{x} \\
   & = &
   \frac{1}{4} \! \int{\bD}{}{\ \ppder{d_{g_{0}}}{x}{y}(x , \s_{0}(x , p)) \! 
   \left| \pder{\s_{0}}{p}(x , p) \act v \right| \!\!}{x} \\
   & = &
   \sF_{0}(p , v) = F_{0}(p , v).
\end{eqnarray*} 

\smallskip 

This proves Proposition~\ref{prop:hyp}.
\end{proof}

\bigskip

From now on, for each $\l \in (-\e_{0} , \e_{0})$ and thanks to Proposition~\ref{prop:hyp}, 
we extend the Finsler metric $\sF_{\l}$ on the whole $\Hn{2}$ 
by setting $\sF_{\l}(p , v) = F_{0}(p , v)$ 
for all $(p , v) \in (\Hn{2} \setmin \bar{\cD(R_{0})}) \times \Rn{2}$. 

\medskip

Then we have

\begin{proposition} \label{prop:properties}
   The family of Finsler metrics $(\sF_{\l})_{\l \in (-\e_{0} , \e_{0})}$ on $\Hn{2}$ 
   satisfies the following:
    
   \begin{enumerate}
      \item the function $\F : (-\e_{0} , \e_{0}) \times T\Hn{2} \to \RR$ 
      defined by $\F(\l , \cdot) := \sF_{\l}(\cdot)$ for all $\l \in (-\e_{0} , \e_{0})$ is continuous 
      and $\Cl{\infty}$ on $(-\e_{0} , \e_{0}) \times \Hn{2} \times (\Rn{2} \setmin \{ 0 \})$; and 
      
      \smallskip 

      \item there exists $\e \in (0 , \e_{0})$ such that for each $\l \in (-\e , \e)$, 
      the smooth Finsler metric $\sF_{\l}$ is strongly convex and has no conjugate points. 
   \end{enumerate} 
\end{proposition}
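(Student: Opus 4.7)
The plan is to handle the smoothness assertion in part~(1) first and then to deduce strong convexity and absence of conjugate points in part~(2) by a continuity/perturbation argument from the hyperbolic case $\l = 0$.

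For part~(1), I would cover $(-\e_{0} , \e_{0}) \times T\Hn{2}$ by the two open pieces $(-\e_{0} , \e_{0}) \times (\Hn{2} \setmin \bar{\cD(R_{0})}) \times \Rn{2}$ and $(-\e_{0} , \e_{0}) \times \cD \times \Rn{2}$. On the first, $\F(\l , (p , v)) = F_{0}(p , v)$ is trivially $\Cl{\infty}$, and on the overlap the two formulas agree by Proposition~\ref{prop:hyp}, so only joint smoothness on the second piece must be proved. Adapting Remark~\ref{rem:smooth} with an extra parameter, I would study
\[
   \Ups(\l , (p , v) , x)
   :=
   \frac{1}{4} \, \ppder{d_{g_{0}}}{x}{y}(x , \s_{\l}(x , p)) \, \pder{\s_{\l}}{p}(x , p) \act v ,
\]
which is $\Cl{\infty}$ on $(-\e_{0} , \e_{0}) \times \cD \times \Rn{2} \times \bD$ thanks to Proposition~\ref{prop:end-point} and the fact that $d_{g_{0}}$ is smooth off the diagonal. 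Differentiation under the integral sign then proceeds exactly as in Remark~\ref{rem:smooth}: on a compact neighborhood of any $(\l_{0} , p_{0} , v_{0})$ with $v_{0} \neq 0$, all partial derivatives of $|\Ups|$ in $(\l , p , v)$ are dominated by continuous (hence integrable) functions of $x$, so Lebesgue's dominated convergence theorem, together with the fact that $\{ e^{-}(p , v) , e^{+}(p , v) \}$ has zero $\mathrm{d} x$-measure on $\bD$, yields inductively the differentiability of $\F$ to all orders.

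For strong convexity I note that $\sF_{0} = F_{0}$ has fundamental tensor $2 g_{0}(p)$, which is everywhere positive definite; strong convexity of $\sF_{\l}$ is automatic outside $\bar{\cD(R_{0})}$ where $\sF_{\l} = F_{0}$, and by $2$-homogeneity in $v$ it suffices to verify positive definiteness of the Hessian of $(\sF_{\l})^{2}$ on the compact set $\bar{\cD(R_{0})} \times \Sn{1}$. By part~(1) this Hessian is continuous in $(\l , p , v)$, so its smallest eigenvalue, strictly positive at $\l = 0$, remains positive for $\l$ in some interval $(-\e_{1} , \e_{1}) \subset (-\e_{0} , \e_{0})$ by a standard compactness argument.

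For the absence of conjugate points I would use the flag curvature. On a smooth strongly convex Finsler surface, a sufficient condition for absence of conjugate points is that the flag curvature $K_{\sF_{\l}}$ be everywhere negative, since then the scalar Jacobi equation $y'' + K_{\sF_{\l}} y = 0$ along any geodesic admits no solution vanishing twice. Outside $\bar{\cD(R_{0})}$ the metric is hyperbolic and $K_{\sF_{\l}} = -1$. On the unit tangent bundle over $\bar{\cD(R_{0})}$, which is compact by the strong convexity just established, the flag curvature is a continuous function of $(\l , p , v)$, via part~(1), and equals $-1$ at $\l = 0$; a second compactness argument yields $\e_{2} \in (0 , \e_{1})$ so that $K_{\sF_{\l}} < 0$ everywhere for $|\l| < \e_{2}$. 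Setting $\e := \e_{2}$ then completes the proof. The principal obstacle is to ensure that the joint smoothness of part~(1) really propagates to continuity of the flag curvature in $\l$; this requires differentiability of $\F$ of sufficiently high order jointly in all variables, which is precisely why the dominated-convergence bookkeeping of Remark~\ref{rem:smooth} must be reproduced carefully with the additional parameter $\l$ rather than merely pointwise in $\l$.
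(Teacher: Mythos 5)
Your proof of part~(1) and of the strong convexity claim in part~(2) follows essentially the same route as the paper: the paper also introduces the parameter-dependent integrand
$\Ups((\l , p , v) , x) = \frac{1}{4} \ppder{d_{g_{0}}}{x}{y}(x , \s_{\l}(x , p)) \pder{\s_{\l}}{p}(x , p) \act v$,
invokes Proposition~\ref{prop:end-point} for its joint smoothness, repeats the dominated-convergence bookkeeping of Remark~\ref{rem:smooth}, and glues with the region where $\sF_{\l} = F_{0}$; for strong convexity it likewise perturbs off $\l = 0$ using continuity of $\frac{\partial^{2} \sF_{\l}^{2}}{\partial v^{2}}$ in $\l$ (your explicit reduction to the compact set $\bar{\cD(R_{0})} \times \Sn{1}$ via $2$-homogeneity is in fact a welcome sharpening of the paper's somewhat loose appeal to the $\Cl{2}$-topology on the non-compact space $T\Hn{2} \setmin \{ 0 \}$). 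Where you genuinely diverge is the absence of conjugate points. The paper argues via the geodesic flows $\f_{\l}$: it asserts continuity of $\l \mapsto \f_{\l}$ in the $\Cl{1}$-topology and then claims that the transversality of the vertical bundle $\cV$ with its images under $\f_{\l}^{-t}$, valid for $F_{0}$, persists for all $(t , (p , v)) \in \RR \times (T\Hn{2} \setmin \{ 0 \})$ when $\l$ is small --- a perturbation statement over the \emph{non-compact} time axis that the paper does not really justify (it implicitly relies on the uniform hyperbolicity of the hyperbolic geodesic flow). Your argument instead shows that the flag curvature, which is a continuous function of $(\l , p , v)$ by the joint $\Cl{\infty}$ regularity of part~(1), equals $-1$ outside $\bar{\cD(R_{0})}$ and at $\l = 0$, hence stays negative on the compact set $\bar{\cD(R_{0})} \times \Sn{1}$ for small $\l$; negative flag curvature then excludes conjugate points through the scalar Jacobi equation. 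This replaces an open condition quantified over all of $\RR$ by a pointwise open condition checked on a compact set, and is, if anything, the more robust of the two arguments; its only cost is the need for enough joint regularity to make the flag curvature continuous in $\l$, which you correctly flag and which part~(1) supplies.
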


\medskip

\begin{proof}~\\ 
\indent $\bullet$ \textsf{Point~(1):}   
Consider the map $\Ups : (-\e_{0} , \e_{0}) \times \Hn{2} \times \Rn{2} \times \bD \to \Rn{2}$  
defined by 
$$
\Ups((\l , p , v) , x) 
:= 
\frac{1}{4} \ppder{d_{g_{0}}}{x}{y}(x , \s_{\l}(x , p)) \pder{\s_{\l}}{p}(x , p) \act v.
$$

\smallskip 

Since $d_{g_{0}}$ is $\Cl{\infty}$ on $(\bD \times \bD) \setmin \{ (x , x) \st x \in \bD \}$ and 
$\disp (\l , x , p) \mapsto \s_{\l}(x , p)$ is a $\Cl{\infty}$ map 
from $(-\e_{0} , \e_{0}) \times \bD \times \cD$ to $\bD$ 
by Proposition~\ref{prop:end-point} 
which satisfies $\s_{\l}(x , p) \neq x$ for all $\l \in (-\e_{0} , \e_{0})$ 
and $(x , p) \in \bD \times \cD$, 
the positive function $\disp (\l , x , p) \mapsto \ppder{d_{g_{0}}}{x}{y}(x , \s_{\l}(x , p))$ 
is $\Cl{\infty}$ on $(-\e_{0} , \e_{0}) \times \bD \times \cD$, 
and therefore $\Ups$ is $\Cl{\infty}$ on $(-\e_{0} , \e_{0}) \times \cD \times \Rn{2} \times \bD$. 

\smallskip 

Then, using the same arguments as in Remark~\ref{rem:smooth}, we get that 
the function $\F$ is continuous on $(-\e_{0} , \e_{0}) \times \cD \times \Rn{2}$ 
and $\Cl{\infty}$ on $(-\e_{0} , \e_{0}) \times \cD \times (\Rn{2} \setmin \{ 0 \})$ 
since we have 
$$
\F(\l , (p , v)) = \int{\bD}{}{|\Ups((\l , p , v) , x)| \!}{x} 
$$
for all $\l \in (-\e_{0} , \e_{0})$ and $(p , v) \in T\Hn{2} = \Hn{2} \times \Rn{2}$. 

\smallskip 

On the other hand, since $\F(\l , (p , v)) = F_{0}(p , v)$ 
for all $\l \in (-\e_{0} , \e_{0})$ and $(p , v) \in (\Hn{2} \setmin \bar{\cD(R_{0})}) \times \Rn{2}$
by construction, the function $\F$ is continuous 
on $(-\e_{0} , \e_{0}) \times (\Hn{2} \setmin \bar{\cD(R_{0})}) \times \Rn{2}$ and $\Cl{\infty}$ 
on $(-\e_{0} , \e_{0}) \times (\Hn{2} \setmin \bar{\cD(R_{0})}) \times (\Rn{2} \setmin \{ 0 \})$. 

\smallskip

Conclusion: $\F$ is a continuous function that is $\Cl{\infty}$ 
on $(-\e_{0} , \e_{0}) \times \Hn{2} \times (\Rn{2} \setmin \{ 0 \})$. 

\medskip

$\bullet$ \textsf{Point~(2):}  
As a consequence of the first point, the map 
$\l \mapsto \F(\l , \cdot) = \sF_{\l}(\cdot)$ from $(-\e_{0} , \e_{0})$ 
to $\Cl{2}(T\Hn{2} \setmin \{ 0 \} , \RR)$ 
is continuous when $\Cl{2}(T\Hn{2} \setmin \{ 0 \} , \RR)$ is endowed with the $\Cl{2}$-topology. 

\smallskip 

This first implies that $\frac{\partial^{2} \sF_{\l}^{2}}{\partial v^{2}}$ is close to 
$\frac{\partial^{2} F_{0}^{2}}{\partial v^{2}}$ in $\Cl{0}(T\Hn{2} \setmin \{ 0 \} , \RR)$ 
with respect to the $\Cl{0}$-topology whenever $\l \in (-\e_{0} , \e_{0})$ is sufficiently small. 
Hence, there exists $\e_{1} \in (0 , \e_{0})$ such that $\sF_{\l}$ is strongly convex 
for all $\l \in (-\e_{1} , \e_{1})$ since the hyperbolic Finsler metric $F_{0}$ is. 

\smallskip 

Furthermore, if $\cV \inc T(T\Hn{2})$ is the vertical vector bundle over $T\Hn{2}$ 
(the kernel of the differential of the natural projection $T\Hn{2} \to \Hn{2}$) 
and $\f_{\l} = \flowR{\f_{\l}}{t}$ is the geodesic flow of $\sF_{\l}$ on $T\Hn{2} \setmin \{ 0 \}$ 
for any $\l \in (-\e_{1} , \e_{1})$ (\emph{i.e.}, the Euler-Lagrange flow of the non-degenerate 
Lagrangian $\cL_{\l} := \frac{1}{2} \sF_{\l}^{2} : T\Hn{2} \setmin \{ 0 \} \to \RR$), 
the map $\l \mapsto \f_{\l}$ from $(-\e_{1} , \e_{1})$ 
to $\Cl{1}(\RR \times (T\Hn{2} \setmin \{ 0 \}) , T\Hn{2} \setmin \{ 0 \})$ 
is continuous when $\Cl{1}(\RR \times (T\Hn{2} \setmin \{ 0 \}) , T\Hn{2} \setmin \{ 0 \})$ 
is endowed with the $\Cl{1}$-topology. 

\smallskip 

Since the hyperbolic Finsler metric $F_{0}$ has no conjugate points, we have 
$$
\cV_{(p , v)} \cap T_{\f_{0}^{t}(p , v)} \f_{0}^{-t} (\cV_{\f_{0}^{t}(p , v)}) \neq \{ 0 \} 
\ \ \mbox{for all} \ \ (t , (p , v)) \in \RR \times (T\Hn{2} \setmin \{ 0 \}). 
$$ 
Thus, there exists $\e \in (0 , \e_{1})$ such that 
$$
\cV_{(p , v)} \cap T_{\f_{\l}^{t}(p , v)} \f_{\l}^{-t} (\cV_{\f_{\l}^{t}(p , v)}) \neq \{ 0 \} 
\ \ \mbox{for all} \ \ (t , (p , v)) \in \RR \times (T\Hn{2} \setmin \{ 0 \}) 
\ \ \mbox{and all} \ \ \l \in (-\e , \e). 
$$
But this is equivalent to saying that the Finsler metric $\sF_{\l}$ has no conjugate points 
whenever $\l \in (-\e , \e)$. 
\end{proof}

\bigskip

\begin{proposition} \label{prop:non-Riemann}
   For any $\l \in (-\e , \e)$, the Finsler metric $\sF_{\l}$ is \emph{not Riemannian} 
   whenever $\l \neq 0$.    
\end{proposition}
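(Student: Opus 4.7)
The plan is to argue by contradiction. Suppose there exists a smooth Riemannian metric $\tilde{g}_\l$ on $\Hn{2}$ with $\sF_\l(p , v) = \sqrt{\tilde{g}_\l(p)(v , v)}$ for all $(p , v) \in T\Hn{2}$. By Proposition~\ref{prop:hyp}, $\tilde{g}_\l = g_0$ on $\Hn{2} \setmin \bar{\cD(R_0)}$, and by Lemma~\ref{lem:rad-sym}, $\tilde{g}_\l$ is $\O(\Rn{2})$-invariant. By Arcostanzo's Theorem~\ref{thm:Arcostanzo}, the $\sF_\l$-geodesics in $\cD$ --- equivalently, the $\tilde{g}_\l$-geodesics --- are exactly the curves $\f_x^\l(y , \cdot)$ of $\gS_\l$.

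The crucial observation is that on $[1/3 , 2/3]$ the function $\r$ is identically $1$ by properties~(1) and~(3) of Equation~\ref{equ:rho}, so $\f_x^\l(y , t) = G_x^\l(y , t)$ there and these portions are $g_\l$-geodesic arcs. By rotational symmetry, at the origin every direction is realized by some diameter $\f_x^\l(-x , \cdot)$ at parameter $t = 1/2 \in (1/3 , 2/3)$; combined with Proposition~\ref{prop:p-v} and continuity, there is an open neighborhood $W$ of the origin in $\Hn{2}$ such that for each $(p , v) \in TW \setmin \{ 0 \}$ the corresponding parameter $t$ lies in $(1/3 , 2/3)$. Consequently $\tilde{g}_\l$-geodesics coincide with $g_\l$-geodesics as unparametrized curves on $W$, so $\tilde{g}_\l$ and $g_\l$ are projectively equivalent on $W$. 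Since both metrics are rotation-invariant, writing them in polar coordinates as $A(r) \, dr^2 + B(r) \, d\theta^2$ and analyzing Clairaut's integral yields the classical Levi-Civita--Dini relation $\tilde{B} = K B_\l / (1 + c_0 B_\l)$, $\tilde{A} = K A_\l / (1 + c_0 B_\l)^2$ for some constants $K > 0$ and $c_0 \in \RR$; matching with $g_0$ on the annulus $\cD(R_0) \setmin \bar{\cD(R/2)}$ (where $g_\l = g_0$ by construction of $f$, and $\tilde{g}_\l = g_0$ by Proposition~\ref{prop:hyp}) pins down $K = 1$ and $c_0 = 0$, so $\tilde{g}_\l = g_\l$ on $W$.

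The contradiction comes from juxtaposing $\tilde{g}_\l = g_\l$ on $W$ with the fact that the $\tilde{g}_\l$-geodesics are globally the interpolated curves $\f_x^\l(y , \cdot)$. Choosing $(x , y) \in (\bD \times \bD) \setmin \D$ such that the curve $\f_x^\l(y , \cdot)$ meets $W$ at parameters both inside and outside $(1/3 , 2/3)$ produces a geodesic arc that is simultaneously a $g_\l$-geodesic (inside $W$, by $\tilde{g}_\l = g_\l$ there) and, on the portion with $t \notin [1/3 , 2/3]$, a strict convex combination of the $g_0$-geodesic $G_x^0(y , \cdot)$ and the $g_\l$-geodesic $G_x^\l(y , \cdot)$; for generic non-radial chords such an interpolation is not a $g_\l$-geodesic, since the gradient of the conformal factor $f$ is radial by $\O(\Rn{2})$-invariance and hence transverse to the chord direction in its interior. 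The main obstacle will be carrying out the Levi-Civita--Dini classification in the rotationally symmetric setting cleanly, and then exhibiting such a chord --- either by enlarging $W$ via iteration of the projective-equivalence argument along the full $(1/3 , 2/3)$ parameter range, or by a direct geometric description of where the interpolation regions of the $\f_x^\l(y , \cdot)$ lie.
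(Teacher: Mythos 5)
Your overall strategy---derive a contradiction from the fact that near the origin the $\sF_{\l}$-geodesics are $g_{\l}$-geodesics while $g_{\l}$ has non-constant curvature there---is the right instinct, and your construction of the neighborhood $W$ is essentially the paper's Lemma~\ref{lem:geodesics}. But the mechanism you use to convert the hypothesis ``$\sF_{\l}$ is Riemannian'' into a usable constraint has two genuine gaps. First, the Levi-Civita--Dini step: the projective equivalence of $\tilde{g}_{\l}$ and $g_{\l}$ is established only on $W$, a small neighborhood of the origin, so the constants $K$ and $c_{0}$ in the Dini normal form live there; you propose to pin them down by ``matching with $g_{0}$'' on the annulus $\cD(R_{0}) \setmin \bar{\cD(R/2)}$, but Proposition~\ref{prop:hyp} gives $\tilde{g}_{\l} = g_{0}$ only \emph{outside} $\bar{\cD(R_{0})}$, not on that annulus, and in any case no projective equivalence has been established on any region connecting $W$ to the boundary annulus, so there is nothing to match. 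Second, the final contradiction: you need a curve $\f_{x}^{\l}(y , \cdot)$ meeting $W$ at a parameter outside $(1/3 , 2/3)$, but the very containment property you used to build $W$ (Step~3 of Lemma~\ref{lem:geodesics}: any parameter at which such a curve lies in $\cD(r_{0})$ belongs to $(1/3 , 2/3)$) shows that no such chord exists when $W \inc \cD(r_{0})$; the proposed fixes (iterating the projective-equivalence argument, enlarging $W$) are not carried out and are not obviously available, since outside the $\r \equiv 1$ region the curves are no longer $g_{\l}$-geodesics and the projective equivalence breaks down.

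The missing ingredient is boundary rigidity. The paper's proof is much shorter: if $\sF_{\l}$ were Riemannian, then since $d_{\sF_{\l}} = d_{g_{0}}$ on $\bD$, Theorem~\ref{thm:Riemann-boundary-rigid} forces $(\cD , \sF_{\l})$ to be isometric to $(\cD , F_{0})$, i.e.\ to the Klein model, so \emph{all} $\sF_{\l}$-geodesics in $\cD$ are carried by a single chart onto straight lines. By Lemma~\ref{lem:geodesics} the $g_{\l}$-geodesics in $\cD(r_{0})$ are among them, so Beltrami's theorem forces $g_{\l}$ to have constant curvature on $\cD(r_{0})$, contradicting point~(3) of Proposition~\ref{prop:conformal}. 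This bypasses entirely the need to identify $\tilde{g}_{\l}$ with $g_{\l}$ or to exhibit a special chord. As written, your argument does not close; to salvage it along your lines you would at minimum have to replace the constant-matching step by an actual rigidity input such as Theorem~\ref{thm:Riemann-boundary-rigid}.
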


\medskip

Before proving this result, we will need to establish the following:

\begin{lemma} \label{lem:geodesics}
   There exists $r_{0} \in (0 , R/2)$ such that for every $\l \in (-\e , \e)$, 
   all the geodesics of the restriction of the Riemannian metric $g_{\l}$ to $\cD(r_{0})$
   are geodesics for $\sF_{\l}$.
\end{lemma}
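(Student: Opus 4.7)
The plan rests on the observation that property~(3) for $\r$ in Equation~\ref{equ:rho}, combined with $\r \equiv 1$ on $[1/2 , 2/3]$, actually gives $\r \equiv 1$ on the enlarged interval $[1/3 , 2/3]$: indeed, for $t \in [1/3 , 1/2]$ one has $1 - t \in [1/2 , 2/3]$, so $\r(t) = \r(1 - t) = 1$. Consequently
\begin{equation*}
\f_{x}^{\l}(y , t) = G_{x}^{\l}(y , t) \quad \mbox{for every} \ (x , y) \in (\bD \times \bD) \setmin \D \ \mbox{and every} \ t \in [1/3 , 2/3],
\end{equation*}
so that on this parameter range the Arcostanzo curve coincides (as a parametrized curve) with the $g_{\l}$-geodesic from $x$ to $y$.

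Let $\c$ be a $g_{\l}$-geodesic whose image lies in $\cD(r_{0})$. By completeness of $g_{\l}$ (Proposition~\ref{prop:conformal}) and since $g_{\l}$ agrees with $g_{0}$ outside $\cD(R/2)$, the maximal extension $\tilde{\c} : \RR \to \Hn{2}$ of $\c$ becomes a $g_{0}$-geodesic, hence an affine line, outside $\cD(R/2)$; therefore $\tilde{\c}$ crosses the Euclidean circle $\bD$ transversely at two distinct points $x , y \in \bD$, and after an affine reparametrization one has $\tilde{\c}(t) = G_{x}^{\l}(y , t)$ for $t \in [0 , 1]$, with $\c$ corresponding to a sub-interval of $\{ t \in [0 , 1] : G_{x}^{\l}(y , t) \in \cD(r_{0}) \}$. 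The lemma is thus reduced to choosing $r_{0}$ so that, for every $\l \in (-\e , \e)$, this set is always contained in $(1/3 , 2/3)$: for then $\c$ is a restriction of $\f_{x}^{\l}(y , \cdot)|_{(0 , 1)}$, which by Theorem~\ref{thm:Arcostanzo} and Theorem~\ref{thm:admissible} is a maximal $\sF_{\l}$-geodesic (up to reparametrization).

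To produce such an $r_{0}$, introduce the function
\begin{equation*}
h(\l) := \min\!\left\{ |G_{x}^{\l}(y , t)| \st (x , y , t) \in \bD \times \bD \times ([0 , 1/3] \cup [2/3 , 1]) \right\},
\end{equation*}
with the convention $G_{x}^{\l}(x , t) := x$ on the diagonal. The map $(\l , x , y , t) \mapsto G_{x}^{\l}(y , t)$ is $\Cl{\infty}$ off $\{ x = y \}$ and extends continuously across the diagonal (since $G_{x}^{\l}(y , t) \to x$ as $y \to x$), so $h$ is continuous on $(-\e_{0} , \e_{0})$ by compactness of its domain. A direct computation using $G_{x}^{0}(y , t) = (1-t) x + t y$ and $|x| = |y| = R$ yields
\begin{equation*}
|G_{x}^{0}(y , t)|^{2} \geq R^{2} (1 - 2t)^{2},
\end{equation*}
with equality when $y = -x$, hence $h(0) = R/3$, attained at antipodal chord endpoints with $t \in \{ 1/3 , 2/3 \}$. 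By continuity of $h$, one may therefore fix $r_{0} \in (0 , R/3) \subseteq (0 , R/2)$ and shrink $\e > 0$ so that $h(\l) > r_{0}$ for every $\l \in (-\e , \e)$.

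The step that demands the most care is the computation of $h(0)$ and the continuity of $h$ at $0$: one must handle the limiting behaviour of $G_{x}^{\l}(y , t)$ as $(x , y)$ approaches the diagonal (where it degenerates to the constant curve $t \mapsto x$ of Euclidean norm $R > R/3$, so it poses no difficulty) and confirm that the extremum of $|G_{x}^{0}(y , t)|$ over the compact set $\bD \times \bD \times ([0 , 1/3] \cup [2/3 , 1])$ is actually attained at antipodal endpoints with $t \in \{ 1/3 , 2/3 \}$, which is a short elementary minimization.
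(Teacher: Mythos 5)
Your overall strategy is a legitimate alternative to the paper's: instead of using the symmetry identity $\f_{x}^{\l}(-x , 1/2) = 0$ together with the openness of the map $h(\l , (x,y,t)) = (\l , (x , \f_{x}^{\l}(y,t)))$ to sweep out a small disk $\cD(r_{0})$ by curve points with parameter in $(1/3 , 2/3)$, you extend a given $g_{\l}$-geodesic out to $\bD$ and seek a uniform lower bound on $|G_{x}^{\l}(y,t)|$ for $t \in [0 , 1/3] \cup [2/3 , 1]$. The reduction itself is sound (modulo the routine verifications that a complete $g_{\l}$-geodesic does cross $\bD$ and that $G_{x}^{\l}(y , t) \notin \cD$ for $t \notin (0,1)$, which you only sketch), and it is the same in spirit as the paper's Step~3.

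However, the step you yourself flag as the delicate one contains a genuine error: you compute $h(0)$ using $G_{x}^{0}(y , t) = (1-t) x + t y$. This is false. In the Klein model the \emph{image} of a $g_{0}$-geodesic is an affine segment, but $G_{x}^{0}(y , \cdot)$ is by definition the constant $g_{0}$-speed parametrization, which is \emph{not} affine; this is precisely why the paper writes $G_{x}^{0}(y , t) = x + \om(x , y , t)(y - x)$ with a non-affine $\om$ in Step~1 of the proof of Proposition~\ref{prop:diffeo}. For the antipodal chord one finds $|G_{x}^{0}(-x , 1/3)| = \tanh(\tfrac{1}{3}\operatorname{artanh} R)$, not $R/3$, so the identity $h(0) = R/3$ and the inequality $|G_{x}^{0}(y,t)|^{2} \geq R^{2}(1-2t)^{2}$ "with equality at $y = -x$" are not established by your computation. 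The gap is reparable: all you actually need is $h(0) > 0$, which follows from compactness once one checks (e.g.\ via the $\O(\Rn{2})$-invariance, as in the paper's Step~1) that $G_{x}^{0}(y , t) = 0$ forces $y = -x$ and $t = 1/2 \notin [0,1/3] \cup [2/3,1]$; then take $r_{0} < h(0)$ and argue by continuity. Note also that your continuity-at-$0$ argument obliges you to shrink $\e$, whereas the paper's $r_{0}$ works for the $\e$ already fixed in Proposition~\ref{prop:properties}; this is harmless for the Main Theorem but is a deviation from the lemma as stated.
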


\medskip

\begin{proof}~\\ 
\indent It will consists in four technical steps. 

\smallskip 

We first show that for any $\l \in \RR$ and $x \in \bD$, the parameterized curve 
$\f_{x}^{\l}(-x , \cdot) : \RR \to \Rn{2}$ passes through the origin $0$ at $t = 1 / 2$ 
(here $y = -x \in \bD$ is the symmetric of $x$ about $0$). 
Then, remembering that 
$\f_{x}^{\l}(y , \cdot) = (1 - \r) G_{x}^{0}(y , \cdot) + \r G_{x}^{\l}(y , \cdot)$ for all $y \in \bD$ 
and using the fact that $\r \equiv 1$ on $[1 / 3 , 2 / 3]$, 
we deduce the lemma. 

\medskip

$\bullet$ \textsf{Step~1:}  
Fix arbitrary $\l \in \RR$ and $x \in \Hn{2} \setmin \{ 0 \}$.

\smallskip 

If $A \in \O(\Rn{2})$ is the Euclidean reflection 
through the line $x^{\perp}$, we have 
$$
\f_{x}^{\l}(-x , 1/2) = \f_{A(-x)}^{\l}(A(x) , 1/2) = A(\f_{-x}^{\l}(x , 1/2))
$$
on the one hand by Equation~\ref{equ:rad-sym}, and $\f_{-x}^{\l}(x , 1/2) = \f_{x}^{\l}(-x , 1/2)$ 
on the other hand by the first point in Lemma~\ref{lem:basic-1}.
Therefore $\f_{x}^{\l}(-x , 1/2) \in x^{\perp}$.

\smallskip 

Next, if $B \in \O(\Rn{2})$ is the reflection 
through the line $\RR x$, we have 
$$
\f_{x}^{\l}(-x , 1/2) = \f_{B(x)}^{\l}(B(-x) , 1/2) = B(\f_{x}^{\l}(-x , 1/2))
$$
by Equation~\ref{equ:rad-sym}, and hence $\f_{-x}^{\l}(x , 1/2) \in \RR x$.

\smallskip 

This shows that $\f_{x}^{\l}(-x , 1/2) = 0$.

\medskip

$\bullet$ \textsf{Step~2:}  
Now, as in the proof of Proposition~\ref{prop:hyp}, we will make use of the map 
$h : (-a , a) \times M \to (-a , a) \times N$ defined by
$h(\l , (x , y , t)) = (\l , (x , \f_{x}^{\l}(y , t)))$. 

\smallskip 

Consider the open set $V := ((\bD \times \bD) \setmin \D) \times (1 / 3 , 2 / 3)$ in $M$, 
and define the compact set $L := \{ (x , -x) \st x \in \bD \} \inc \bD \times \bD$. 

\smallskip 

Since $L \times \{ 1 / 2 \} \inc V$, the set 
$h([-a / 2 , a / 2] \times L \times \{ 1 / 2 \})$ 
is included in the open set $\cV := h((-a , a) \times V)$ of $(-a , a) \times N$. 
But Step~1 implies that we have 
$[-a / 2 , a / 2] \times \bD \times \{ 0 \} \inc h([-a / 2 , a / 2] \times L \times \{ 1 / 2 \})$. 
So, using the compactness of $[-a / 2 , a / 2] \times \bD \times \{ 0 \}$, 
there exists a number $r_{0} \in (0 , R/2)$ such that 
$[-a / 2 , a / 2] \times \bD \times \cD(r_{0}) \inc \cV$,
which means that for every $\l \in [-a / 2 , a / 2]$, $x \in \bD$ and $p \in \cD(r_{0})$,
there are $z \in \bD$ and $\tau \in (1 / 3 , 2 / 3)$ satisfying $p = \f_{x}^{\l}(z , \tau)$.

\medskip

$\bullet$ \textsf{Step~3:} 
For any $\l \in (-\e , \e) \inc [-a / 2 , a / 2]$ and $x , y \in \bD$ with $x \neq y$, 
we have $\{ t \in (0 , 1) \st \f_{x}^{\l}(y , t) \in \cD(r_{0}) \} \inc (1 / 3 , 2 / 3)$.

\smallskip 

Indeed, if $t \in (0 , 1)$ satisfies $p := \f_{x}^{\l}(y , t) \in \cD(r_{0})$,
then by Step~2 there exist $z \in \bD$ and $\tau \in (1 / 3 , 2 / 3)$ 
such that $\f_{x}^{\l}(y , t) = \f_{x}^{\l}(z , \tau)$.
So $\F_{\l}(x , y , t) = \F_{\l}(x , z , \tau)$, and therefore $t = \tau \in (1 / 3 , 2 / 3)$ 
since $\F_{\l} : M \to N$ is injective by point~(2) in Proposition~\ref{prop:diffeo}.

\medskip

$\bullet$ \textsf{Step~4:}  
For this last step, fix $\l \in (-\e , \e)$, let $c : I \to \Hn{2}$ be a $g_{\l}$-geodesic 
such that $c(I) \inc \cD(r_{0})$, where $I \inc \RR$ is an interval, 
and prove that $c$ is also a $\sF_{\l}$-geodesic. 

\smallskip 

For doing this, choose arbitrary $s_{0} , s_{1} \in I$ with $s_{0} < s_{1}$, 
and define $p_{0} := c(s_{0})$ and $p_{1} := c(s_{1})$. 

By Proposition~\ref{prop:p-q}, there exists 
$(x , y , t_{0} , t_{1}) \in ((\bD \times \bD) \setmin \D) \times (0 , 1) \times (0 , 1)$
such that $p_{0} = \f_{x}^{\l}(y , t_{0})$ and $p_{1} = \f_{x}^{\l}(y , t_{1})$ with $t_{0} \leq t_{1}$. 

\smallskip 

Then, by Theorem~\ref{thm:Arcostanzo}, the parameterized curve 
$\k : [t_{0} , t_{1}] \inc (0 , 1) \to \Hn{2}$ defined by 
$\k(t) := \f_{x}^{\l}(y , t)$ is a $\sF_{\l}$-geodesic. 
This implies that the reparametrized curve $\a : [s_{0} , s_{1}] \to \Hn{2}$ defined by 
$$
\a(s) := \k(t_{0} + (s - s_{0}) (t_{1} - t_{0}) / (s_{1} - s_{0}))
$$ 
is a $\sF_{\l}$-geodesic too. 

\smallskip 

Now, since $p_{0} = c(s_{0})$ and $p_{1} = c(s_{1})$ are in $\cD(r_{0})$, 
we have $t_{0} , t_{1} \in (1 / 3 , 2 / 3)$ by Step~3,
and hence $[t_{0} , t_{1}] \inc (1 / 3 , 2 / 3)$. 
But $\r \equiv 1$ on $[1 / 3 , 2 / 3]$ by properties~(1) and~(3) in Equation~\ref{equ:rho}, 
so we get 
\begin{equation} \label{equ:geodesics}
   \k(t) = G_{x}^{\l}(y , t) \ \ \mbox{for all} \ \ t \in [t_{0} , t_{1}].
\end{equation}

\smallskip 

This leads to considering the reparameterized curve $\bar{c} : [s_{0} , s_{1}] \to \Hn{2}$ 
defined by 
$$ 
\bar{c}(s) := G_{x}^{\l}(y , t_{0} + (s - s_{0}) (t_{1} - t_{0}) / (s_{1} - s_{0})) 
$$ 
which is a $g_{\l}$-geodesic that satisfies 
$$ 
\bar{c}(s_{0}) = G_{x}^{\l}(y , t_{0}) = \k(t_{0}) = p_{0} = c(s_{0}) 
\quad \mbox{and} \quad 
\bar{c}(s_{1}) = G_{x}^{\l}(y , t_{1}) = \k(t_{1}) = p_{0} = c(s_{1}) 
$$
by Equation~\ref{equ:geodesics}. 

\smallskip 

Thus, $\bar{c} = \rest{c}{[s_{0} , s_{1}]}$ since $g_{\l}$ has no conjugate points. 
This writes 
\begin{eqnarray*} 
   c(s) = \bar{c}(s) 
   & = & 
   G_{x}^{\l}(y , \underbrace{t_{0} 
   + (s - s_{0}) (t_{1} - t_{0}) / (s_{1} - s_{0})}_{\in [t_{0} , t_{1}]}) \\ 
   & = & 
   \k(t_{0} + (s - s_{0}) (t_{1} - t_{0}) / (s_{1} - s_{0})) \\ 
   & & 
   \mbox{(by Equation~\ref{equ:geodesics})} \\ 
   & = & \a(s) 
\end{eqnarray*}
for all $s \in [s_{0} , s_{1}]$. 
Hence $\rest{c}{[s_{0} , s_{1}]} = \a$, which shows that $\rest{c}{[s_{0} , s_{1}]}$ 
is a $\sF_{\l}$-geodesic (since $\a$ is). 

\smallskip 

As this holds for arbitrary $s_{0} , s_{1} \in I$ with $s_{0} < s_{1}$, 
we have proved that $c : I \to \Hn{2}$ is a $\sF_{\l}$-geodesic. 

\smallskip 

This establishes Lemma~\ref{lem:geodesics}. 
\end{proof}

\bigskip

\begin{proof}[Proof of Proposition~\ref{prop:non-Riemann}]~\\ 
\indent Let $\l \in (-\e , \e)$ with $\l \neq 0$. 

\smallskip 

As in~\cite{Arc94}, we use the following theorem of Beltrami to verify that within $\cD$ the
$\sF_{\l}$-geodesics do not arise as geodesics for a metric diffeomorphic to $g_{0}$:

\begin{theorem}[Beltrami. See~\cite{Spi75}, Chapter~7, page~26] 
   If $(X , g)$ is a connected Riemannian manifold such that for every point $p \in X$,
   there is a chart about $p$ that maps the $g$-geodesics onto straight lines, 
   then $(X , g)$ has constant sectional curvature.
\end{theorem}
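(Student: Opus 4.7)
The plan is to work locally, in a chart about a fixed point $p\in X$ in which geodesics of $g$ are straight line segments (up to reparametrization). A parametrized geodesic then has the form $\gamma(t)=q+\alpha(t)v$ for some scalar function $\alpha$ and fixed $v\in\Rn{n}$. Substituting this into the geodesic equation $\ddot\gamma^{\,k}+\Gamma^{k}_{ij}(\gamma)\dot\gamma^{\,i}\dot\gamma^{\,j}=0$ and comparing terms, one sees that for every $q$ and every $v$ the vector $\Gamma^{k}_{ij}(q)v^{i}v^{j}$ must be a scalar multiple of $v^{k}$.

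The next step is algebraic: a symmetric expression $\Gamma^{k}_{ij}v^{i}v^{j}$ that is parallel to $v$ for \emph{every} $v$ is forced, by a standard polarization argument, to take the form
\begin{equation*}
\Gamma^{k}_{ij}\;=\;\delta^{k}_{i}\,\psi_{j}+\delta^{k}_{j}\,\psi_{i}
\end{equation*}
for some smooth $1$-form $\psi=\psi_{i}\,\mathrm{d}x^{i}$ on the chart. In other words, the Levi-Civita connection is \emph{projectively flat}.

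I would then plug this form of the Christoffel symbols into the standard expression
$R^{l}{}_{kij}=\partial_{i}\Gamma^{l}_{jk}-\partial_{j}\Gamma^{l}_{ik}+\Gamma^{m}_{jk}\Gamma^{l}_{im}-\Gamma^{m}_{ik}\Gamma^{l}_{jm}$
for the Riemann tensor. A direct computation shows that it reduces to a combination of Kronecker deltas and the tensor $A_{ij}:=\partial_{i}\psi_{j}-\psi_{i}\psi_{j}$ (the trace of the projective Schouten-type tensor). Concretely, one finds $R^{l}{}_{kij}=\delta^{l}_{j}A_{ik}-\delta^{l}_{i}A_{jk}$. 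The crucial Riemannian input now enters: lowering indices with $g$ yields $R_{lkij}=g_{lj}A_{ik}-g_{li}A_{jk}$, and this must be antisymmetric in $(l,k)$ since $\Gamma$ is the Levi-Civita connection of $g$. Writing out $R_{lkij}+R_{klij}=0$ forces $A_{ij}$ to be proportional to the metric: $A_{ij}=-K\,g_{ij}$ for some scalar function $K$ on the chart. Substituting back gives the constant-curvature formula $R^{l}{}_{kij}=K(\delta^{l}_{i}g_{jk}-\delta^{l}_{j}g_{ik})$.

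Finally, the second Bianchi identity applied to this expression (standard Schur-type argument, valid in dimension $\geq 2$; in dimension $1$ there is nothing to prove and in dimension $2$ sectional curvature is a scalar to begin with) forces $K$ to be locally constant. Since the chart was arbitrary and $X$ is connected, $K$ is a global constant and $(X,g)$ has constant sectional curvature. The main obstacle is the tensor manipulation in the third paragraph: correctly identifying the combination $A_{ij}=\partial_{i}\psi_{j}-\psi_{i}\psi_{j}$ and exploiting the skew-symmetry of $R_{lkij}$ to extract the scalar factor $K$; this is where one uses in an essential way that $g$ is a genuine Riemannian metric (not merely that one has a symmetric affine connection), since projective flatness alone does not imply constant curvature without the metric structure.
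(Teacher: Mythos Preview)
The paper does not prove Beltrami's theorem; it is quoted as a classical result with a reference to Spivak and then used as a black box in the proof of Proposition~\ref{prop:non-Riemann}. So there is no ``paper's proof'' to compare against, and your outline should be judged on its own.

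Your argument is the standard projective-flatness proof and is essentially correct in dimension $n\geq 3$: from the straight-line hypothesis you obtain $\Gamma^{k}_{ij}=\delta^{k}_{i}\psi_{j}+\delta^{k}_{j}\psi_{i}$, compute $R^{l}{}_{kij}$ in terms of $A_{ij}=\partial_{i}\psi_{j}-\psi_{i}\psi_{j}$, use the antisymmetry $R_{lkij}=-R_{klij}$ to force $A_{ij}=-K g_{ij}$, and then apply a Schur argument. (One small omission: the formula $R^{l}{}_{kij}=\delta^{l}_{j}A_{ik}-\delta^{l}_{i}A_{jk}$ also requires $d\psi=0$, which itself follows from the metric antisymmetry of $R$; you should say this.)

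There is, however, a genuine gap in dimension $2$, and this is precisely the case the paper needs. Your parenthetical ``in dimension $2$ sectional curvature is a scalar to begin with'' is not an argument: \emph{every} Riemannian surface satisfies $R^{l}{}_{kij}=K(\delta^{l}_{i}g_{jk}-\delta^{l}_{j}g_{ik})$ with $K$ the Gaussian curvature, which is in general a nonconstant function. The contracted second Bianchi identity is vacuous for $n=2$ (the Einstein tensor vanishes identically), so Schur's lemma gives nothing. To close the gap you must go back to the explicit relations $A_{ij}=\partial_{i}\psi_{j}-\psi_{i}\psi_{j}=-K g_{ij}$ and $\partial_{k}g_{ij}=2\psi_{k}g_{ij}+\psi_{i}g_{kj}+\psi_{j}g_{ik}$ (the latter from $\nabla g=0$ with your $\Gamma$), differentiate the first and antisymmetrize in $i,k$; after cancellations one obtains $(\partial_{k}K)g_{ij}=(\partial_{i}K)g_{kj}$, whose trace gives $(n-1)\partial_{k}K=0$, hence $K$ constant for all $n\geq 2$. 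Without this extra integrability step your proof does not cover surfaces.
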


\medskip 

Now, if $\sF_{\l}$ were Riemannian, then by the boundary rigidity 
of $(\bar{\cD} , \rest{{g_{0}}}{\bar{\cD}})$
given in Theorem~\ref{thm:Riemann-boundary-rigid}, $\sF_{\l}$ would
be isometric to $F_{0}$ in restriction to $\cD$, which would imply that the $\sF_{\l}$-geodesics
within $\cD$ are diffeomorphically mapped onto straight lines in $\Rn{2}$.

\medskip 

In particular, this would be true for all the $\sF_{\l}$-geodesics
within the open set $\cD(r_{0})$ defined in Lemma~\ref{lem:geodesics}.
But this lemma says that every geodesic of the restriction of $g_{\l}$ to $\cD(r_{0})$
is a geodesic for $\sF_{\l}$, and therefore the $g_{\l}$-geodesics within $\cD(r_{0})$ 
would be diffeomorphically mapped onto straight lines in $\Rn{2}$.

\medskip 

Hence the curvature of $g_{\l}$ would be constant on $\cD(r_{0})$ by Beltrami's theorem,
which is impossible by point~$(3)$ in Propositon~\ref{prop:conformal}.
\end{proof}

\bigskip
\bigskip
\bigskip


\bibliographystyle{acm}
\bibliography{cnv-7}

\end{document}